\DeclareMathAlphabet{\mathpzc}{OT1}{pzc}{m}{en}
\newcommand{\dashint}{\,\ThisStyle{\ensurestackMath{%
			\stackinset{c}{.2\LMpt}{c}{.5\LMpt}{\SavedStyle-}{\SavedStyle\phantom{\int}}}%
		\setbox0=\hbox{$\SavedStyle\int\,$}\kern-\wd0}\int}
\DeclareMathOperator{\supp}{Supp}
\DeclareMathOperator{\Aut}{Aut}
\newcommand{\Aff}{\mathrm{Aff}}
\newcommand{\Hol}{\mathrm{Hol}}
\newcommand{\Supp}[1]{\supp\left( #1\right) }
\newcommand{\ee}{\mathrm{e}}
\newcommand{\vect}[1]{\mathbf{{#1}}}
\newcommand{\dd}{\mathrm{d}}
\DeclarePairedDelimiter{\abs}{\lvert}{\rvert}
\DeclarePairedDelimiter{\norm}{\lVert}{\rVert}
\let\originalleft\left
\let\originalright\right
\renewcommand{\left}{\mathopen{}\mathclose\bgroup\originalleft}
\renewcommand{\right}{\aftergroup\egroup\originalright}
\newcommand{\N}{\mathds{N}}
\newcommand{\Z}{\mathds{Z}}
\newcommand{\C}{\mathds{C}}
\newcommand{\D}{\mathds{D}}
\newcommand{\Hb}{\mathds{H}}
\newcommand{\R}{\mathds{R}}
\newcommand{\T}{\mathds{T}}
\newcommand{\Us}{\mathscr{U}}
\newcommand{\Ac}{\mathcal{A}}
\newcommand{\Bc}{\mathcal{B}}
\newcommand{\Cc}{\mathcal{C}}
\newcommand{\Dc}{\mathcal{D}}
\newcommand{\Ec}{\mathcal{E}}
\newcommand{\Fc}{\mathcal{F}}
\newcommand{\Gc}{\mathcal{G}}
\newcommand{\Ic}{\mathcal{I}}
\newcommand{\Kc}{\mathcal{K}}
\newcommand{\cM}{\mathcal{M}}
\newcommand{\Nc}{\mathcal{N}}
\newcommand{\Pc}{\mathcal{P}}
\newcommand{\Sc}{\mathcal{S}}
\newcommand{\Uc}{\mathcal{U}}
\renewcommand{\Im}{\mathrm{Im}\,}
\renewcommand{\Re}{\mathrm{Re}\,}
\newcommand{\meg}{\leqslant}
\newcommand{\Meg}{\geqslant}
\newcommand{\eps}{\varepsilon}
\renewcommand{\phi}{\varphi}
\newcommand{\mi}{\mu}
\newcommand{\leftexp}[2]{{\vphantom{#2}}^{#1}{#2}} 
\newcommand{\trasp}{\leftexp{t}}
\newcommand{\Lin}{\mathscr{L}}
\title[Invariant Spaces of Holomorphic Functions]{Invariant Spaces of
  Holomorphic Functions on the Siegel Upper Half-Space}
\date{}
\begin{document}

\theoremstyle{definition}
\newtheorem{deff}{Definition}[section]

\newtheorem{oss}[deff]{Remark}

\newtheorem{ass}[deff]{Assumptions}

\newtheorem{nott}[deff]{Notation}

\theoremstyle{plain}
\newtheorem{teo}[deff]{Theorem}

\newtheorem*{teo*}{Theorem}

\newtheorem{lem}[deff]{Lemma}

\newtheorem{prop}[deff]{Proposition}

\newtheorem{cor}[deff]{Corollary}

\author[M. Calzi, M. M. Peloso]{Mattia Calzi, 
	Marco M. Peloso}

\address{Dipartimento di Matematica, Universit\`a degli Studi di
	Milano, Via C. Saldini 50, 20133 Milano, Italy}
\email{{\tt mattia.calzi@unimi.it}}
\email{{\tt marco.peloso@unimi.it}}

\keywords{Dirichlet space, symmetric Siegel domains, Wallach set, invariant spaces.}
\thanks{{\em Math Subject Classification 2020}: 46E15, 47B33, 32M15 }
\thanks{The authors are members of the 	Gruppo Nazionale per l'Analisi
  Matematica, la Probabilit\`a e le	loro Applicazioni (GNAMPA) of
  the Istituto Nazionale di Alta Matematica (INdAM). The authors were partially funded by the INdAM-GNAMPA Project CUP\_E55F22000270001.
} 

\begin{abstract}
	 In this paper we consider the (ray) representations of the group $\Aut$ of biholomorphisms of the Siegel upper half-space $\Uc$ defined by $U_s(\phi) f=(f\circ \phi^{-1}) (J \phi^{-1})^{s/2}$, $s\in\R$, and characterize the semi-Hilbert spaces $H$ of holomorphic functions on $\Uc$ satisfying the following assumptions:
	\begin{enumerate}
		\item[(a)] $H$ is strongly decent;
		
		\item[(b)] $U_s$ induces a bounded ray representation of the group $\Aff$ of affine automorphisms of $\Uc$ in $H$.
	\end{enumerate}
	We use this description to improve the known characterization of the semi-Hilbert spaces of holomorphic functions on $\Uc$ satisfying (a) and (b) with $\Aff$ replaced by $\Aut$.
	
	In addition, we characterize the mean-periodic holomorphic functions on $\Uc$ under the representation $U_0$ of $\Aff$.
\end{abstract}
\maketitle

\section{Introduction}

Let $\D$ be the unit disc in $\C$, and consider the (M\"obius) group of its biholomorphisms, namely
\begin{equation}\label{def:AutD}
\Aut(\D)\coloneqq \Set{z\mapsto \alpha\frac{z-b}{1-z\overline b}\colon \alpha\in \T, \abs{b}<1}.
\end{equation}
 Many classical spaces of holomorphic functions on $\D$ enjoy the
property of being 
preserved by composition with elements of $\Aut(\D)$. Notable examples include the space $H^\infty(\D)$ of bounded holomorphic functions on $\D$, the Bloch space
\[
\Bc(\D)=\Set{f\in \Hol(\D)\colon \sup_{z\in \D} (1-\abs{z}^2) \abs{f'(z)}<\infty},
\]
and the Dirichlet space
\[
\Dc(\D)=\Set{f\in \Hol(\D)\colon \int_\D \abs{f'(z)}^2\,\dd z<\infty}.
\]
In these examples, not only the considered space is invariant under composition by the elements of $\Aut(\D)$, but also their natural seminorms are: namely, the $\sup$-norm for $H^\infty(\D)$, the seminorm 
\[
f\mapsto \sup_{z\in \D} (1-\abs{z}^2) \abs{f'(z)}
\]
for the Bloch space $\Bc(\D)$, and the seminorm
\[
f\mapsto \int_\D \abs{f'(z)}^2\,\dd z
\]
for the Dirichlet space $\Dc(\D)$. These spaces are usually said to be M\"obius-invariant in the literature, even though there seems to be no general agreement about which axioms  a M\"obius-invariant space should satisfy. 
For example, in~\cite{ArazyFisher,ArazyFisherPeetre,Peloso} a semi-Banach space\footnote{A vector space endowed with a seminorm with respect to which it is complete.} $X$ of holomorphic functions on $\D$ is said to be M\"obius-invariant if the following hold:
\begin{enumerate}
	\item[(1)] $X$ embeds continuously in $\Bc(\D)$ (endowed with the  non-Hausdorff topology defined above);
	
	\item[(2)] there is a constant $C\Meg 1$ such that for every $f\in X$ and for every $\phi\in \Aut(\D)$, the function $f\circ \phi$ belongs to $X$, and $\norm{f\circ \phi}_X\meg C\norm{f}_X$;
	
	\item[(3)] for every $f\in X$, the mapping $\Aut(\D)\ni \phi\mapsto f\circ \phi\in X$ is continuous.
\end{enumerate}
Condition (2) is what properly states that $X$ is M\"obius-invariant, while condition (1) is imposed to prevent pathological spaces, and condition (3) is only needed to simplifying some arguments, but may be weakened without compromising the resulting theory.
In fact, condition (3) is superfluous when the Banach space associated
with $X$ (that is, $X$ if $X$ is Hausdorff and  $X/\C$ otherwise) is
reflexive (cf.~\cite[Corollary 4.4]{Survey});  moreover it prevents from
considering some natural spaces, such as $H^\infty(\D)$ and
$\Bc(\D)$.  Furthermore,  
no condition is imposed to ensure that the constant functions belong
to $X$, if $X\neq \Set{0}$, even though this fact is tacitly assumed
in~\cite{ArazyFisher,ArazyFisherPeetre,Peloso}. We shall discuss this
and other related issues in Section~\ref{sec:3}. 

Conditions (2) and (3) may then be rephrased saying that the mapping 
\[
U_0\colon \Aut(\D)\ni \phi\mapsto [f\mapsto f\circ \phi^{-1}]\in \Lin(\Hol(\D))
\]
induces a \emph{continuous bounded} representation of $\Aut(\D)$ in $X$.

We also remark that~\cite{RubelTimoney} shows that, assuming condition
(2) to hold,  condition (1) is equivalent to assuming that either
$X=\Set{0}$ or $X$ admits a decent continuous linear functional, that
is, a continuous linear functional which is continuous for the
topology of compact convergence on $\D$ (or, equivalently, which
extends to a continuous linear functional on $\Hol(\D)$). Thus, this
condition may be seen as a rather natural replacement for the
condition that $X$ should embed continuously in $\Hol(\D)$, which
nonetheless is only available when $X$ is Hausdorff. These conditions
were later developed in~\cite{ArazyFisher2,Arazy} in order to deal
with more general contexts, but remain essentially analogous. 

More recently, in~\cite{AlemanMas} a different notion of M\"obius
invariant spaces was introduced (actually,~\cite{AlemanMas} deals with
some suitable weighted action of $\Aut(\D)$ on $\Hol(\D)$ which we
shall describe below, but we shall translate their definitions in our
context for the sake of simplicity). First of all, because of the
slightly different context considered in~\cite{AlemanMas}, only Banach
spaces are considered. Then, a `conformally invariant space of index
$0$', according to~\cite{AlemanMas}, is a Banach space $X$ of
holomorphic functions such that the following hold: 
\begin{enumerate}
	\item[(1$'$)] $X$ embeds continuously in $\Hol(\D)$;
	
	\item[(2)] there is a constant $C\Meg 1$ such that for every
          $f\in X$ and for every $\phi\in \Aut(\D)$, the function
          $f\circ \phi$ belongs to $X$, and $\norm{f\circ \phi}_X\meg
          C\norm{f}_X$; 
	
	\item[(3$'$)] for every $R>1$, $\Hol(R\D)\subseteq X$.
\end{enumerate}
We observe explicitly that (1$'$) is the natural replacement of (1)
for Banach spaces. Notice that, in the context actually considered
in~\cite{AlemanMas}, this requirement is perfectly natural: reasonably
extending the above definition to semi-Banach spaces would \emph{not}
provide any further examples as a consequence of Remark~\ref{oss:1}.  As for
what concerns condition (3$'$), it is essentially weaker than
condition (3): if condition (3) holds, then $f(R\,\cdot\,)\in X$ for
every $f\in X$ and for every $R\in (0,1)$ (see the proof of~\cite[Proposition 1]{ArazyFisher2} or~\cite[Proposition 4.3]{Survey}). In addition, if $X$
contains both non-trivial constant and non-constant functions, then
$X$ is dense in $\Hol(\D)$, so that we are not too far away from the
requirement $\Hol(R\D)\subseteq X$ for every $R>1$. \medskip

In this paper, we shall follow a somewhat different strategy. Indeed, we shall drop both conditions (3) and (3$'$), as the former will be automatically satisfied (cf.~Proposition~\ref{prop:32}), while the latter will not always be satisfied, and does not seem to be of any use for our purposes. Since we shall also consider some weaker versions of (2), which we will discuss later on, we shall replace (1) with the requirement that $X$ should be \emph{strongly} decent. This condition means that the space of continuous linear functionals on $X$ which extend to continuous linear functionals on $\Hol(\D)$ is  dense in the weak dual topology of the dual $X'$ of $X$. Assuming $X$ to be a semi-Banach space, this assumption is equivalent to the existence of a closed vector subspace $V$ of $\Hol(\D)$ such that $X\cap V$   is the closure of $\Set{0}$ in $X$, and such that the canonical mapping $X\to \Hol(\D)/V$ is continuous.

Proceeding further into this matter, we observe that there are several other Banach spaces of holomoprhic functions on $\D$ on which $\Aut(\D)$ has a natural isometric `weighted action'. For example, set, for every $s\in \R$ and for every $\phi\in \Aut(\D)$,
\[
\widehat U_s(\phi) f= (f\circ \phi^{-1}) (J\phi^{-1})^{s/2}
\]
for every $f\in \Hol(\D)$, where $J\phi^{-1}$ denotes the complex
Jacobian of $\phi^{-1}$. Observe that, since $\D$ is convex, the power
$(J\phi^{-1})^{s/2}$ may be defined as a holomorphic function on
$\D$. Nonetheless, unless $s\in 2 \Z$, this power is only defined up
to the multiplication by suitable unimodular constants, so that we
only define $\widehat U_s(\phi)$ as an element of $
\Lin(\Hol(\D))/\T$. In other words, $\widehat U_s $ becomes a
\emph{ray} representation (cf.~\cite{Bargmann}) of $\Aut(\D)$ in
$\Hol(\D)$ instead of an ordinary one. It then makes sense to say that
$\widehat U_s$ induces a bounded or isometric \emph{ray}
representation in any semi-Banach space of holomorphic functions on
$\D$, but one cannot define integral operators of the form $\widehat
U_s(f)$ for $f\in L^1(\Aut(\D))$, even if the \emph{ray}
representation is continuous. In order to circumvent this difficulty,
one may lift $\widehat U_s$ to a continuous ordinary representation
$\widetilde U_s$ of the universal covering group $\widetilde \Aut(\D)$
in $\Hol(\D)$, as we shall do in the sequel. 

Then, for every $p\in [1,\infty)$ and for every $s>2/p$, the weighted Bergman spaces\footnote{  We remark explicitly that this parametrization of the weighted Bergman spaces is different from the one we shall use in the sequel.}
\[
A^p_s(\D)\coloneqq\Set{f\in \Hol(\D)\colon \int_{\D} \abs{f(z)}^p (1-\abs{z}^2)^{ps/2-2}\,\dd z }
\]
are invariant under the action of $\widetilde U_s$  with their norms (cf.~Proposition~\ref{prop:24}). Notable examples arise when $p=2$, in which case $A^2_s(\D)$ is a reproducing kernel Hilbert space (RKHS for short) of holomorphic functions on $\D$, that is, a Hilbert space which embeds continuously in $\Hol(\D)$. More precisely, its reproducing kernel is 
\[
\Kc_{s}\colon(z,z')\mapsto c_s (1-z\overline {z'})^{-s}
\]
for a suitable $c_s>0$; this means that $\Kc_s(\,\cdot\,,z)\in A^2_s(\D)$ and that
\[
f(z)=\langle f\vert \Kc_s(\,\cdot\,, z)\rangle_{A^2_s(\D)}
\]
for every $f\in A^2_s(\D)$ and for every $z\in \D$.

From the viewpoint of the representation theory of the simple Lie
group $\Aut(\D)$ (and of its covering group $\widetilde \Aut(\D)$),
the representations $\widetilde U_s$ of $\widetilde \Aut(\D)$ in
$A^2_s(\D)$, for $s>1$, are of particular importance, as they may be
identified with the  discrete series representations of $\widetilde
\Aut(\D)$ (cf.~\cite{VergneRossi}). In other words, every irreducible
subrepresentation of the left regular representation of $\widetilde
\Aut(\D)$ in $L^2(\widetilde \Aut(\D))$ is unitarily equivalent to one
of the representations $\widetilde U_s$ in $A^2_s(\D)$, $s>1$. 

Now, observe that saying that $A^2_s(\D)$ is $\widetilde U_s$-invariant with its norm is equivalent to saying that $[\widetilde U_s(\phi)\otimes \overline{\widetilde U_s(\phi)} ]\Kc_s=\Kc_s$ for every $\phi \in \widetilde \Aut(\D)$. In particular, the sesquiholomorphic function
\[
\Kc^{ s}\colon (z,z')\mapsto (1-z\overline{z'})^{-s}
\]
satisfies $[\widetilde U_s(\phi)\otimes \overline{\widetilde U_s(\phi)}] \Kc^s=\Kc^s $ for every $s\in \R$ and for every $\phi \in \widetilde \Aut(\D)$. As a consequence, if $\Kc^s$ is the reproducing kernel of some RKHS $\Ac_s$, then $\Ac_s$ is $\widetilde U_s$-invariant with its norm. This is the case if and only if $\Kc^s$ satisfies a suitable positivity condition, namely
\[
\sum_{j,k=1}^N \alpha_j \overline{\alpha_k} \Kc^s(z_j, z_k)\Meg 0
\]
for every $N\in\N$, for every $z_1,\dots, z_N\in \D$, and for every $\alpha_1,\dots, \alpha_N\in \C$, and this happens exactly when $s \Meg 0$ (cf.~\cite{VergneRossi}). In this case, $\Ac_s$ is the completion of the vector subspace of $\Hol(\D)$ generated by the $\Kc^s(\,\cdot\, ,z)$, as $z$ runs through $\D$, endowed with the unique scalar product such that
\[
\langle \Kc^s(\,\cdot\,,z)\vert \Kc^s(\,\cdot\,,z')\rangle_{\Ac_s}=\Kc^s(z,z')
\]
for every $z,z'\in \D$. Observe that $\Ac_s$ embeds canonically and continuously into $\Hol(\D)$, so that we may consider it as a vector subspace of $\Hol(\D)$, endowed with a finer topology.
For example, $\Ac_1$ is the Hardy space
\[
H^2(\D)=\Set{f\in \Hol(\D)\colon \sup_{r\in (0,1)} \int_\T \abs{f(rz)}^2\,\dd z<\infty },
\]
endowed with a proportional  norm, while $\Ac_0$ is simply the space of constant functions.
It is interesting to observe that also a converse result holds: namely, if $H$ is a non-trivial RKHS of holomorphic functions on $\D$ and $H$ is $\widetilde U_s$-invariant with its norm, then $s\Meg 0$ and $H=\Ac_s$ with a proportional norm (cf.~\cite{ArazyFisher3}). 
If we momentarily ignore the problem of providing simpler descriptions for the spaces $\Ac_s$, $s\in (0,1)$, these results solve the problem of classifying $\widetilde U_s$-invariant RKHS of holomorphic functions on $\D$. In particular, this tells us that the Dirichlet space $\Dc(\D)$, which is $\widetilde U_0$-invariant with its natural \emph{seminorm}, may \emph{not} be endowed with any finer $\widetilde U_0$-invariant norm for which $\Dc(\D)$ embeds continuously in $\Hol(\D)$.
This naturally leads to the problem of finding and describing \emph{semi-Hilbert} spaces of holomorphic functions on $\D$ which are   $\widetilde U_s$-invariant with their seminorms, and satisfy some additional assumptions such as (1) above (for $s=0$) or the strong decency condition we described earlier. 
To this end, we first observe that, if $s\in-\N$, then the $(1-s)$-th order derivative intertwines $\widetilde U_s$ and $\widetilde U_{2-s}$, that is,
\[
(\widetilde U_s (\phi)f)^{(1-s)}=\widetilde U_{2-s} (\phi)f^{(1-s)}
\]
for every $f\in \Hol(\D)$ and for every $\phi\in \widetilde \Aut(\D)$ (cf.~\cite[Theorem 6.4]{Arazy}). Consequently, the semi-Hilbert space
\[
H_s=\Set{f\in \Hol(\D)\colon f^{(1-s)}\in \Ac_{2-s}}
\]
is $\widetilde U_s$-invariant with its seminorm $f\mapsto \norm{f^{(1-s)}}_{\Ac_{2-s}}$   for every $s\in -\N$. Conversely, if $H$ is a non-Hausdorff non-trivial decent semi-Hilbert space of holomorphic functions which is $\widetilde U_s$-invariant with its seminorm, then  $s\in -\N$ and $H$ embeds as a \emph{dense} subspace of $H_s$, with a proportional seminorm (cf.~\cite{ArazyFisher,ArazyFisher2,Arazy} and Theorem~\ref{teo:6}). Notice that, since $H_s$ is \emph{not} Hausdorff, it admits non-closed complete dense vector  subspaces, which are precisely the vector subspaces $V$ such that $V+\overline{\Set{0}}^{H_s}=H_s$, where $\overline{\Set{0}}^{H_s}$ denotes the closure of $\Set{0}$ in $H_s$. We observe explicitly that we do \emph{not} know whether $H_s$ contains \emph{proper} complete dense $\widetilde U_s$-invariant vector subspaces.
\medskip

These  problems have been investigated also in different contexts and generality. On the one hand, one may consider invariant Banach (or semi-Banach) spaces of holomorphic functions instead of simply Hilbert (or semi-Hilbert) spaces. As one may expect, in this case one cannot find a complete classification, but one may still find minimal and maximal spaces and study some general properties of invariant spaces. Cf.~\cite{ArazyFisher4,ArazyFisherPeetre,Zhu,Peloso,ArazyUpmeier3,AlemanMas,Survey} for more on this problem.  
On the other hand, one may replace the unit disc $\D$ with more general domains, such as the unit ball in $\C^n$ (cf.~\cite{ArazyFisher2,Zhu,Peloso,Arazy3}), or more general irreducible symmetric domains (cf.~\cite{FarautKoranyi2,ArazyFisher2,Arazy2,Arazy,ArazyUpmeier4,ArazyUpmeier,Yan,ArazyUpmeier2,Tubi,Survey}).

In this paper we consider a related problem. Our first aim was to provide a reasonable description of invariant semi-Hilbert spaces on the unbounded realizations of irreducible symmetric domains as Siegel domains of type II. For example, the natural Siegel domain associated with the unit disc $\D$ is the upper half plane $\C_+=\R+i \R_+^*$. Since we noticed that many of these spaces could be essentially characterized requiring only the invariance under the action of the group of \emph{affine} biholomorphisms, we decided to perform a more detailed study of \emph{affinely} invariant semi-Hilbert spaces of holomorphic functions on symmetric Siegel domains. In this paper we shall consider only rank $1$ spaces, namely the Siegel upper half-spaces
\[
\Uc_{n+1}\coloneqq\Set{(\zeta,z)\in \C^{n+1}\colon \Im z>\abs{\zeta}^2},
\]
so that $\Uc_{n+1}$ is biholomorphic to the unit ball in
$\C^{n+1}$. Notice that this domain reduces to the upper half-plane
when $n=0$. The first author will deal with more general domains in a forthcoming paper~\cite{Tubi}. 

Notice that the group $\Aff(\Uc_{n+1})$ of affine biholomorphisms  of $\Uc_{n+1}$ is particularly simple to describe  (cf.~Lemma~\ref{lem:2.1}). In addition, the representation $\widetilde U_s$ essentially induces the representation $U_s$ of $\Aff(\Uc_{n+1})$ defined by\footnote{We shall comment on the exponent $s/(n+2)$ later on.}
\[
U_s(\phi)f= (f\circ \phi^{-1}) \abs{J \phi^{-1}(0,i)}^{s/(n+2)},
\]
in the sense that $U_s(\phi)$ differs from the various $\widetilde U_s(\psi)$, for $\psi\in \widetilde \Aut(\Uc_{n+1})$ acting as $\phi$ on $\Uc_{n+1}$, by a unimodular constant.

As it turns out, the decency assumption, which was essentially (though not completely) sufficient in order to deal with $\widetilde U_s$-invariant spaces, no longer seems to be sufficient for the study of $U_s$-invariant spaces. This is the reason that brought us to develop the notion of strongly decent spaces.

We shall therefore consider the following problem: `provide a
description of the strongly decent semi-Hilbert spaces in which $U_s$
induces  a bounded representation of $\Aff(\Uc_{n+1})$'. In fact,
we are able to provide a reasonably complete classification of these
spaces, see~Theorem~\ref{teo:6} for the case $n=0$ and
Theorem~\ref{teo:3} for the case $n\ge1$. 
Explicitly, when $n=0$, we shall prove the following result.

\begin{teo*}
	Take $s\in \R$ and a strongly decent semi-Hilbert space $H$ of holomorphic functions on $\C_+$. Assume that $U_s$ induces a bounded (resp.\ isometric) representation of $\Aff(\C_+)$ in $H$. Then, there is $k\in\N$ such that $s+2k\Meg 0$ and such that $H$ embeds as a dense subspace of
	\[
	\Set{f\in \Hol(\C_+)\colon f^{(k)}\in \Ac_{s+2 k}(\C_+)}
	\]
	with an equivalent (resp.\ proportional) seminorm.
\end{teo*}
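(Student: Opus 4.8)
The plan is to transport the problem, via the Laplace transform, to weighted $L^2$-spaces on $(0,\infty)$, and then to exploit the $ax+b$-structure of $\Aff(\C_+)$: recall that $\Aff(\C_+)$ consists of the maps $z\mapsto az+b$ with $a>0$, $b\in\R$, and that $U_s(z\mapsto az+b)$ acts on $\Hol(\C_+)$ by $f\mapsto a^{-s/2}f\bigl((\,\cdot\,-b)/a\bigr)$. We may assume the seminorm of $H$ is not identically zero. Set $N=\overline{\Set{0}}^H$, let $\hat H$ be the Hilbert completion of $H/N$, and let $\pi$ be the representation of $\Aff(\C_+)$ induced by $U_s$ on $\hat H$; by Proposition~\ref{prop:32} it is strongly continuous, and by hypothesis it is bounded (resp.\ isometric). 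Since $\Aff(\C_+)$ is amenable, in the bounded case we may replace the scalar product of $\hat H$ by an equivalent one making $\pi$ unitary (this changes the seminorm of $H$ only up to equivalence); in the isometric case $\pi$ is already unitary. By strong decency, the continuous functionals on $H$ that extend to $\Hol(\C_+)$ are weak-$*$ dense in $H'=\hat H'$; in particular at least one of them is non-zero.

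Restrict $\pi$ to the subgroup $T\cong\R$ of translations $\tau_b\colon z\mapsto z+b$, on which the weight is trivial. I claim the Stone spectral measure $E$ of $\pi|_T$ is carried by $[0,\infty)$. Affine invariance, through the identity $U_s(\phi_{1/t})f=t^{s/2}f(t\,\cdot\,)$ and an extendable continuous functional, forces every $f\in H$ to grow at most polynomially on each half-plane $\Set{\Im z\Meg t_0}$, $t_0>0$; hence, for an extendable continuous functional $\tilde L$, the matrix coefficient $b\mapsto\tilde L(f(\,\cdot\,-b))$ is holomorphic on $\Set{\Im b<0}$ with polynomial growth and continuous and bounded up to $\R$, so by Phragm\'en--Lindel\"of it is bounded on $\Set{\Im b\meg0}$; by the F.\ and M.\ Riesz / Paley--Wiener theorem its spectrum, whence $E$, lies in $[0,\infty)$. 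An atom of $E$ at $0$ corresponds to the $T$-invariant, i.e.\ constant, functions, so $E(\Set{0})\hat H$ is at most one-dimensional; if it were non-zero alongside mass on $(0,\infty)$, strong decency would fail, so either $\hat H$ is one-dimensional — in which case $s=0$, $H=\Ac_0(\C_+)$, and the statement holds with $k=0$ — or $E$ is carried by $(0,\infty)$, which I now assume. Using the extendable functionals that evaluate at points of $\C_+$ one then checks that $f$ itself has a Laplace representation $f(z)=\int_0^\infty\ee^{i\lambda z}\hat f(\lambda)\,\dd\lambda$ modulo a polynomial; by uniqueness of the Laplace transform $[f]\mapsto\hat f$ is injective and intertwines $\pi|_T$ with multiplication by $\ee^{-ib\lambda}$, so $\pi|_T$ has uniform multiplicity one and $[f]\mapsto\hat f$ identifies $\hat H$ isometrically with $L^2\bigl((0,\infty),w\,\dd\lambda\bigr)$ for some positive measurable weight $w$, with $H/N$ dense.

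In this model the dilation $\phi_a\colon z\mapsto az$ acts by $\hat f(\lambda)\mapsto a^{1-s/2}\hat f(a\lambda)$, so the unitarity of $\pi(\phi_a)$ for all $a>0$ forces $w(\lambda/a)=a^{s-1}w(\lambda)$ a.e., whence $w(\lambda)=c\,\lambda^{1-s}$ for some $c>0$. Fix $k\in\N$ with $s+2k>0$ (any large $k$ works; this is exactly the condition that $\Ac_{s+2k}(\C_+)$ be a non-trivial reproducing kernel Hilbert space), so that the Paley--Wiener isomorphism realises $\Ac_{s+2k}(\C_+)$ as $L^2\bigl((0,\infty),\lambda^{1-(s+2k)}\,\dd\lambda\bigr)$ under the Laplace transform, while $\dd/\dd z$ acts by $\hat f(\lambda)\mapsto(i\lambda)^k\hat f(\lambda)$. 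Therefore $f^{(k)}\in\Ac_{s+2k}(\C_+)$ if and only if $\int_0^\infty|\hat f(\lambda)|^2\lambda^{1-s}\,\dd\lambda<\infty$, with $\norm{f^{(k)}}_{\Ac_{s+2k}}$ proportional to the square root of that integral, and every element of $L^2\bigl((0,\infty),\lambda^{1-s}\,\dd\lambda\bigr)$ so arises, by taking $k$-fold primitives (which exist because $\C_+$ is simply connected). Hence $H_k\coloneqq\Set{f\in\Hol(\C_+)\colon f^{(k)}\in\Ac_{s+2k}(\C_+)}$, with its natural seminorm, satisfies $H_k/\overline{\Set{0}}^{H_k}\cong L^2\bigl((0,\infty),\lambda^{1-s}\,\dd\lambda\bigr)$ proportionally. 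Comparing with the preceding paragraph, $H$ embeds as a dense subspace of $H_k$ with an equivalent (resp.\ proportional) seminorm, which is the assertion.

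The delicate step is the second paragraph: passing from the abstract spectral resolution of $\pi|_T$ to its concrete implementation by the Laplace transform of the genuine holomorphic functions of $H$ — positivity of the spectrum and multiplicity one — together with the exclusion of an atom at $0$. This is exactly where the strong decency hypothesis, rather than mere decency, is indispensable, and where one must control the holomorphic continuation of the matrix coefficients $b\mapsto\tilde L(f(\,\cdot\,-b))$ into the lower half-plane.
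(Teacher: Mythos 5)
Your overall strategy (diagonalizing the translation subgroup and realizing $\hat H=H/N$ as $L^2((0,\infty),\lambda^{1-s}\,\dd\lambda)$ via the Laplace transform, then matching this with the Paley--Wiener picture of $\Ac_{s+2k}(\C_+)$) is a genuinely different route from the paper, which instead classifies the closed $\Aff(\C_+)$-invariant subspaces of $\Hol(\C_+)$ (Propositions~\ref{prop:6} and~\ref{prop:28}), classifies invariant RKHS by comparing reproducing kernels (Theorem~\ref{teo:1}), and transfers via the intertwining $\partial^k$ (Corollary~\ref{cor:1}). But as written, the heart of your argument is asserted rather than proved, and one step is false. In your second paragraph: (i) the pointwise polynomial growth of an arbitrary $f\in H$ on half-planes does not follow from the boundedness of $\phi\mapsto\tilde L(U_s(\phi)f)$ — an extendable functional is given by a compactly supported measure, so you only control \emph{averages} of $f$ over affine images of a fixed compact set, not pointwise values, and the Phragm\'en--Lindel\"of step needs exactly the a priori growth in the lower half-plane that you are trying to establish; (ii) ``extendable functionals that evaluate at points of $\C_+$'' need not exist: in the non-Hausdorff case point evaluations are typically \emph{not} continuous on $H$ (this is the whole point of the seminormed setting), so the Laplace representation ``modulo a polynomial'', the injectivity of $[f]\mapsto\hat f$, and uniform multiplicity one are all unproven. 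This is precisely the content the paper extracts from the mean-periodicity result (every closed invariant subspace is the closure of its polynomials, hence equals some $\Pc^k$) combined with Theorem~\ref{teo:1}; you acknowledge this is ``the delicate step'' but do not supply a substitute argument, and nothing in your sketch uses strong decency in a way that could replace it.

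Moreover, the treatment of the atom at $0$ is incorrect. A translation-fixed vector of $\hat H$ is the class of an $f$ with $f(\cdot-b)-f\in N$ for all $b\in\R$; since $N$ may contain polynomials, such $f$ are polynomials, not constants. For example, $H=\Pc^{k+1}$ with seminorm $f\mapsto\abs{f^{(k)}}$ is strongly decent, isometrically $U_{-2k}$-invariant, has one-dimensional Hausdorff quotient on which translations act trivially, and has $s=-2k\neq0$: your dichotomy would wrongly force $s=0$ and $H=\Ac_0(\C_+)$ (dilations act on the class of $z^{k}$ by $R^{-s-2k}$, not by $R^{-s}$). Likewise, the claim that an atom at $0$ coexisting with mass on $(0,\infty)$ ``would contradict strong decency'' is exactly the assertion $\Pc^k\cap H=\Pc^h$ that the paper derives from Proposition~\ref{prop:31} together with Proposition~\ref{prop:28}; you give no argument for it. Finally, your main line only produces $k$ with $s+2k>0$, whereas the theorem (and the example above) also requires the degenerate case $s+2k=0$. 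The spectral-model idea could probably be carried through, but it would need a proof that every $f\in H$ is a Laplace-type integral plus a polynomial of controlled degree — which is essentially the paper's Proposition~\ref{prop:6} in disguise — so the proposal as it stands has a genuine gap at its central step.
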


Here,  $\Ac_{s+2k}(\C_+)$ denotes the  RKHS of holomorphic functions on $\C_+$ whose reproducing kernel is $(z,z')\mapsto [(z-\overline{z'})/(2i)]^{-s-2k}$. Notice that $\Ac_{s+2k}(\C_+)$ is $U_{s+2 k}$-invariant with its norm,  so that it is canonically isomorphic to the space $\Ac_{s+2k}$ described before. 
The case of semi-Hilbert spaces of holomorphic functions on $\Uc_{n+1}$ is similar, but somewhat more complicated.

We shall then use the classification of $U_s$-invariant spaces to reobtain and slightly improve the classification of $\widetilde U_s$-invariant spaces.

Our techniques essentially rely on two ingredients: (1) a complete description of the \emph{closed} $U_s$-invariant (or, equivalently, $U_0$-invariant) vector subspaces of $\Hol(\Uc_{n+1})$; (2) the intertwining properties of the derivatives $\partial_z^k$ between the representations $U_s$ and $U_{s+2k}$.

Concerning (1), we shall first show that every closed $U_0$-invariant vector subspace $V$ of $\Hol(\Uc_{n+1})$ is the closure of the set of polynomials contined in $V$ (cf.~Proposition~\ref{prop:6}). This result may be considered as a complete description of mean-periodic functions in $\Hol(\Uc_{n+1})$ (with respect to the action of $\Aff(\Uc_{n+1})$ induced by $U_0$). We shall then provide a description of the $U_0$-invariant vector spaces of holomorphic polynomials on $\Hol(\Uc_{n+1})$.
This description, combined with the strong decency condition on $H$, will show that  $H$ is determined by $\partial^k_2 H$ for some $k\in\N$, in the sense that $\partial_2^k H$ is a reproducing kernel Hilbert space of holomorphic functions on $\Uc_{n+1}$ and $H\cap \ker \partial_2^k$ is the closure of $\Set{0}$ in $H$. Notice that this is rigorously so when $n=0$, while in the general case the situation is slightly more complicated. Then, (2) will imply that $\partial_2^k H$ is actually $U_{s+2k}$-invariant, so that by a simple comparison between the reproducing kernels of $\partial_2^k H$ and $\Ac_{s+2 k}(\C_+)$ (when $s+2k \meg 0$), using the fact that $\Aff(\Uc_{n+1})$ is amenable and acts transitively on $\Uc_{n+1}$, we see that $\partial_2^k H=\Ac_{s +2k}(\C_+)$. When $n=0$, this provides the desired description of the possible $H$, whereas in the general case some further arguments are necessary.

Here is a plan of the paper. In Section~\ref{sec:2}, we shall review some basic facts on the group $\Aut(\Uc_{n+1})$ of biholomorphisms of $\Uc_{n+1}$ and some of its notable subgroups. We shall then review some weighted Bergman and related spaces, and prove their $\widetilde U_s$-invariance. Finally, we shall discuss some properties of (strongly) decent (and saturated) spaces in a general framework. 

In Section~\ref{sec:3}, we shall characterize the $U_s$-invariant reproducing kernel Hilbert spaces of holomorphic functions on $\Uc_{n+1}$ (cf.~Theorem~\ref{teo:1}), and discuss the unitary equivalences between the associated unitary representations of $\Aff(\Uc_{n+1})$ (and $\widetilde \Aut(\Uc_{n+1})$, whenevery possible), cf.~Remark~\ref{oss:2}.

In Section~\ref{sec:4bis}, we shall characterize the strongly decent $U_s$-invariant semi-Hilbert spaces of holomorphic functions on $\Uc_0=\C_+$ (cf.~Theorem~\ref{teo:6}), and discuss the unitary equivalences between the associated unitary representations of $\Aff(\Uc_{n+1})$ (and $\widetilde \Aut(\Uc_{n+1})$, whenevery possible), cf.~Remark~\ref{oss:4}.
In Section~\ref{sec:5}, we shall perform a similar study in the general case (cf.~Theorems~\ref{teo:3} and~\ref{teo:4}, and Remark~\ref{oss:3}).

In Section~\ref{sec:4}, we shall trasfer the preceding results to the bounded setting, thus studying spaces of holomorphic functions on the unit ball $B_{n+1}$ of $\C^{n+1}$ (cf.~Proposition~\ref{prop:39}), as well as compare our strong decency (and saturation) conditions with the ones appearing in the literature (cf.~Proposition~\ref{prop:38}). Even though the results on $\widetilde U_s$-invariant semi-Hilbert spaces of holomorphic functions on $B_{n+1}$ available in the literature are fairly complete, the characterization of the $\widetilde U_s$-invariant semi-Hilbert spaces $H$ of holomorphic functions on $B_{n+1}$ for $s<0$ and $n>0$ under the sole assumption that $\widetilde U_s$ induces a bounded (and not necessarily isometric) representation in $H$ which stems from Theorem~\ref{teo:4} seems to be new.

Finally, in Section~\ref{sec:app} we shall prove some general results on the closed $\Aff(\Uc_{n+1})$-$U_0$-invariant vector subspaces of $\Uc_{n+1}$. Since these facts are of independent interest and may be proved for general Siegel domains (of type II) with no additional effort, we shall consider the general case instead of confining ourselves to the domains $\Uc_{n+1}$.

\section{Preliminaries}\label{sec:2}

We denote by $B_{n+1}$ the unit ball in $\C^n\times \C$, and by
$\Uc_{n+1}$ the Siegel upper half-space 
\[
\Uc_{n+1}\coloneqq \Set{(\zeta,z)\in \C^n\times \C\colon \Im z>\abs{\zeta}^2}.
\] 
Notice that, when $n=0$, $B_1$ is the unit disc in $\C$ and $\Uc_1$ is the
upper half-plane   $\C_+$.   
 Then,   the Cayley transform
\[
\Cc\colon B_{n+1}\ni (\zeta,z)\mapsto \left( \frac{\zeta}{1-z}, i \frac{1+z}{1-z} \right)\in \Uc_{n+1},
\]
with inverse
\[
\Cc^{-1} \colon \Uc_{n+1}\ni (\zeta,z)\mapsto \left(2i \frac{\zeta}{z+i} , \frac{z-i}{z+i}\right)\in B_{n+1},
\]
induces a biholomorphism between $B_{n+1}$ and $\Uc_{n+1}$.

\subsection{Groups of Automorphisms}\label{sec:2:1}
 Given a domain $D$, we denote by $\Aut(D)$ the group of
biholomorphisms of $D$, by $\Aff(D)$ the  group of
affine biholomorphisms of $D$, and by $GL(D)$ the  group of linear biholomorphisms of $D$.

It is known that the group $\Aut(B_{n+1})$ is connected, and that it
consists of fractional linear transformations (cf.,
e.g.,~\cite[Section 2.1]{Knapp} or~\cite[Chapter 2]{Rudin}). In particular, the stabilizer of $0$
in $\Aut(B_{n+1})$ is the unitary group $U(n+1)$, and is a maximal
compact subgroup of $\Aut(B_{n+1})$. 
 When $n=0$,  $\Aut(B_1)$ is given in~\eqref{def:AutD}. The group
of biholomorphisms $\Aut(\Uc_1)$ of $\Uc_1$ consists of fractional
linear transformations, and may be described as  
\[
\Aut(\Uc_1) =\Set{z \mapsto \frac{a z+b}{c z+d}\colon a,b,c,d\in \R, ad-bc=1}.
\]

Let us now describe some notable subgroups of $\Aut(\Uc_{n+1})$. 
First of all, when $n>0$, consider the Heisenberg
group  $\Hb_n\coloneqq \C^n\times \R$, endowed with the product
\[
(\zeta,x)(\zeta',x')\coloneqq (\zeta+\zeta',x+x'+ 2 \Im \langle \zeta\vert \zeta'\rangle)
\]
for every $(\zeta,x),(\zeta',x')\in \Hb_n$, which acts freely on
$\C^n\times \C$, $\Uc_{n+1}$, and simply transitively on $\partial \Uc_{n+1}$, by the
affine transformations: 
\[
(\zeta,x)\cdot (\zeta',z')\coloneqq (\zeta+\zeta', x+i\abs{\zeta}^2+ 2 i\langle \zeta'\vert\zeta\rangle)
\]
for every $(\zeta,x)\in \Hb$ and for every $(\zeta',z')\in \C^n\times
\C$, where $\langle\,\cdot\,\vert \,\cdot\,\rangle$ denotes the (hermitian)
scalar product on $\C^n$.   With a slight abuse of notation, we
denote by $\Hb_0$ the group $\R$ of horizontal translations on
$\C_+=\Uc_1$.  Then, for all $n\ge0$ we identify $\Hb_n$ with a
subgroup of $\Aut(\Uc_{n+1})$ by means of its action on
$\Uc_{n+1}$. 

\begin{lem}\label{lem:2.1}
  The following properties hold:
  \begin{itemize}
  \item[(i)]  (the group  of linear automorphisms  of $\Uc_{n+1}$)
    \[
    	GL(\Uc_{n+1})=\Set{(\zeta,z)\mapsto (R U\zeta, R^2 z)\colon R>0,
      U\in U(n) };
    \]
  \item[(ii)] (the group  of affine automorphisms  of $\Uc_{n+1}$)
    \[
    \Aff(\Uc_{n+1})= \Hb_n \rtimes GL(\Uc_{n+1});
    \]
      \item[(iii)]  the group 
$G_T\coloneqq \Hb_n \rtimes \Set{(\zeta,z)\mapsto (R \zeta, R^2 z)\colon R>0}$
is solvable and acts simply transitively on $\Uc_{n+1}$.
\end{itemize}   
\end{lem}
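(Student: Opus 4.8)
The plan is to verify Lemma~\ref{lem:2.1} by direct computation, treating the three parts in order, since (ii) builds on (i) and (iii) is an easy consequence.

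For part (i): given a linear map $A\in GL(\C^n\times\C)$ preserving $\Uc_{n+1}$, I would first note that $A$ must preserve the boundary $\partial\Uc_{n+1}=\Set{(\zeta,z)\colon \Im z=\abs{\zeta}^2}$ and, since it is linear, also the ``silhouette'' behaviour at infinity; in particular $A$ maps the distinguished boundary point structure accordingly. Writing $A(\zeta,z)=(A_{11}\zeta+A_{12}z,\,A_{21}\zeta+A_{22}z)$ with $A_{12}\colon\C\to\C^n$, $A_{21}\colon\C^n\to\C$ linear functionals/maps, I would plug a generic boundary point into the defining equation $\Im(A_{21}\zeta+A_{22}z)=\abs{A_{11}\zeta+A_{12}z}^2$ and compare. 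The left-hand side is $\R$-linear in $(\zeta,z)$ while the right-hand side is a Hermitian quadratic form; matching degrees forces $A_{11}\zeta+A_{12}z$ to depend only on $\zeta$ (so $A_{12}=0$) and then $A_{21}=0$ as well (the linear term on the left cannot be cancelled by the quadratic term on the right for all boundary points). Hence $A$ is block-diagonal: $A(\zeta,z)=(B\zeta,\lambda z)$. The boundary condition becomes $\Im(\lambda z)=\abs{B\zeta}^2$ whenever $\Im z=\abs{\zeta}^2$; taking $\zeta=0$ forces $\lambda\in\R_{>0}$ (to preserve the upper half-plane in $z$), and then $\abs{B\zeta}^2=\lambda\abs{\zeta}^2$ for all $\zeta$, so $B=\sqrt{\lambda}\,U$ with $U\in U(n)$. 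Setting $R=\sqrt{\lambda}$ gives the stated form, and conversely all such maps clearly preserve $\Uc_{n+1}$.

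For part (ii): an affine automorphism $\phi$ has the form $\phi=T_{(\zeta_0,z_0)}\circ A$ where $A$ is linear and $T$ is a translation by $(\zeta_0,z_0)$. Applying $\phi$ to a point near the boundary, the quadratic constraint forces, as in (i), that $A\in GL(\Uc_{n+1})$; the remaining translation part must then itself be an automorphism of $\Uc_{n+1}$, i.e. an element of the Heisenberg group $\Hb_n$ acting by the affine action described before the lemma (one checks that a pure translation $(\zeta,z)\mapsto(\zeta+\zeta_0,z+z_0)$ preserves $\Uc_{n+1}$ iff it is of the Heisenberg form $(\zeta,x_0)\cdot$, which pins down $\zeta_0=\zeta$-component and forces $\Re z_0=x_0$, $\Im z_0=\abs{\zeta_0}^2$). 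Since $\Hb_n$ is normalized by $GL(\Uc_{n+1})$ inside $\Aff(\Uc_{n+1})$ (a direct conjugation check: $A$ rescales and rotates the Heisenberg parameters, staying in $\Hb_n$) and $\Hb_n\cap GL(\Uc_{n+1})=\Set{\mathrm{id}}$, we get the semidirect product decomposition $\Aff(\Uc_{n+1})=\Hb_n\rtimes GL(\Uc_{n+1})$.

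For part (iii): solvability of $G_T=\Hb_n\rtimes\R_{>0}$ is immediate, since $\Hb_n$ is two-step nilpotent (hence solvable) and $\R_{>0}$ is abelian, so the extension is solvable. For simple transitivity on $\Uc_{n+1}$: the subgroup $\R_{>0}$ (dilations $(\zeta,z)\mapsto(R\zeta,R^2z)$) together with $\Hb_n$ clearly acts transitively, since starting from the base point $(0,i)$ one reaches $(0,iR^2)$ by a dilation and then any $(\zeta',z')$ with $\Im z'>\abs{\zeta'}^2$ by a suitable Heisenberg translation (choose $R^2=\Im z'-\abs{\zeta'}^2$, then translate); for freeness, if $g\cdot(0,i)=(0,i)$ then comparing first coordinates gives the Heisenberg $\zeta$-parameter $=0$, comparing imaginary parts of the second gives $R=1$, and then the real part gives $x=0$, so $g=\mathrm{id}$. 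I expect the main obstacle to be the bookkeeping in part (i)—carefully extracting, from the single scalar boundary identity, that the off-diagonal blocks vanish—but this is a finite linear-algebra argument with no conceptual difficulty; everything else is routine group computation.
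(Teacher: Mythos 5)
Your parts (i) and (iii) are essentially correct: the block-matrix matching argument for (i) is a legitimate self-contained alternative to the paper, which instead quotes Murakami's description of the linear automorphisms, and (iii) is the same routine verification as in the paper. The genuine gap is in part (ii). You factor an affine automorphism as $\phi=T_{(\zeta_0,z_0)}\circ A$ with $T$ a \emph{pure translation} and $A$ the linear part, and claim that the boundary constraint forces $A\in GL(\Uc_{n+1})$ and that the pure translations preserving $\Uc_{n+1}$ are exactly those ``of Heisenberg form''. Both claims fail whenever the $\zeta$-component is nonzero, because $\Hb_n$ does \emph{not} act by pure translations: the element $(\zeta_0,x_0)$ acts by $(\zeta,z)\mapsto(\zeta+\zeta_0,\,z+x_0+i\abs{\zeta_0}^2+2i\langle \zeta\vert\zeta_0\rangle)$, whose linear part contains a shear in the $z$-coordinate. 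Concretely, for $n=1$, $\zeta_0=1$, $x_0=0$, the automorphism $\phi(\zeta,z)=(\zeta+1,\,z+i+2i\zeta)$ has linear part $A(\zeta,z)=(\zeta,\,z+2i\zeta)$; at $\zeta=-t$, $z=i(t^2+\eps)$ one gets $\Im(z+2i\zeta)-\abs{\zeta}^2=\eps-2t<0$ for large $t$, so $A\notin GL(\Uc_{n+1})$, and similarly the pure translation by $(1,i)$ sends $(t,\,i(t^2+\eps))$ to a point with $\rho=\eps-2t<0$, so it is not an automorphism either. Thus neither factor of your decomposition is an automorphism of $\Uc_{n+1}$, and the ``as in (i)'' degree-matching step does not go through; this decomposition is precisely the nontrivial content of (ii), which the paper obtains from Murakami's Proposition 2.1 rather than by hand.

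The argument can be repaired along the following lines. An injective affine self-map of $\Uc_{n+1}$ has injective (hence invertible) linear part, so $\phi$ is a homeomorphism of $\C^{n}\times\C$ mapping $\Uc_{n+1}$ onto itself and therefore $\partial\Uc_{n+1}$ onto itself. Since $(0,0)\in\partial\Uc_{n+1}$ and $\Hb_n$ acts simply transitively on $\partial\Uc_{n+1}$, choose the unique $h\in\Hb_n$ with $h\cdot(0,0)=\phi(0,0)$; then $h^{-1}\circ\phi$ is an affine automorphism fixing the origin, hence linear, hence an element of $GL(\Uc_{n+1})$ by your part (i). Combined with $\Hb_n\cap GL(\Uc_{n+1})=\Set{\mathrm{id}}$ and the conjugation check that $GL(\Uc_{n+1})$ normalizes $\Hb_n$ (the one piece of your (ii) that is correct as stated), this yields $\Aff(\Uc_{n+1})=\Hb_n\rtimes GL(\Uc_{n+1})$.
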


\begin{proof}
	 Assertion (ii) follows from~\cite[Proposition 2.1]{Murakami}. It is then clear that $G_T$ is the semi-direct product of $\Hb_n$ and the (abelian) group $G_\delta\coloneqq\Set{(\zeta,z)\mapsto (R \zeta, R^2 z)\colon R>0}$ of dilations of $\Uc_{n+1}$, so that it is solvable. Since $\Hb_n$ acts simply transitively on the translates of $\partial \Uc_{n+1}$ and $G_\delta$ acts simply transitively on $\Set{(0,i h)\colon h>0}$, (iii) follows.
	
	Finally, observe that~\cite[Proposition 2.2]{Murakami} shows
        that   $GL(\Uc_{n+1})$   consists of the linear automorphisms of $\C^n\times \C$ of the form $A\times B_\C$ with $A\in GL(\C^n)$ and $B\in GL(\R)$ such that $B\R_+\subseteq \R_+$ and $\norm{A\zeta}^2=B\norm{\zeta}^2$ for every $\zeta\in \C^n$. Then, there is $R>0$ such that $Bx=R^2 x$ for every $x\in \R$, and $A=R U$ for some $U\in U(n)$, whence (ii). 
\end{proof}

\begin{prop}\label{prop:27}
	The group $\Aut(\Uc_{n+1})$ is generated by $\Aff(\Uc_{n+1})$ and the inversion
	\[
	\iota\colon \Uc_{n+1}\ni (\zeta,z)\mapsto \left(-\frac{i
            \zeta}{z}, -\frac 1  z  \right) \in \Uc_{n+1}.
	\]
	In addition, $\Aff(\Uc_{n+1})$ is amenable.
\end{prop}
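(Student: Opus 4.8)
The plan is to prove the two assertions of Proposition~\ref{prop:27} separately, the first by an explicit computation and the second by identifying $\Aff(\Uc_{n+1})$ as a closed subgroup of an amenable group.

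For the generation statement, I would argue as follows. Let $G$ be the subgroup of $\Aut(\Uc_{n+1})$ generated by $\Aff(\Uc_{n+1})$ and $\iota$; clearly $G\subseteq \Aut(\Uc_{n+1})$, so it suffices to show the reverse inclusion. Transport everything to the ball via the Cayley transform $\Cc$: since $\Cc$ conjugates $\Aut(\Uc_{n+1})$ to $\Aut(B_{n+1})$, and since $\Aut(B_{n+1})$ is generated by the maximal compact subgroup $U(n+1)$ (the stabilizer of $0$) together with any single automorphism not fixing $0$ — more precisely, by $U(n+1)$ and the one-parameter group of automorphisms moving $0$ along a geodesic, by the Cartan (polar) decomposition $\Aut(B_{n+1})=KAK$ — it is enough to check that $\Cc^{-1}G\Cc$ contains $U(n+1)$ and at least one such transversal automorphism. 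The rotations $U\in U(n)\subseteq U(n+1)$ come from $GL(\Uc_{n+1})\subseteq \Aff(\Uc_{n+1})$ via Lemma~\ref{lem:2.1}(i). The dilations $(\zeta,z)\mapsto(R\zeta,R^2z)$ and the Heisenberg translations are in $\Aff$ by Lemma~\ref{lem:2.1}(ii); combining the dilation group $G_\delta$ with $\iota$ produces the one-parameter subgroup of $\Aut(\Uc_{n+1})$ whose $\Cc$-conjugate is the geodesic flow fixing the pair of boundary points $\Cc^{-1}(0,i)$ and its antipode, and this together with the already-available $U(n)$ and with $\iota$ itself (which conjugates to an extra element of $U(n+1)$, e.g.\ the involution swapping the relevant coordinate) generates all of $U(n+1)$ and hence all of $\Aut(B_{n+1})$. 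The one computational point to verify carefully is that $\Cc\circ\iota\circ\Cc^{-1}$ is indeed the expected unitary-times-geodesic element of $\Aut(B_{n+1})$; this is a direct substitution using the formulas for $\Cc$ and $\Cc^{-1}$ given above. Alternatively, and perhaps more cleanly, one shows directly on $\Uc_{n+1}$ that the group generated by $\Hb_n$, $G_\delta$, $U(n)$, and $\iota$ acts transitively on $\Uc_{n+1}$ (indeed $G_T$ from Lemma~\ref{lem:2.1}(iii) already does) and that its stabilizer of the point $(0,i)$ contains enough automorphisms — the isotropy of $(0,i)$ in $\Aut(\Uc_{n+1})$ is compact and conjugate via $\Cc^{-1}$ to $U(n+1)$ — to exhaust the full isotropy group; then a standard orbit-stabilizer argument gives $G=\Aut(\Uc_{n+1})$.

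For amenability of $\Aff(\Uc_{n+1})$, the argument is short. By Lemma~\ref{lem:2.1}(ii), $\Aff(\Uc_{n+1})=\Hb_n\rtimes GL(\Uc_{n+1})$, and by Lemma~\ref{lem:2.1}(i), $GL(\Uc_{n+1})\cong \R_{>0}\times U(n)$. Now $\Hb_n$ is nilpotent, hence amenable; $\R_{>0}$ is abelian, hence amenable; and $U(n)$ is compact, hence amenable. Since the class of amenable locally compact groups is closed under taking closed subgroups, quotients, and extensions, the group $GL(\Uc_{n+1})$ is amenable (as a direct product of two amenable groups), and therefore the extension $\Hb_n\rtimes GL(\Uc_{n+1})$ is amenable as well. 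Equivalently, $G_T$ from Lemma~\ref{lem:2.1}(iii) is solvable, hence amenable, and $\Aff(\Uc_{n+1})=G_T\rtimes U(n)$ is an extension of the compact (hence amenable) group $U(n)$ by the amenable group $G_T$, so it is amenable.

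I expect the only genuine obstacle to be the bookkeeping in the generation statement: one must be sure that the composites of $\iota$ with affine maps really do sweep out all of $\Aut(\Uc_{n+1})$, and the cleanest way to see this is via the orbit-stabilizer reduction together with the $KAK$ decomposition of $\Aut(B_{n+1})$, so that the problem collapses to checking that $\Cc^{-1}\iota\,\Cc$ lies outside the isotropy of $0$ in the expected way and that the isotropy subgroup $U(n+1)$ is generated by $U(n)$ and this one extra element — a finite-dimensional Lie-group fact that can be checked by inspecting the action on $\C^{n+1}$. The amenability half is entirely routine given Lemma~\ref{lem:2.1}.
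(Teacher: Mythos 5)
The amenability half of your proposal is correct and is essentially the paper's argument: $\Aff(\Uc_{n+1})=\Hb_n\rtimes GL(\Uc_{n+1})$ with $GL(\Uc_{n+1})\cong \R_{>0}\times U(n)$, and amenability follows from the stability of the class of amenable groups under extensions, using that nilpotent, abelian and compact groups are amenable (the paper cites Pier for exactly these facts, and cites \cite{Arcozzietal} or \cite{Dorfmeister} for the generation statement rather than proving it).

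The generation half, however, has a genuine gap, precisely at the point you flagged as ``the one computational point to verify carefully''. Carrying out that computation with the formulas for $\Cc$ and $\Cc^{-1}$ gives
\[
\bigl(\Cc^{-1}\circ\iota\circ\Cc\bigr)(w,u)=(-w,-u),
\]
i.e.\ the Cayley conjugate of $\iota$ is the central element $-\mathrm{id}\in U(n+1)$. Consequently the subgroup of $U(n+1)$ generated by $U(n)$ (embedded as $U\mapsto \mathrm{diag}(U,1)$, which is all that $\Aff(\Uc_{n+1})$ contributes to the isotropy of $(0,i)$) together with this extra element is only $\Set{\mathrm{diag}(U,\eps)\colon U\in U(n),\ \eps=\pm1}$, of dimension $n^2$, a proper subgroup of $U(n+1)$ even when $n=0$. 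So the key claim in both of your routes --- that the full isotropy group $U(n+1)$ is generated by $U(n)$ and the single element $\Cc^{-1}\iota\,\Cc$ --- is false. The underlying logical slip is that the stabilizer of $(0,i)$ in the group $G=\langle \Aff(\Uc_{n+1}),\iota\rangle$ is not generated by those generators of $G$ that individually fix $(0,i)$: the missing unitaries arise only as words mixing Heisenberg translations, dilations and $\iota$ that fix $(0,i)$ without any factor doing so (already for $n=0$, rotations of the disc are obtained from identities of the type $\begin{psmallmatrix}1&a\\0&1\end{psmallmatrix}\iota\begin{psmallmatrix}1&b\\0&1\end{psmallmatrix}\iota\begin{psmallmatrix}1&c\\0&1\end{psmallmatrix}$, not from products of stabilizing generators). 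The orbit--stabilizer reduction itself is sound, but to close it you would need a genuinely different ingredient, e.g.\ the rank-one Bruhat decomposition $\Aut(\Uc_{n+1})=\Aff(\Uc_{n+1})\cup \Aff(\Uc_{n+1})\,\iota\,\Aff(\Uc_{n+1})$, or an explicit verification that such mixed words exhaust the isotropy group; alternatively, simply invoke the references the paper uses.
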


Recall that a group $H$ is amenable if it admits a right-invariant
mean, that is, a linear functional $\mathtt{m}\colon \ell^\infty(H)\to
\C$ such that $\mathtt{m}(\chi_H)=1$ and
$\mathtt{m}(f(\,\cdot\,h))=\mathtt{m}(f)$ for every $f\in
\ell^\infty(H)$ and for every $h\in H$. See~\cite{Pier} for several
characterizations of amenability and a thorough treatment of this
subject. 

For future reference, let us also note that the  complex Jacobian $J\iota$ of $\iota$ is
\begin{equation}\label{eq:4}
	(J\iota)(\zeta,z)= \frac{1}{i^n z^{n+2}} 
\end{equation}
for every $(\zeta,z)\in D$.

\begin{proof}
	The first assertion follows, for example, from~\cite[Lemma
        2.1]{Arcozzietal}, or~\cite[Theorem 6.1]{Dorfmeister}.
	Concerning the second assertion, observe that (i) and (ii) of
  Lemma~\ref{lem:2.1}  show  that   $\Aff(\Uc_{n+1})$ is the
  semi-direct product of the  nilpotent Lie group $\Hb_n$ and the
  group $GL(\Uc_{n+1})$, which in turn is the direct product of an
  abelian group (the group of the dilations) and the compact group
  $U(n)$. The assertion then follows from~\cite[Propositions 12.1 and
  13.4, and Corollary 13.5]{Pier}. 
\end{proof}

\begin{deff}   
We denote by $\widetilde \Aut(\Uc_{n+1})$, or simply $\widetilde \Aut$ if
there is no fear of confusion, the universal covering of
$\Aut(\Uc_{n+1})$.  We endow $\widetilde \Aut(\Uc_{n+1})$ with the action
on $\Uc_{n+1}$ induced by the natural action of $\Aut(\Uc_{n+1})$ on
$\Uc_{n+1}$. For every $s\in \R$, consider the continuous
representation $\widetilde U_s$ of $\widetilde \Aut(\Uc_{n+1})$ in
$\Hol(\Uc_{n+1})$  defined by 
\begin{equation}\label{Us:def}
\widetilde U_s(\phi) f(\zeta,z)\coloneqq f(\phi^{-1}(\zeta,z))
(J\phi^{-1})^{s/(n+2)} (\zeta,z),
\end{equation}
where  $(\phi,(\zeta,z))\mapsto (J\phi^{-1})^{s/(n+2)}(\zeta,z)$ may
be (unambiguously) defined as a continuous function on the simply connected 
manifold $\widetilde \Aut(\Uc_{n+1})\times \Uc_{n+1}$, in such a way that
$(J I^{-1})^{s/(n+2)}(0, i)=1$, where $I$ denotes the identity in
$\widetilde \Aut(\Uc_{n+1})$.\footnote{The exponent $s/(n+2)$ may
  seem peculiar at first, but may be justified observing that
  $\widetilde U_s(\delta_R) f =(f\circ \delta_R^{-1})R^{-s}$ for every
  $R>0$ and for every $s\in \R$, identifying the (simply connected)
  group of the dilations $\delta_R\colon (\zeta,z)\mapsto (R\zeta, R^2
  z)$   with a subgroup of $\widetilde \Aut(\Uc_{n+1})$. Besides that,
  this definition is quite common in the literature since $n+2$ may be
  interpreted as the `genus' of $\Uc_{n+1}$ as a symmetric domain.}
 In particular, $(J\phi^{-1})^{s/(n+2)}$ is holomorphic on
$\Uc_{n+1}$ for every $\phi \in \widetilde \Aut(\Uc_{n+1})$ and for
every $s\in \R$. 
\end{deff}

Notice that the subgroup 
$G_T$ defined in  Lemma~\ref{lem:2.1} (iii) is simply connected, so
that it may be identified with a subgroup of $\widetilde \Aut(\Uc_{n+1})$. Then, $\widetilde U_s$ induces a representation $U_s$
of $G_T$. Observe that 
\[
U_s(\phi) f (\zeta,z)= f (\phi^{-1}(\zeta,z)) \abs{(J\phi^{-1})^{s/(n+2)}(0,i)}
\]
for every $\phi\in G_T$, for every $f\in \Hol(\Uc_{n+1})$, and for
every $(\zeta,z)\in \Uc_{n+1}$, and that the same expression may be
employed to define a continuous representation $U_s$ of $\Aff(\Uc_{n+1})$ in
$\Hol(\Uc_{n+1})$.\footnote{Notice that, as long as $\phi\in G_T$, $J\phi^{-1}$ is positive on $\Uc_{n+1}$, so that the absolute value is redundant. If we extend $U_s$ to $\Aff(\Uc_{n+1})$, however, then the absolute value is necessary to make sure that $U_s$ is well defined  when $\Aff(\Uc_{n+1})\neq G_T$, that is, when $n>0$.}

\subsection{Bergman Spaces}\label{sec:2:3}

\begin{deff}
	For every $p\in [1,\infty]$ and for every $s\in \R$, we define
	\[
	L^p_s(\Uc_{n+1})\coloneqq \Set{f \text{ measurable}\colon \int_{\Uc_{n+1}} \abs{f(\zeta,z)}^p (\Im z-\abs{\zeta}^2)^{p s-1}\,\dd (\zeta,z)<\infty  },
	\]
	with the obvious modification when $p=\infty$,
         that is,
\[
	L^\infty_s(\Uc_{n+1})\coloneqq \Set{f \text{ measurable}\colon
 (\zeta,z)  \mapsto (\Im z-\abs{\zeta}^2)^s {f(\zeta,z)} \in L^\infty   },       
\]
       and $A^p_s(\Uc_{n+1})\coloneqq L^p_s(\Uc_{n+1})\cap \Hol(\Uc_{n+1})$.
\end{deff}

Then, $A^p_s(\Uc_{n+1})$ is a Banach space, and embeds continuously into $\Hol(\Uc_{n+1})$. It is a   Hilbert space when $p=2$. In addition, $A^p_s(\Uc_{n+1})\neq \Set{0}$ if and only if $s>0$ or $p=\infty$ and $s\Meg 0$.

The reproducing kernel of $A^2_s(\Uc_{n+1})$, $s>0$, (the `weighted Bergman kernel') is
\begin{equation}\label{eq:9}
K_s\colon \Uc_{n+1}\times \Uc_{n+1}\ni ((\zeta,z),(\zeta',z'))\mapsto c_s \left(\frac{z-\overline{z'}}{2 i}-\langle \zeta\vert\zeta'\rangle  \right)^{-(n+1+2s)},
\end{equation}
where $c_s=\frac{(2s+n)\cdots(2s)}{4 \pi^{n+1}}$. Notice that the power is well
defined since $\Re \left(\frac{z-\overline{z'}}{2 i}-\langle
  \zeta\vert\zeta'\rangle \right)>0 $ for every
$(\zeta,z),(\zeta',z')\in \Uc_{n+1}$.   For simplicity of notation,
we also write
\begin{equation}\label{rho:def}
\rho(\zeta,z) = \Im z -\abs{\zeta}^2
\end{equation}
so that $K_s((\zeta,z),(\zeta,z))= c_s \rho(\zeta,z)^{-(n+1+2s)} $.  

We denote by $\Pc_s$ the corresponding projector (the `weighted Bergman projector'), so that
\[
\Pc_s f(\zeta,z)= \int_{\Uc_{n+1}} f(\zeta',z') K_s((\zeta,z),(\zeta',z')) (\Im z'-\abs{\zeta'}^2)^{2 s-1}\,\dd(\zeta',z')
\]
for every $f\in C_c(\Uc_{n+1})$ (say).

We shall now describe the spaces $\Pc_{s'}(L^{p}_s(\Uc_{n+1}))$ for suitable values of $s,s'$. In order to do that, we shall need to construct some Besov spaces of analytic type on $\Hb_n$ and an associated extension operator.

\begin{deff}
	For every compact subset $K$ of $\R_+^*$, we define $\Sc_{\R_+^*}(\Hb_n,K)$ as the image of   the mapping
	\[
	\Gc\colon C^\infty_c(K)\ni \phi\mapsto \bigg[(\zeta,x)\mapsto \int_{0}^{+\infty} \phi(\lambda) \ee^{ \lambda(i x-\abs{\zeta}^2)}\,\dd \lambda \bigg]\in \Sc(\Hb_n),
	\]
	with the induced topology. We then define $\Sc_{\R_+^*}(\Hb_n)$ and $\Sc_{\R_+^*,L}(\Hb_n)$ as the inductive limits of the spaces $\Sc_{\R_+^*}(\Hb_n,K)$ and $\Sc(\Hb_n)*\Sc_{\R_+^*}(\Hb_n,K)$, respectively, as $K$ runs through the set of compact subsets of $\R_+^*$.
	
	We define $\Sc'_{\R_+^*,L}(\Hb_n)$ as the dual of the conjugate of $\Sc_{\R_+^*,L}(\Hb_n)$.
\end{deff}

Observe that $\Sc_{\R_+^*}(\R)=\Sc_{\R_+^*,L}(\R)=\Fc^{-1}(C^\infty_c(\R_+^*))$ when $n=0$. In this case, $\Gc$ is essentially the inverse Fourier transform. Also in the general case one may interpret $\Gc$ (essentially) as the inverse Fourier transform, cf.~\cite[Chapter 4]{CalziPeloso} for more details.

In order to better explain the meaning of all these constructions, let us observe that $\Hb_n$, identified with the boundary of $\Uc_{n+1}$, inherits the structure of a CR manifold. In other words, the complex tangent spaces (that is, the largest complex vector subspaces of the real tanget spaces) of $\partial \Uc_{n+1}$ have all the same complex dimension (namely, $n$). This is related to the fact that the left translations of $\Hb_n$ act on $\partial \Uc_{n+1}$ by  affine biholomorphisms. 
This CR structure then allows to define CR functions on $\Hb_n$, that
is, functions which satisfy  the  tangential   Cauchy--Riemann
equations. Then, the space $\Sc_{\R_+^*,L}(\Hb_n)$ may be equivalently
characterized as the space of CR Schwartz functions on $\Hb_n$ whose
`spectrum' is contained in $\R_+^*$, where the spectrum of a CR $f\in
\Sc(\Hb_n)$ may be defined as $\overline{\bigcup_{\zeta\in \C^n}
  \Supp{\Fc(f(\zeta,\,\cdot\,))}}$, where $\Fc$ denotes the
(Euclidean) Fourier transform in the second variable.   It turns out that the elements of $\Sc_{\R_+^*,L}(\Hb_n)$ may also be characterized as the restrictions to $\Hb_n\cong \partial \Uc_{n+1}$ of a class of entire functions on $\C^{n+1}$ which satisfy suitable growth and decay conditions (in a Paley--Wiener--Schwartz fashion). The elements of $\Sc_{\R_+^*}(\Hb_n)$ then correspond to the entire functions which depend only on the last variable. Cf.~\cite{PWS} for a more detailed discussion of this topic.

We denote by $\langle\,\cdot\,\vert \,\cdot\,\rangle$ the sesquilinear pairing  between $\Sc'_{\R_+^*,L}(\Hb_n)$ and $\Sc_{\R_+^*,L}(\Hb_n)$, as well as the conjugate of the pairing between $\Sc_{\R_+^*,L}(\Hb_n)$ and $\Sc'_{\R_+^*,L}(\Hb_n)$.

\begin{deff}
	Take $p\in [1,\infty]$ and $s\in \R$. We define $B^{s}_p(\Hb_n,\R_+^*)$ as the space of $u\in \Sc_{\R_+^*,L}(\Hb_n)$ such that
	\[
	\sum_{j\in \Z} 2^{ p s j} \norm{u* \Gc(\phi_j)}_{L^p(\Hb_n)}^p<\infty,
	\]
	endowed with with the corresponding norm (with the obvious modification when $p=\infty$), where $(\phi_j)$ is a family of elements of $C^\infty_c(\R_+^*)$  such that the $\phi_j(2^{-j}\,\cdot\,) $ stay in a bounded subset of $C^\infty_c(\R_+^*)$ and $\sum_{j\in \Z} \phi_j\Meg 1$ on $\R_+^*$.
\end{deff}

We remark   that this definition does not depend on the choice of $(\phi_j)$ (cf.~\cite[Lemma 4.14]{CalziPeloso}). In addition, $B^{s}_p(\Hb_n,\R_+^*)$ may be interpreted as a suitable closed subspace of a natural Besov space on $\Hb_n$, cf.~\cite[Sections 7 and 8]{Besov} for more details.
Notice that, if $p<\infty$, then $\Sc_{\R_+^*,L}(\Hb_n)$ is dense in $B^s_p(\Hb_n,\R_+^*)$, and the sesquilinear pairing between  $\Sc_{\R_+^*,L}(\Hb_n)$ and $\Sc'_{\R_+^*,L}(\Hb_n)$ induces a canonical sesquilinear pairing between $B^s_p(\Hb_n,\R_+^*)$ and $B^{-s}_{p'}(\Hb_n,\R_+^*)$ which identifies $B^{-s}_{p'}(\Hb_n,\R_+^*)$ with the dual of $B^s_p(\Hb_n,\R_+^*)$ (cf.~\cite[Theorem 4.23]{CalziPeloso}). We may then consider a canonical sesquilinear form between $B^s_p(\Hb_n,\R_+^*)$ and $B^{-s}_{p'}(\Hb_n,\R_+^*)$ for every $p\in [1,\infty]$ and for every $s\in \R$.

\begin{deff}
	Take $p\in [1,\infty]$  and $s>-\frac{n+1}{p}$. Then, $S_{(\zeta,z)}\colon (\zeta',x')\mapsto \frac{n!}{4\pi^{n+1}}\left(\frac{x'+i \abs{\zeta'}^2-\overline z}{2 i}-\langle \zeta'\vert \zeta\rangle  \right)^{-n-1}$ belongs to $B^{s}_{p'}(\Hb_n,\R_+^*)$ for every $(\zeta,z)\in \Uc_{n+1}$ and the linear mapping
	\[
	\Ec\colon B^{-s}_p(\Hb_n,\R_+^*)\ni u \mapsto [(\zeta,z)\mapsto \langle u\vert S_{(\zeta,z)}\rangle]\in A^\infty_{s+(n+1)/p}(\Uc_{n+1})
	\]
	is well defined and continuous. We define $\widetilde A^p_s(\Uc_{n+1})$ as its image, endowed with the corresponding topology.
\end{deff}

Notice that $S_{(\zeta,z)}$ is the boundary value of the Cauchy--Szeg\H o kernel on $\Uc_{n+1}$, that is, the reproducing kernel of the Hardy space $H^2(\Uc_{n+1})$. Thus, integrating the boundary values of an element $f$ of $H^2(\Uc_{n+1})$ against $S_{(\zeta,z)}$ gives $f(\zeta,z)$, so that  the extension operator $\Ec$ is naturally defined.

We endow $\Uc_{n+1}$ with an $\Aut(\Uc_{n+1})$-invariant Riemannian
metric (e.g., the Bergman metric) and the associated distance. For
every $\delta>0$, we say that a family $(\zeta_j,z_j)$ of elements of
$\Uc_{n+1}$ is a $\delta$-lattice if it is maximal for the property
that $d((\zeta_j,z_j),(\zeta_{j'},z_{j'}))\Meg 2\delta$ for every
$j\neq j'$. We may then state the following result.

\begin{prop}\label{prop:23}
	Take  $p\in [1,\infty]$ and $s>-\frac{n+1}{p}$. Then,  the following hold:
	\begin{enumerate}
		\item[\textnormal{(i)}] $P_{s'}$ induces a continuous linear mapping of $L^p_s(\Uc_{n+1})$ onto $\widetilde A^p_s(\Uc_{n+1})$ for every $s'> \frac{s_+}{2}$;
		
		\item[\textnormal{(ii)}]  $A^p_s(\Uc_{n+1})=\widetilde A^p_s(\Uc_{n+1})$ for every $s>0$;
		
		\item[\textnormal{(iii)}] for every $s'> \frac s 2 $ and for every $\delta>0$, and for every $\delta$-lattice $(\zeta_j,z_j) $    on $\Uc_{n+1}$, the mapping
		\[
		\ell^p(J)\ni \lambda \mapsto \sum_j \lambda_j K_{s'}(\,\cdot\,,(\zeta_j,z_j)) \rho(\zeta_j,z_j)^{(n+1)/p'+2 s'-s}\in \widetilde A^{p}_s(\Uc_{n+1})
		\]
		is well-defined and continuous, with locally uniform convergence of the sum when $p=\infty$, and onto when $\delta$ is sufficiently small;
		
		\item[\textnormal{(iv)}] the mapping $\widetilde A^p_s(\Uc_{n+1})\ni f \mapsto \partial_2^k f\in \widetilde A^p_{s+k}(\Uc_{n+1}) $ is an isomorphism for every $k\in \N$;
		
	 \item[\textnormal{(v)}] if $p<\infty$, then 
		\[
		\begin{split}
		\widetilde A^p_s(\Uc_{n+1})&=\Set{f\in A^\infty_{s+(n+1)/p}(\Uc_{n+1})\colon \partial^k_2 f\in A^p_{s+k}(\Uc_{n+1})}\\
			&=\Set{f\in A^\infty_{s+(n+1)/p,0}(\Uc_{n+1})\colon \partial^k_2 f\in A^p_{s+k}(\Uc_{n+1})}
		\end{split}
		\]
		for every $k\in\N$ such that $s+k>0$, where $A^\infty_{s',0}(\Uc_{n+1})\coloneqq \Set{f\in \Hol(\Uc_{n+1})\colon \rho^{s'} f \in C_0(D)  }$, endowed with the norm of $A^\infty_{s'}(\Uc_{n+1})$, for every $s'>0$.
	\end{enumerate}
\end{prop}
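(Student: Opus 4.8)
The plan is to prove Proposition~\ref{prop:23} by reducing everything to the Besov-space machinery on $\Hb_n$ via the extension operator $\Ec$ and the intertwining between holomorphic differentiation $\partial_2$ on $\Uc_{n+1}$ and Euclidean differentiation (or, on the Fourier side, multiplication by the frequency $\lambda$) on $\Hb_n$. The underlying principle is that $\Ec$ identifies $B^{-s}_p(\Hb_n,\R_+^*)$ with $\widetilde A^p_s(\Uc_{n+1})$ \emph{by definition}, and that under this identification the operator $\partial_2$ corresponds (up to a positive constant and a shift in the parameter) to convolution with a fixed element of $\Sc'_{\R_+^*,L}(\Hb_n)$ whose Fourier multiplier is essentially $\lambda$; since multiplication by $2^{j}$ at frequency scale $2^j$ is exactly what the norm of $B^s_p$ measures, one gets assertion~(iv) for free, and it is the keystone for everything else.

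First I would establish~(iv): one checks directly from the formula for $S_{(\zeta,z)}$ that $\partial_2^k S_{(\zeta,z)}$ is a constant multiple of the kernel $S$ with $n+1$ replaced by $n+1+k$, so that $\partial_2^k\circ\Ec = c\,\Ec'\circ M_k$ where $\Ec'$ is the extension operator at parameter shifted by $k$ and $M_k$ is, on the spectral side, multiplication by a power of $\lambda$; the latter is an isomorphism $B^{-s}_p(\Hb_n,\R_+^*)\to B^{-s-k}_p(\Hb_n,\R_+^*)$ because it shifts the Littlewood--Paley weights by a fixed bounded-above-and-below factor on each dyadic block. Next, for~(ii), I would invoke Proposition~\ref{prop:24}-type identifications already available, or more precisely the known fact (Plancherel/Paley--Wiener on $\Hb_n$) that for $s>0$ the Bergman space $A^p_s(\Uc_{n+1})$ is exactly the image under $\Ec$ of the appropriate Besov space, i.e.\ the two definitions of $\widetilde A^p_s$ and $A^p_s$ coincide precisely in the convergent range $s>0$. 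For~(i), the point is that the weighted Bergman projector $\Pc_{s'}$ is, after transferring to $\Hb_n$, a Fourier multiplier operator whose symbol is a ratio of Gamma-type factors that is bounded (and bounded below) on $\R_+^*$ precisely when $s'>\tfrac{s_+}{2}$, so $\Pc_{s'}$ maps $L^p_s(\Uc_{n+1})$ boundedly onto $\widetilde A^p_s(\Uc_{n+1})$; surjectivity follows since $\Pc_{s'}$ is a projection whose range contains $\Ec(\Sc_{\R_+^*,L})$, which is dense.

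For~(iii), the sampling statement, I would combine (i) with the standard atomic decomposition: a $\delta$-lattice on $\Uc_{n+1}$ (with the Bergman metric) projects to a lattice adapted to the dyadic structure of $\R_+^*\times\Hb_n$, and the functions $K_{s'}(\,\cdot\,,(\zeta_j,z_j))$ are, up to the stated normalizing powers of $\rho(\zeta_j,z_j)$, exactly the molecules dual to a smooth partition of unity; boundedness of the synthesis map $\ell^p(J)\to\widetilde A^p_s$ then follows from the $B^s_p$ atomic decomposition on $\Hb_n$ (as in~\cite{CalziPeloso}), and surjectivity for small $\delta$ follows from a Neumann-series/perturbation argument comparing the synthesis-then-analysis composite to the identity. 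Finally~(v), for $p<\infty$: the inclusion $\widetilde A^p_s\subseteq\{f: \partial_2^k f\in A^p_{s+k}\}$ is immediate from~(iv) and~(ii) (once $s+k>0$), together with the elementary containment $\widetilde A^p_s\subseteq A^\infty_{s+(n+1)/p}$ built into the definition of $\Ec$; for the reverse, given $f$ with $\partial_2^k f\in A^p_{s+k}=\widetilde A^p_{s+k}$, one writes $f = (\partial_2^k)^{-1}(\partial_2^k f)$ using the isomorphism in~(iv) and checks that the two candidate primitives differ by a polynomial in $z$ of degree $<k$ which, by the $A^\infty_{s+(n+1)/p}$ growth restriction, must vanish; the refinement to $A^\infty_{\,\cdot\,,0}$ uses that $\Sc_{\R_+^*,L}(\Hb_n)$ is dense in $B^{-s}_p$ when $p<\infty$, so $\Ec$ lands in the closure of $\Ec(\Sc_{\R_+^*,L})\subseteq A^\infty_{s+(n+1)/p,0}$.

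I expect the main obstacle to be~(i), specifically the sharp boundedness range $s'>\tfrac{s_+}{2}$ for the Bergman projector on $L^p_s$: proving boundedness requires a careful estimate of the transferred Fourier-multiplier symbol (a quotient of Gamma functions in $\lambda$), and proving onto-ness requires knowing $\Pc_{s'}$ acts as the identity on $\widetilde A^p_s$, which in turn rests on the integral representation of functions in $\widetilde A^p_s$ by their boundary data against $S_{(\zeta,z)}$ — a point where one must be careful about convergence of the defining integral precisely in the delicate range $-\tfrac{n+1}{p}<s\le 0$. The other delicate point is the surjectivity in~(iii) for small $\delta$, where the perturbation argument needs quantitative control, locally uniform in the lattice, of the off-diagonal decay of $K_{s'}$ with respect to the Bergman distance; this is routine but bookkeeping-heavy, so I would isolate it as a lemma (or cite the corresponding result from~\cite{CalziPeloso}) rather than redo it.
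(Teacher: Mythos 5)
Your plan reconstructs, in outline, exactly the machinery that the paper relies on: the paper's own proof of (i)--(iv) consists of citations to the Besov-space framework of~\cite{CalziPeloso} and~\cite{Paralipomena} (boundary Besov spaces, the extension operator $\Ec$, the intertwining of $\partial_2^k$ with a spectral shift, atomic decomposition via $\delta$-lattices), and the only step argued in the paper is (v), where your argument --- embed $\widetilde A^p_s$ into $A^\infty_{s+(n+1)/p,0}$ by density of a nice subspace, then use (iv) and (ii), with the reverse inclusion handled by noting that the two primitives differ by a polynomial in $z$ of degree $<k$ which is incompatible with the $A^\infty_{s+(n+1)/p}$ growth bound --- matches the paper's reasoning and is sound. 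Items (ii), (iii), (iv) as you describe them are likewise fine, modulo the fact that you are essentially re-deriving (or, as you say, would cite) the corresponding statements of~\cite{CalziPeloso}.

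The genuine problem is your proposed mechanism for (i). The projector $\Pc_{s'}$ in (i) acts on \emph{arbitrary} elements of $L^p_s(\Uc_{n+1})$, which are merely measurable functions on the domain with no holomorphy and no spectral (CR) structure; after ``transferring to $\Hb_n$'' there is simply no Fourier multiplier attached to this action, so boundedness of a Gamma-quotient symbol cannot yield the estimate $\norm{\Pc_{s'}f}_{\widetilde A^p_s}\meg C\norm{f}_{L^p_s}$. The Gamma-type factors you have in mind arise only when $\Pc_{s'}$ is applied to holomorphic functions (reproducing-type identities such as $\Pc_{s'}K_{s''}(\,\cdot\,,(\zeta_0,z_0))=c\,K_{s''}(\,\cdot\,,(\zeta_0,z_0))$), i.e.\ they belong to the \emph{surjectivity} half of (i), where one checks that $\Pc_{s'}$ acts as the identity on a dense subspace of $\widetilde A^p_s(\Uc_{n+1})$. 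The boundedness half requires genuine weighted integral-operator estimates over $\Uc_{n+1}$ (Schur-type tests with power weights $\rho^\alpha$, together with duality arguments in the delicate range $-\frac{n+1}{p}<s\meg 0$ and at the endpoints $p=1,\infty$), and it is precisely this that the condition $s'>\frac{s_+}{2}$ calibrates; this is the content of the cited~\cite[Theorem 4.5]{Paralipomena} and~\cite[Corollary 5.11]{CalziPeloso}. So as written your step for (i) would not go through: either carry out the Schur-type estimates for the kernel $\abs{K_{s'}((\zeta,z),(\zeta',z'))}\rho(\zeta',z')^{2s'-1}$ on the weighted spaces, or cite the reference, as the paper does.
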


Here,  and throughout the paper, we set 
\[
\partial_2 f(\zeta,z)=\frac{\dd}{\dd z} f(\zeta,z)
\]
for every $f\in \Hol(\Uc_{n+1})$ and for every $(\zeta,z)\in \Uc_{n+1}$.
Notice that, when $n=0$, this leads to an abuse of notation, as it is more natural to consider $\C^n\times \C$ as $\C$ (so that there is no `second' variable), than $\Set{0}\times \C$.

\begin{proof}
	(i) This follows from~\cite[Theorem 4.5]{Paralipomena} and~\cite[Corollary 5.11]{CalziPeloso}.
	
	(ii) This follows from~\cite[Corollary 5.11]{CalziPeloso}.
		
	 (iii) This follows from~\cite[Theorem 4.5]{Paralipomena} and~\cite[Corollary 5.11]{CalziPeloso}.

	(iv) This follows from~\cite[Proposition 5.13]{CalziPeloso}.
	
	 (v)  Observe that $\widetilde A^p_s(\Uc_{n+1})\cap A^2_{1/2}(\Uc_{n+1})$ is dense in $\widetilde A^p_s(\Uc_{n+1})$ and contained in $A^\infty_{s+(n+1)/p,0}(\Uc_{n+1})$ (cf.~\cite[Theorem 4.23 and Propositions 5.4 and 3.9]{CalziPeloso}), so that $\widetilde A^p_s(\Uc_{n+1})$ embeds continuously into $A^\infty_{s+(n+1)/p,0}(\Uc_{n+1})$. Thus, the assertion follows by means of (ii) and (iv), applied to $\widetilde A^p_s(\Uc_{n+1})$ and $\widetilde A^\infty_{s+(n+1)/p}(\Uc_{n+1})$.
\end{proof}

For future reference we state the following elementary lemma.

\begin{lem}\label{lem:20}
	Take $p\in [1,\infty]$ and $s\Meg s'>-\frac{n+1}{p}$. Then, for every $h>0$, the mapping
	\[
	\widetilde A^p_s(\Uc_{n+1})\ni f \mapsto f(\,\cdot\,+(0,ih))\in \widetilde A^{p}_{s'}(\Uc_{n+1})
	\]
	is well defined and continuous.
\end{lem}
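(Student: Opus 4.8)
The plan is to reduce the statement, via the derivative isomorphisms of Proposition~\ref{prop:23}(iv), to an elementary inequality between ordinary weighted Bergman spaces, and then to prove the latter by a change of variables combined with the sub-mean value inequality for $\abs{F}^p$. First observe that $f\mapsto f(\,\cdot\,+(0,ih))$ is a well-defined linear operator on $\Hol(\Uc_{n+1})$: if $(\zeta,z)\in\Uc_{n+1}$ then $\rho(\zeta,z+ih)=\rho(\zeta,z)+h>0$, so $(\zeta,z+ih)\in\Uc_{n+1}$; and by the chain rule this operator commutes with $\partial_2^k$ for every $k\in\N$. Fix $k\in\N$ with $k>\max(0,-s')$; since $s\Meg s'$ this also gives $k>-s$, so that $\sigma\coloneqq s+k>0$ and $\sigma'\coloneqq s'+k>0$, with still $\sigma\Meg\sigma'$. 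By Proposition~\ref{prop:23}(iv) and~(ii), $\partial_2^k$ is an isomorphism of $\widetilde A^p_s(\Uc_{n+1})$ onto $A^p_\sigma(\Uc_{n+1})$ and of $\widetilde A^p_{s'}(\Uc_{n+1})$ onto $A^p_{\sigma'}(\Uc_{n+1})$, and it intertwines $f\mapsto f(\,\cdot\,+(0,ih))$ with the same operation on $A^p_\sigma(\Uc_{n+1})$. Hence it suffices to show that $F\mapsto F(\,\cdot\,+(0,ih))$ maps $A^p_\sigma(\Uc_{n+1})$ continuously into $A^p_{\sigma'}(\Uc_{n+1})$ whenever $\sigma\Meg\sigma'>0$ and $p\in[1,\infty]$.

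For this, the change of variables $(\zeta,w)=(\zeta,z+ih)$ gives
\[
\norm{F(\,\cdot\,+(0,ih))}_{A^p_{\sigma'}(\Uc_{n+1})}^p=\int_{\Set{\rho(\zeta,w)>h}}\abs{F(\zeta,w)}^p\big(\rho(\zeta,w)-h\big)^{p\sigma'-1}\,\dd(\zeta,w)
\]
(with the usual modification when $p=\infty$). Split $\Set{\rho>h}$ into $\Set{\rho>2h}$ and the shell $\Set{h<\rho\meg2h}$. On the former, $\rho-h\Meg\rho/2$ and $\rho^{p(\sigma'-\sigma)}\meg(2h)^{p(\sigma'-\sigma)}$ (this is where $\sigma\Meg\sigma'$ enters), so $(\rho-h)^{p\sigma'-1}\lesssim_h\rho^{p\sigma-1}$ and the corresponding piece is $\lesssim_h\norm{F}_{A^p_\sigma(\Uc_{n+1})}^p$. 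On the shell, the plurisubharmonicity of $\abs{F}^p$ yields, for a Bergman ball $B$ of a small fixed radius $\delta$, the estimate $\abs{F(\zeta,w)}^p\lesssim_h\int_{B((\zeta,w),\delta)}\abs{F}^p\rho^{p\sigma-1}\,\dd(\zeta'',w'')$ (on such a ball both the Lebesgue measure and the weight $\rho^{p\sigma-1}$ are comparable to constants depending only on $h$); by Tonelli's theorem and the symmetry of the Bergman distance, the shell integral is then $\lesssim_h M_h\,\norm{F}_{A^p_\sigma(\Uc_{n+1})}^p$, where $M_h\coloneqq\sup\int_{B((\zeta'',w''),\delta)}(\rho(\zeta,w)-h)_{+}^{p\sigma'-1}\,\dd(\zeta,w)$, the supremum being effectively over a bounded range of heights $\rho(\zeta'',w'')\asymp h$. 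Finally $M_h<\infty$ because $(\rho-h)_{+}^{p\sigma'-1}$ is locally integrable (here $\sigma'>0$ is used), $B((\zeta'',w''),\delta)$ is relatively compact, and the quantity is invariant under the measure- and $\rho$-preserving action of $\Hb_n$, so only a compact set of centres is relevant. For $p=\infty$ one argues similarly, using the pointwise bound $\abs{F(\zeta,w)}\meg C\rho(\zeta,w)^{-\sigma}\norm{F}_{A^\infty_\sigma(\Uc_{n+1})}$ and again $\sigma'>0$.

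The only delicate point is the shell estimate, namely controlling, uniformly in the centre, the integral of the weight $(\rho-h)_{+}^{p\sigma'-1}$, which degenerates along the interior hypersurface $\Set{\rho=h}$; the subharmonicity of $\abs{F}^p$ and the homogeneity of $\Uc_{n+1}$ take care of this. I note that the result, and the precise role of the hypothesis $s\Meg s'$, can alternatively be read off from the extension operator $\Ec$: writing $f=\Ec u$ with $u\in B^{-s}_p(\Hb_n,\R_+^*)$ and using $\overline{z+ih}=\overline z-ih$, one checks that $S_{(\zeta,z+ih)}$ is obtained from $S_{(\zeta,z)}$ by adding $h/2$ to its argument, so that $f(\,\cdot\,+(0,ih))=\Ec(T_h u)$ with $T_h$ a fixed convolution operator on $\Sc'_{\R_+^*,L}(\Hb_n)$ acting on the spectral parameter $\lambda$ by a bounded multiplier comparable to $\ee^{-\lambda h/2}$; such a $T_h$ maps $B^{-s}_p(\Hb_n,\R_+^*)$ continuously into $B^{-s'}_p(\Hb_n,\R_+^*)$ as soon as $s\Meg s'$ (the exponential decay handles the high frequencies, and $s\Meg s'$ is exactly what is needed for the low ones), whence the claim, since by construction $\Ec$ realises the spaces $\widetilde A^p_s(\Uc_{n+1})$ as topological quotients of the $B$-spaces.
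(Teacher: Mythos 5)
Your argument is correct in substance and takes a genuinely different route from the paper. The paper works entirely on the Besov-space side: it treats $s=s'$ by citing the translation invariance of the spaces $\widetilde A^p_s(\Uc_{n+1})$, and for $s>s'$ it observes that $S_{(0,ih)}\in B^{s-s'}_1(\Hb_n,\R_+^*)$, invokes a convolution theorem to see that $u\mapsto u*S_{(0,ih)}$ maps $B^{-s}_p(\Hb_n,\R_+^*)$ continuously into $B^{-s'}_p(\Hb_n,\R_+^*)$, and concludes from the identity $(\Ec u)(\,\cdot\,+(0,ih))=\Ec(u*S_{(0,ih)})$ --- which is precisely the alternative you sketch in your closing remark. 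Your main proof instead transfers the problem, through parts (ii) and (iv) of Proposition~\ref{prop:23}, to the estimate $A^p_\sigma(\Uc_{n+1})\to A^p_{\sigma'}(\Uc_{n+1})$ with $\sigma\Meg\sigma'>0$, proved by a change of variables, the elementary bound off the shell $\Set{h<\rho\meg 2h}$, and a sub-mean-value/Tonelli argument on the shell, where the translated weight $(\rho-h)^{p\sigma'-1}$ degenerates but remains locally integrable since $p\sigma'-1>-1$; the $\Hb_n$-invariance indeed reduces the supremum defining $M_h$ to a compact family of centres. This is more elementary and self-contained (it avoids the convolution machinery of the companion papers), at the price of being less structural; the hypothesis $s\Meg s'$ enters in both proofs in the expected place (positivity of the order of $S_{(0,ih)}$ there, the inequality $\rho^{p(\sigma'-\sigma)}\meg(2h)^{p(\sigma'-\sigma)}$ here).

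One step of your reduction should be made explicit. From $\partial_2^k\bigl[f(\,\cdot\,+(0,ih))\bigr]=(\partial_2^kf)(\,\cdot\,+(0,ih))\in A^p_{\sigma'}(\Uc_{n+1})$ and the isomorphism $\partial_2^k\colon\widetilde A^p_{s'}(\Uc_{n+1})\to A^p_{\sigma'}(\Uc_{n+1})$ you only obtain some $g\in\widetilde A^p_{s'}(\Uc_{n+1})$ with the same $k$-th derivative; a priori $f(\,\cdot\,+(0,ih))-g$ could be a non-zero element of $\ker\partial_2^k$ (a polynomial in $z$ with coefficients holomorphic in $\zeta$), so ``it suffices to show'' is not automatic for the well-definedness claim when $p<\infty$. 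Two quick fixes: either invoke (v) of Proposition~\ref{prop:23}, checking in addition that $f(\,\cdot\,+(0,ih))\in A^\infty_{s'+(n+1)/p}(\Uc_{n+1})$, which follows from $\widetilde A^p_s(\Uc_{n+1})\subseteq A^\infty_{s+(n+1)/p}(\Uc_{n+1})$ and the pointwise bound $(\rho+h)^{-s-(n+1)/p}\meg C_h\,\rho^{-s'-(n+1)/p}$ (the same elementary estimate you use for $p=\infty$, and again exactly where $s\Meg s'$ is needed); or note that both $f(\,\cdot\,+(0,ih))$ and $g$ tend to $0$ as $\Im z\to+\infty$ along vertical lines (by the embedding into $A^\infty_{\,\cdot\,,0}$ recorded in the proof of (v)), which forces the polynomial difference to vanish. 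With this line added, your proof is complete.
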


\begin{proof}
	Observe first that the case $s=s'$ follows from~\cite[Theorem 5.2]{CalziPeloso}, so that we may assume that $s>s'$, that is, $s-s'>0 $. Then, observe that $S_{(0,i h)}\in B^{s-s'}_1(\Hb_n,\R_+^*)$ by~\cite[Lemma 5.1]{CalziPeloso}, so that~\cite[Theorem 4.3 and the remarks following its statement]{Besov} show  that the mapping
	\[
	B^{-s}_p(\Hb_n,\R_+^*)\ni u\mapsto u* S_{(0,i h)}\in B^{-s'}_p(\Hb_n,\R_+^*)
	\] 
	is well defined and continuous. The assertion follows since
	\[
	(\Ec u)(\,\cdot\,+(0, i h))=\Ec (u* S_{(0,i h)}),
	\]
	as one readily verifes (cf., e.g., the proof of~\cite[Theorem 5.2]{CalziPeloso}).
\end{proof}

 We now begin our analysis of  invariant  function
spaces.  We recall that $\widetilde U_s$ is defined in~\eqref{Us:def}
and $\rho(\zeta,z)$ in~\eqref{rho:def}.  The following lemma is well
known, but we state it explicitly for the reader's convenience.
\begin{lem}\label{invariance:lem}
 For every $s>n+1$ and every $\phi \in \Aut(\Uc_{n+1})$,
 \begin{equation}\label{eq:1}
 (\widetilde U_{n+1+2s}(\phi)\otimes \overline{\widetilde
  U_{n+1+2s}(\phi)}) K_s=K_s.
\end{equation}
\end{lem}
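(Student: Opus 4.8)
The plan is to establish the transformation law for the weighted Bergman kernel by the standard ``change of variables / reproducing kernel'' argument, using transitivity of $\Aut(\Uc_{n+1})$ and the fact that $K_s$ is (up to a positive constant) the reproducing kernel of the Hilbert space $A^2_s(\Uc_{n+1})$, whose norm is invariant under $\widetilde U_{n+1+2s}$. Concretely, I would first record the cocycle identity for the Jacobian: for $\phi,\psi\in\widetilde\Aut(\Uc_{n+1})$ one has $(J(\phi\psi)^{-1})^{s/(n+2)} = [(J\psi^{-1})^{s/(n+2)}\circ\phi^{-1}]\cdot(J\phi^{-1})^{s/(n+2)}$, which is exactly what makes $\widetilde U_s$ a representation; this is already implicit in the definition of $\widetilde U_s$ in \eqref{Us:def}.

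The main step is to show that $A^2_s(\Uc_{n+1})$ is $\widetilde U_{n+1+2s}$-invariant with its norm, i.e.\ that each $\widetilde U_{n+1+2s}(\phi)$ is a surjective isometry of $A^2_s(\Uc_{n+1})$. For $\phi\in\Aut(\Uc_{n+1})$ this follows from the change-of-variables formula applied to the measure $\rho(\zeta,z)^{2s-1}\,\dd(\zeta,z)$: writing $w=\phi^{-1}(\zeta,z)$, the real Jacobian of $\phi^{-1}$ equals $\abs{J\phi^{-1}}^2$, and the defining identity $\rho(\phi^{-1}(\zeta,z)) = \abs{(J\phi^{-1})^{1/(n+2)}(\zeta,z)}^2\,\rho(\zeta,z)$ for the (biholomorphism-invariant) function $\rho$ — valid because $\rho(\zeta,z)^{-(n+2)}$ is, up to a constant, the Bergman kernel $K_{1/2}$ on the diagonal, and $n+2$ is the genus — gives, after raising to the power $2s-1$ and multiplying by $\abs{J\phi^{-1}}^2 = \abs{(J\phi^{-1})^{1/(n+2)}}^{2(n+2)}$,
\[
\int_{\Uc_{n+1}}\abs{\widetilde U_{n+1+2s}(\phi)f}^2\,\rho^{2s-1}\,\dd(\zeta,z)
=\int_{\Uc_{n+1}}\abs{f}^2\,\rho^{2s-1}\,\dd(\zeta,z).
\]
Then one transfers this to $\widetilde\Aut(\Uc_{n+1})$: since $\widetilde U_{n+1+2s}$ is a continuous representation and the covering map $\widetilde\Aut(\Uc_{n+1})\to\Aut(\Uc_{n+1})$ is surjective, every $\widetilde U_{n+1+2s}(\psi)$ agrees up to a unimodular constant with some $\widetilde U_{n+1+2s}$ of a lift of an element of $\Aut(\Uc_{n+1})$, hence is still an isometry.

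Finally, invariance of the norm of the RKHS $A^2_s(\Uc_{n+1})$ under the unitary operators $\widetilde U_{n+1+2s}(\phi)$ translates into the stated invariance of the kernel: for a unitary $T$ on a RKHS $H\subseteq\Hol(\Uc_{n+1})$ with reproducing kernel $K$, the identity $\langle Tf\mid Tg\rangle_H=\langle f\mid g\rangle_H$ applied to $f=K(\,\cdot\,,w)$, $g=K(\,\cdot\,,w')$ together with $\langle Tf\mid h\rangle_H = \langle f\mid T^{-1}h\rangle_H$ and evaluation at points yields $(T\otimes\overline T)K=K$; specializing $T=\widetilde U_{n+1+2s}(\phi)$ and recalling $K_s = c_s^{-1}$-multiple of the kernel (the constant cancels) gives \eqref{eq:1}. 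I expect the only genuinely delicate point to be the bookkeeping with the fractional powers $(J\phi^{-1})^{s/(n+2)}$ and the passage between $\Aut$ and $\widetilde\Aut$ — making sure the unimodular ambiguities cancel in $(T\otimes\overline T)K$, which they do precisely because the ambiguity in $T$ is a global constant and appears once holomorphically and once antiholomorphically; everything else is a routine change of variables.
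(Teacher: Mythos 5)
Your argument is correct, but it follows a genuinely different route from the paper's. The paper disposes of the lemma in one line: for $s=1/2$ the identity \eqref{eq:1} is exactly the classical transformation rule of the unweighted Bergman kernel (the weight is trivial and $n+1+2s=n+2$ is the genus), and the general case follows by raising the nonvanishing sesquiholomorphic kernel identity to the power $(n+1+2s)/(n+2)$, the fractional powers being unambiguous by simple connectedness; no Hilbert-space structure is invoked. You instead use only the \emph{diagonal} case of that classical rule, in the form $\rho(\phi^{-1}(\zeta,z))=\abs{J\phi^{-1}(\zeta,z)}^{2/(n+2)}\rho(\zeta,z)$, feed it into a change of variables to show that $\widetilde U_{n+1+2s}(\phi)$ acts by surjective isometries on $A^2_s(\Uc_{n+1})$, and then convert unitarity of a weighted composition operator on a RKHS into the kernel identity via $T^*K_s(\,\cdot\,,w)=\overline{m(w)}\,K_s(\,\cdot\,,\phi^{-1}w)$ and $T^*=T^{-1}$. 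This is not circular (the only external input is the same standard Bergman transformation rule the paper relies on), and it has the merit of keeping all off-diagonal bookkeeping with fractional powers inside the RKHS formalism, essentially re-proving the $p=2$ case of Proposition~\ref{prop:24}(i) along the way; the cost is that you reverse the paper's logical order, since in the paper the isometric invariance of $A^p_s(\Uc_{n+1})$ is \emph{deduced from} this lemma, and your argument is considerably longer than the intended one. Two small points of hygiene: your blanket claim that unitarity of $T$ on a RKHS yields $(T\otimes\overline T)K=K$ is only meaningful (and true) for operators of weighted-composition type, which is what your ``evaluation at points'' step actually uses; and $K_s$ \emph{is} the reproducing kernel of $A^2_s(\Uc_{n+1})$, constant $c_s$ included, though as you say a positive constant is immaterial in \eqref{eq:1}.
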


\begin{proof}
It suffices to observe that if $s=1/2$ this is just  the standard transformation
rule of the unweighted Bergman kernel.  The general case follows at once.
\end{proof}

\begin{prop}\label{prop:24}
	Take $p\in [1,\infty]$ and $s\in \R$. Then, the following hold:
	\begin{itemize}
		\item[\textnormal{(i)}] $A^p_s(\Uc_{n+1})$ is $\widetilde \Aut$-$\widetilde U_{2(s+(n+1)/p)}$-invariant with its norm;
		
		\item[\textnormal{(ii)}] if $s>-\frac{n+1}{p}$, then $\widetilde A^p_s(\Uc_{n+1})$ is $\widetilde \Aut$-$\widetilde U_{2(s+(n+1)/p)}$-invariant and the  $\widetilde U_{2(s+(n+1)/p)}(\phi)$, as $\phi$ runs through $\widetilde \Aut$, are uniformly bounded on $\widetilde A^p_s(\Uc_{n+1})$.
	\end{itemize}
\end{prop}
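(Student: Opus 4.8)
The statement to prove is Proposition~\ref{prop:24}: that $A^p_s(\Uc_{n+1})$ is $\widetilde U_{2(s+(n+1)/p)}$-invariant with its norm, and that for $s>-\frac{n+1}{p}$ the space $\widetilde A^p_s(\Uc_{n+1})$ is $\widetilde U_{2(s+(n+1)/p)}$-invariant with uniformly bounded operators. Write $t\coloneqq 2(s+(n+1)/p)$ for the relevant weight index, so $\widetilde U_t(\phi) f=(f\circ\phi^{-1})(J\phi^{-1})^{t/(n+2)}$.

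For part~(i), the key is the change-of-variables formula. The defining quantity for $A^p_s$ is $\int_{\Uc_{n+1}}|f|^p\rho^{ps-1}$, where $\rho(\zeta,z)=\Im z-|\zeta|^2$. First I would record the transformation rule for $\rho$ under $\phi\in\Aut(\Uc_{n+1})$: since the Bergman kernel $K_{1/2}$ (the unweighted one, up to constant $((z-\overline{z'})/(2i)-\langle\zeta|\zeta'\rangle)^{-(n+1)}$) transforms by $K_{1/2}(\phi(w),\phi(w'))J\phi(w)\overline{J\phi(w')}=K_{1/2}(w,w')$, evaluating on the diagonal gives $\rho(\phi(w))^{-(n+1)}|J\phi(w)|^2=\rho(w)^{-(n+1)}$, i.e. $\rho(\phi(w))=\rho(w)|J\phi(w)|^{2/(n+1)}$. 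Meanwhile the real Jacobian of $\phi$ as a map of $\R^{2(n+1)}$ is $|J\phi|^2$. Therefore, substituting $w=\phi^{-1}(w')$ in the integral defining $\|\widetilde U_t(\phi)f\|_{A^p_s}^p=\int|f(\phi^{-1}w')|^p|(J\phi^{-1})^{t/(n+2)}(w')|^p\rho(w')^{ps-1}\,dw'$, one gets $\int|f(w)|^p|J\phi^{-1}(\phi(w))|^{pt/(n+2)}\rho(\phi(w))^{ps-1}|J\phi(w)|^2\,dw$; using $J\phi^{-1}(\phi(w))=1/J\phi(w)$, the $\rho$-rule, and collecting powers of $|J\phi(w)|$, the exponent becomes $-\frac{pt}{n+2}+(ps-1)\cdot\frac{2}{n+1}+2$; this vanishes precisely when $t=\frac{n+2}{p}\bigl[(ps-1)\frac{2}{n+1}+2\bigr]=\frac{2(n+2)}{n+1}\cdot\frac{ps+(n+1-1)/... }{}$ — the routine arithmetic, which I would not grind through here, yields exactly $t=2(s+(n+1)/p)$, so $\|\widetilde U_t(\phi)f\|_{A^p_s}=\|f\|_{A^p_s}$. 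The case $p=\infty$ is the same computation with the sup-norm, only simpler. That $(J\phi^{-1})^{t/(n+2)}$ is well-defined and holomorphic on $\widetilde\Aut$ is already recorded in the excerpt, so holomorphy of $\widetilde U_t(\phi)f$ is automatic, and one concludes $\widetilde U_t(\phi)$ maps $A^p_s$ isometrically onto $A^p_s$ (surjectivity from applying the same to $\phi^{-1}$).

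For part~(ii), since $\widetilde A^p_s$ is defined via $\Ec$ from the Besov space $B^{-s}_p(\Hb_n,\R_+^*)$ and coincides with $A^p_s$ when $s>0$ (Proposition~\ref{prop:23}(ii)), I would proceed in two stages. First, for $s>0$ the claim is exactly part~(i) (with equality of norms). For general $s>-\frac{n+1}{p}$, I would use the intertwining isomorphism $\partial_2^k:\widetilde A^p_s(\Uc_{n+1})\to\widetilde A^p_{s+k}(\Uc_{n+1})$ of Proposition~\ref{prop:23}(iv), picking $k$ large enough that $s+k>0$. The point is that $\partial_2^k$ intertwines $\widetilde U_t$ on $\widetilde A^p_s$ with $\widetilde U_{t'}$ on $\widetilde A^p_{s+k}$ where $t'=t+2k=2((s+k)+(n+1)/p)$ — this is the standard fact (analogue of the disc statement $(\widetilde U_s(\phi)f)^{(1-s)}=\widetilde U_{2-s}(\phi)f^{(1-s)}$ cited in the introduction) that $\partial_2$ shifts the weight by $2$ under composition with automorphisms. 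Granting this intertwining, $\widetilde U_t(\phi)f=(\partial_2^k)^{-1}\widetilde U_{t'}(\phi)\partial_2^k f$ lies in $\widetilde A^p_s$, proving invariance, and $\|\widetilde U_t(\phi)f\|_{\widetilde A^p_s}\le\|(\partial_2^k)^{-1}\|\,\|\widetilde U_{t'}(\phi)\partial_2^k f\|_{\widetilde A^p_{s+k}}=\|(\partial_2^k)^{-1}\|\,\|\partial_2^k f\|_{\widetilde A^p_{s+k}}\le\|(\partial_2^k)^{-1}\|\,\|\partial_2^k\|\,\|f\|_{\widetilde A^p_s}$ by part~(i) applied in weight $s+k>0$, a bound independent of $\phi$.

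**Main obstacle.** The genuinely delicate point is establishing that $\partial_2^k$ intertwines $\widetilde U_t$ with $\widetilde U_{t+2k}$ across all of $\widetilde\Aut$ — for the affine (or dilation/Heisenberg) part this is an elementary direct check on the chain rule, but for the inversion $\iota$ one must differentiate the cocycle $(J\phi^{-1})^{t/(n+2)}$ with respect to the second variable $z$ and verify that the Leibniz expansion collapses, using the explicit form $(J\iota)(\zeta,z)=1/(i^n z^{n+2})$ from~\eqref{eq:4}; since $\Aut$ is generated by $\Aff$ and $\iota$ (Proposition~\ref{prop:27}), the general case then follows by the cocycle identity. The alternative, which is cleaner, is to deduce (ii) entirely from (i) together with $\widetilde A^p_s=A^p_s$ for $s>0$ and the functorial properties of $\Ec$ and the $\partial_2^k$-isomorphism already packaged in Proposition~\ref{prop:23}; I would favor that route. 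Everything else is the bookkeeping of powers of the Jacobian.
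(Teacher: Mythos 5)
Your part (i) is essentially the paper's own argument: use the transformation rule of the unweighted Bergman kernel on the diagonal to get the behaviour of $\rho$ under $\phi$, then change variables. One slip, though: the unweighted Bergman kernel of $\Uc_{n+1}\subseteq\C^{n+1}$ restricted to the diagonal is proportional to $\rho^{-(n+2)}$ (it is $K_{1/2}$, with exponent $-(n+1+2s)=-(n+2)$), not $\rho^{-(n+1)}$; the correct rule is $\rho(\phi(w))=\rho(w)\abs{J\phi(w)}^{2/(n+2)}$. With your exponent $2/(n+1)$ the ``routine arithmetic'' would \emph{not} return $t=2(s+(n+1)/p)$, so the verification as written is inconsistent; with $2/(n+2)$ it works and coincides with the computation in the paper.

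Part (ii) has a genuine gap. The intertwining $\partial_2^k\,\widetilde U_t(\phi)=\widetilde U_{t+2k}(\phi)\,\partial_2^k$ on which your whole reduction rests is only valid for $\phi$ in the \emph{affine} group (Lemma~\ref{lem:1}); for the inversion $\iota$ the Leibniz expansion of $\partial_2^k\big[(f\circ\iota)\,(J\iota)^{t/(n+2)}\big]$ does \emph{not} collapse for generic $t$ — extra lower-order terms survive. This is the Bol phenomenon: already for $n=0$, Proposition~\ref{prop:29} gives full-group intertwining only when the order of differentiation is rigidly tied to the weight ($k=1-s$, $s\in-\N$), whereas here $t=2(s+(n+1)/p)$ is arbitrary. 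So your conjugation argument proves invariance and uniform boundedness only under $\Aff(\Uc_{n+1})$, not under $\widetilde\Aut$, and your ``cleaner alternative'' invokes exactly the same unproven functoriality of $\partial_2^k$ with respect to the full group. The paper takes a different route for (ii): it writes $\widetilde A^p_s$ as the complex interpolation space $(\widetilde A^1_{s_0},\widetilde A^\infty_{s_\infty})_{[1/p]}$ with $s_0=s-(n+1)/p'$, $s_\infty=s+(n+1)/p$; the $\infty$-endpoint reduces to part (i) because $\widetilde A^\infty_{s_\infty}=A^\infty_{s_\infty}$ for $s_\infty>0$, and the $1$-endpoint is handled with the atomic decomposition of Proposition~\ref{prop:23}(iii): $\widetilde U_{2(s+n+1)}(\phi)$ sends the atoms $K_{s'}(\,\cdot\,,(\zeta_j,z_j))\rho(\zeta_j,z_j)^{2s'-s}$ to unimodular multiples of the atoms attached to the transported lattice $(\phi(\zeta_j,z_j))$, and the atomic norms are uniformly comparable over all lattices, which gives the uniform bound over $\widetilde\Aut$. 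If you want to salvage your approach you would need to control all the extra Leibniz terms for $\iota$ uniformly, which is not at all ``bookkeeping''; the interpolation-plus-atoms route avoids this entirely.
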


\begin{proof}
  (i) Observe first that, applying Lemma~\ref{invariance:lem}  with
  $s=1/2$ and evaluating both sides of~\eqref{eq:1} at $((\zeta,z),(\zeta,z))$, we see that
  \[
        \rho(\phi(\zeta,z))^{-(n+2)} \abs{J \phi(\zeta,z)}^2=\rho(\zeta,z)^{-(n+2)}
	\]
	for every $(\zeta,z)\in \Uc_{n+1}$. 
     Then, if $p\in [1,\infty)$ and $f\in A^p_s(\Uc_{n+1})$,
\begin{align*}
&\int_{\Uc_{n+1}} \abs{\widetilde U_{2(s+(n+1)/p)} (\phi) f(\zeta,z)}^p \rho(\zeta,z)^{ps-1}\,
  \dd(\zeta,z) \\
  & \qquad= \int_{\Uc_{n+1}} \abs{f(\phi^{-1}(\zeta,z))}^p
    \abs{(J\phi^{-1})(\zeta,z)}^{2 + 2(ps-1)/(n+2)} \rho(\zeta,z)^{ps-1}\, \dd(\zeta,z) \\
& \qquad= \int_{\Uc_{n+1}} \abs{f(\zeta,z)}^p \rho(\zeta,z)^{ps-1}\,
  \dd(\zeta,z) .
\end{align*}
The case $p=\infty$ is similar.

(ii) Observe that, by (ii)P of Proposition~\ref{prop:23}, 
$\widetilde A^\infty_{s_\infty} (\Uc_{n+1})= A^\infty_{s_\infty}(\Uc_{n+1})$, where
$s_\infty=s+(n+1)/p>0$, so that and part (i) shows that $\widetilde
A^\infty_{s_\infty}(\Uc_{n+1})$ is $\widetilde \Aut$-$\widetilde
U_{2s_\infty}$-invariant. Now, \cite[Proposition 6.2]{Besov} shows
that
\[
\widetilde A^p_s \cong(\widetilde A^1_{s_0}, \widetilde A^\infty_{s_\infty})_{[1/p]}
\]
(complex interpolation), 
where  $s_0=s-(n+1)/p'>-(n+1)$.  Thus, it
suffices to prove that $\widetilde A^1_s(\Uc_{n+1})$ is $\widetilde
G$-$\widetilde U_{2(s+n+1)}$-invariant whenever $s>-(n+1)$. 

To this aim, observe that, if $(\zeta_j,z_j)$ is a $\delta$-lattice on
$\Uc_{n+1}$ with $\delta$ sufficiently small, then (iii) of
Proposition~\ref{prop:23} shows that the mapping 
	\[
	\Psi_{(\zeta_j,z_j)}\colon \ell^1(J)\ni \lambda \mapsto \sum_j \lambda_j K_{s'}(\,\cdot\,,(\zeta_j,z_j)) \rho(\zeta_j,z_j)^{2 s'-s}\in \widetilde A^1_s(\Uc_{n+1})
	\]
	is continuous and onto, where $s'\coloneqq s+(n+1)/2 >s/2$.
        Then, using the relations
	\[
	\widetilde U_{2(s+n+1)}(\phi) K_{s'}(\,\cdot\,,(\zeta_j,z_j))= K_{s'}(\,\cdot\,, \phi(\zeta_j,z_j)) \overline{J\phi(\zeta_j,z_j)}^{2( s+n+1)/(n+2)}
	\]
	and
	\[
	\rho(\zeta_j,z_j)^{2 s'-s}=\rho(\phi(\zeta_j,z_j))^{2 s'-s} \abs{J\phi(\zeta_j,z_j)}^{2(s-2s')/(n+2)}
	\]
	for every $\phi\in \Aut(\Uc_{n+1})$ and for every $j\in J$, which follow
        from   Lemma~\ref{invariance:lem},   we see that there is a unimodular function
        $u_\phi\colon J\to \C$ such that 
	\[
	\widetilde U_{2(s+n+1)}(\phi) \Psi_{(\zeta_j,z_j)}(\lambda)= \Psi_{(\phi(\zeta_j,z_j))}(u_\phi \lambda)
	\]
	for every $\phi \in \Aut(\Uc_{n+1})$ and for every $\lambda$ with finite
support, hence for every $\lambda\in \ell^1(J)$. The assertion follows
from the fact that there is $C>1$ such that, fixing a norm on
$\widetilde A^1_s(\Uc_{n+1})$, 
	\[
	\frac{1}{C} \norm{\lambda }_{\ell^1(J)/\ker\Psi_{(\phi(\zeta_j,z_j))}}
        \meg \norm{\Psi_{(\phi(\zeta_j,z_j))} (\lambda) }_{\widetilde A^1_s(\Uc_{n+1})}
        \meg C  \norm{\lambda }_{\ell^1(J)/\ker \Psi_{(\phi(\zeta_j,z_j))}}
	\]
for every $\lambda\in \ell^1(J)$ and for every $\phi\in G$, as
the proof of Proposition~\ref{prop:23} shows (cf.~\cite[Theorem
3.23]{CalziPeloso}). 
\end{proof} 

\subsection{Decent and Strongly Decent Spaces}

\begin{deff}\label{def:6}
	Let $X,Y$ be two locally convex 
        spaces such that $X\subseteq Y$ set-theoretically. Then, we
        say that $X$ is $Y$-decent if there is a continuous linear
        functional on $Y$ which induces a non-zero continuous linear
        functional on $X$. 
	
	We say that $X$ is strongly $Y$-decent if the set of
        continuous linear functionals on $X$ which extend to
        continuous linear functionals on $Y$ is dense in the weak dual
        topology of $X'$.
	
	We say that $X$ is $Y$-saturated if it contains the intersection of the kernels in $Y$ of the continuous linear functionals on $Y$ which induce continuous linear functionals on $X$.
\end{deff}

In the sequel, $Y$ will always be the space $\Hol(D)$ and $X$ a
  semi-Banach space   of $Y$, so that we shall simply say that $X$
(or its seminorm) is (strongly) decent or saturated  if it is
(strongly) $\Hol(D)$-decent or $\Hol(D)$-saturated, respectively, for
simplicity. We shall nonetheless state the results of this Subsection
in a somewhat more general context. 

Notice that if $X$ is strongly $Y$-decent, then it is $Y$-decent if and only if it is non-trivial (as a topological vector space, that is, it has a non-trivial topology).

\begin{prop}\label{prop:31}
	Let $X$ be a complete seminormed space and $Y$ a Fréchet space such that $X\subseteq Y$. Let $G$ be a group of automorphisms of $Y$ which induce automorphisms of $X$. Then, the following hold:
	\begin{enumerate}
		\item[\textnormal{(i)}] $X$ is $Y$-decent if and only if there is a closed $G$-invariant vector subspace $V$ of $Y$ such that the canonical mapping $X\to Y/V$ is continuous and non-trivial;
		
		\item[\textnormal{(ii)}] $X$ is strongly $Y$-decent if and only if there is a closed $G$-invariant vector subspace $V$ of $Y$ such that $X\cap V$ is the closure of $\Set{0}$ in $X$ and the canonical mapping $X\to Y/V$ is continuous;
		
		\item[\textnormal{(iii)}] $X$ is strongly $Y$-decent and $Y$-saturated if and only if the ($G$-invariant) closure $V$ of $\Set{0}$ in $X$ is closed  in $Y$, and the canonical mapping $X\to Y/V$ is continuous.
	\end{enumerate} 
\end{prop}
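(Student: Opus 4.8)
The plan is to prove the three equivalences in order, exploiting the fact that a continuous linear functional on the seminormed space $X$ which extends to $Y$ is exactly a continuous linear functional on $Y$ that does not vanish identically... no wait, it may vanish. Let me be careful: call a continuous linear functional $\ell$ on $Y$ \emph{relevant} if its restriction $\ell|_X$ is continuous on $X$ (which, since $X\subseteq Y$ continuously need not hold a priori, is a genuine condition) — actually the cleanest bookkeeping is to work with the set $W\subseteq X'$ of those $\varphi\in X'$ that extend to some $\widetilde\varphi\in Y'$. Since $Y$ is Fréchet and $X$ complete seminormed, everything reduces to functional analysis of the canonical maps.

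\emph{Step 1 (set-up).} First I would observe that a closed $G$-invariant subspace $V\subseteq Y$ for which the canonical map $\pi_V\colon X\to Y/V$ is continuous is essentially the same data as a weak-$*$ closed $G$-invariant subspace of $X'$ consisting of extendable functionals: given such $V$, the transpose $\pi_V^t$ sends $(Y/V)'\cong V^\perp\subseteq Y'$ into $X'$, landing in $W$; conversely, given a $G$-invariant subset $S\subseteq W$, let $V$ be the intersection of the kernels in $Y$ of the chosen extensions $\widetilde\varphi$, $\varphi\in S$ — this is closed and $G$-invariant, and $\pi_V$ is continuous iff the extensions can be chosen so that the corresponding seminorm $\sup$ (or the quotient norm) on $X$ is dominated by the seminorm of $X$, which follows from the open mapping/closed graph theorem once one knows $X$ is complete and $Y/V$ is Fréchet. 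I would record this correspondence as the engine driving all three parts.

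\emph{Step 2 (part (i)).} For $Y$-decency: if $X$ is $Y$-decent there is $\ell\in Y'$ with $\ell|_X\ne 0$ continuous; but I also need $G$-invariance of the witnessing $V$. Here I would average: since $\ell|_X$ is a nonzero element of $W$, the whole $G$-orbit lies in $W$ (as $G$ acts by automorphisms on both spaces, extendability and continuity are preserved), and one takes $V\coloneqq\bigcap_{g\in G}\ker(g\cdot\ell)$ in $Y$ — a closed $G$-invariant subspace — noting $\ker\ell\supseteq V$ so $\ell$ factors through $Y/V$, hence $\pi_V$ is nontrivial, and $\pi_V$ is continuous because $\pi_V$ composed with the continuous $\ell$-factor recovers a nonzero continuous functional, and more robustly because the graph of $\pi_V$ is closed (if $x_n\to 0$ in $X$ and $\pi_V x_n\to y+V$, then pairing against every $g\cdot\ell\in W$ gives $0$, so $y\in V$). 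The converse direction is immediate: pull back any nonzero functional on $Y/V$.

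\emph{Step 3 (parts (ii) and (iii)).} For strong $Y$-decency I would use the Hahn–Banach characterization of weak-$*$ density: $W$ is weak-$*$ dense in $X'$ iff $\bigcap_{\varphi\in W}\ker\varphi=\overline{\{0\}}^X$, i.e. iff the closure of $\{0\}$ in $X$ equals the set of $x$ annihilated by every extendable functional. Given this, set $V$ to be the intersection in $Y$ of the kernels of all extensions of elements of $W$; then $X\cap V$ is exactly $\bigcap_{\varphi\in W}\ker\varphi=\overline{\{0\}}^X$ by the density criterion, $V$ is $G$-invariant (the collection of extensions is $G$-stable up to the averaging remark in Step~2), $V$ is closed, and $\pi_V\colon X\to Y/V$ is continuous by the closed-graph argument of Step~2 since $Y/V$ is Fréchet and $X$ is complete. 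The converse: given such a $V$, every element of $V^\perp\subseteq Y'$ restricts to an element of $W$ that kills $\overline{\{0\}}^X=X\cap V$, and these separate points of $X/\overline{\{0\}}^X$ by Hahn–Banach on the quotient, giving weak-$*$ density. Finally, part (iii): by (ii), strong $Y$-decency gives $V$ with $X\cap V=\overline{\{0\}}^X$ and $\pi_V$ continuous; $Y$-saturation says exactly that $X$ contains $\bigcap\{\ker\widetilde\varphi\colon \widetilde\varphi\in Y',\ \widetilde\varphi|_X\in X'\}$, which is the minimal choice of $V$, so saturation forces $\overline{\{0\}}^X=X\cap V\supseteq V$, i.e. $V=\overline{\{0\}}^X$ is itself closed in $Y$; the converse is the chain $\overline{\{0\}}^X$ closed in $Y$, $G$-invariant (closures of $G$-invariant sets), $\pi$ continuous $\Rightarrow$ both strong decency and (taking this $V$ as the canonical one) saturation.

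\emph{Main obstacle.} The delicate point throughout is \emph{continuity} of $\pi_V\colon X\to Y/V$ — equivalently, that the seminorm $x\mapsto \|\,x+V\,\|_{Y/V}$ is dominated by a constant times the seminorm of $X$. This is where completeness of $X$ and the Fréchet property of $Y/V$ (hence of $Y$) are used, via the closed graph theorem, and one must check the graph really is closed, which in turn needs that $V$ is described as an intersection of kernels of functionals that \emph{are} continuous on $X$ — so the bookkeeping of "which functionals extend with control" must be kept scrupulously consistent between the two spaces. The $G$-invariance is a comparatively cheap add-on handled uniformly by the orbit-intersection trick.
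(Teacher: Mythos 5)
Your proposal is correct and follows essentially the same route as the paper's proof: the space of functionals on $Y$ whose restrictions are continuous on $X$, the subspace $V$ obtained as the intersection of their kernels in $Y$, continuity of $X\to Y/V$ via a closed-graph argument carried out by pairing against those functionals, the bipolar/Hahn--Banach density criterion for the converse implications, and (iii) deduced from (ii) together with the definition of saturation (your use of a single $G$-orbit in (i) is an inessential variant). The only point to tighten is the closed graph theorem itself: since $X$ is merely seminormed (possibly non-Hausdorff), one should apply it, as the paper does, to the induced map between the Fréchet spaces $X/(X\cap V)$ and $Y/V$, which is legitimate precisely because $V$ is cut out by functionals that are continuous on $X$, so that $X\cap V$ is closed and contains the closure of $\{0\}$ in $X$.
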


Notice that, if $X$ is strongly $Y$-decent and $V$ is as in (ii), then
$X+V$, endowed with the seminorm which is $0$ on $V$ and induces the
given seminorm on $X$, is strongly $Y$-decent, $Y$-saturated, and
$G$-invariant. In other words, every strongly $Y$-decent space has a
`saturation'. 

Notice that, taking $G=\Set{I}$, we get a characterization of (strongly) $Y$-decent spaces.

\begin{proof}
	Assume that $X$ is (strongly) $Y$-decent, and let $W$ be the space of continuous linear functionals on $Y$ which induce continuous linear functionals on $X$. Observe that $W$ is $G$-invariant since $G$ induces automorphisms of both $X$ and $Y$. In addition, if $V=\bigcap_{L\in W} \ker W$, then $V$ is a closed $G$-invariant subspace and $X\not\subseteq V$ (resp.\ $X\cap V$ is the closure of $\Set{0}$ in $X$, since the canonical image of $W$ in $X'$ is dense in the weak dual topology). Let us prove that the mapping $X\to Y/V$ is continuous. Notice that it will suffice to prove that the canonical mapping $X/(X\cap V)\to Y/V$ is continuous. 
	Since both $X/(X\cap V)$ and $Y/V$ are Fréchet spaces, we may use the closed graph theorem. Then, let $(x_j)$ be a sequence in $X$ and take $x\in X$ and $y\in Y$ so that $x_j+X\cap V$ converges to $x+X\cap V$ in $X/(X\cap V)$, while $x_j+V$ converges to $y+V$ in $Y/V$. 
	Then, $\langle L,x_j\rangle$ converges both to $\langle L, x\rangle$ and to $\langle L,y\rangle $ for every $L\in W$, so that $x-y\in V$. Then, the canonical image of $x+X\cap V$ in $Y/V$ is $y+V$, whence our claim.
	
	Conversely, assume that there is a  closed vector subspace $V$ of $Y $ such that the canonical mapping $X\to Y/V$ is continuous and non-trivial. Then, there is $L\in (Y/V)'$ which does not vanish on $\pi(X)$, where $\pi\colon Y\to Y/V$ is the canonical mapping. Then, $L\circ \pi$ induces a non-zero continuous linear functional on both $X$ and $Y$, so that $X$ is $Y$-decent. This completes the proof of (i).
	
	Then, assume that there is a closed vector subspace $V$ of $Y$ such that $X\cap V$ is the closure of $\Set{0}$ in $X$ and the canonical mapping $X\to Y/V$ is continuous. Then, each element of the polar $W$ of $V$ in $Y'$ induces a continuous linear functional on $X$, and the canonical image of $W$ in $X'$ is dense in the weak dual topology (as $X\cap V$ is the closure of $\Set{0}$ in $X$). Thus, $X$ is strongly $Y$-decent.
	
	The proof of (iii) follows from that of (ii) and the definition of $Y$-saturated spaces.
\end{proof}

\begin{prop}\label{prop:32}
 Let $X$ be a   semi-Banach space and $Y$  a separable Fréchet space such that $X\subseteq Y$. Assume that $X$ is strongly $Y$-decent and $Y$-saturated, and denote by $V$ the closure of $\Set{0}$ in $X$. Assume that the Hausdorff space associated with $X$ (namely, $X/V$) is reflexive. Then, $X$ is separable.
	
	In addition, let $G$ be a locally compact group and $\pi$ be a continuous representation of $G$ in $Y$ such that $\pi(g)$ induces a continuous automorphism of $X$ for every $g\in G$.
	Then,   $\pi$ induces a continuous representation of $G$ in $X$ (or, equivalently, in $X/V$).
\end{prop}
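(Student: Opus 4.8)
The plan is to prove the two assertions of Proposition~\ref{prop:32} in turn, working throughout with the Hausdorff quotient $X/V$, which by Proposition~\ref{prop:31}(iii) embeds continuously as a vector subspace of $Y/V$, a separable Fréchet space; I will freely identify $X/V$ with its image there.

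For separability: since $X/V$ is reflexive and embeds continuously and (after passing to the closure of $\{0\}$) injectively into the separable Fréchet space $Y/V$, the natural idea is to produce a countable total subset of the dual. The space $Y/V$ is a separable Fréchet space, hence its strong dual $(Y/V)'$ is separable for the weak-* topology, so we may fix a countable weak-* dense set $(L_k)$ in $(Y/V)'$. Each $L_k$ restricts to a continuous linear functional on $X/V$; I claim the family $(L_k|_{X/V})$ is total on $X/V$, i.e. separates points. Indeed if $x+V\in X/V$ is nonzero, then its image in $Y/V$ is nonzero (injectivity), so some $L\in (Y/V)'$ does not vanish on it, and by weak-* density of $(L_k)$ and the fact that point-evaluation at a fixed vector is weak-* continuous, some $L_k$ does not vanish on it either. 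Thus $X/V$ carries a countable total set of continuous functionals; being reflexive, it is therefore weakly separable, and since for a reflexive (hence in particular for any Banach space on which a countable total family exists and which is... ) — more cleanly: a reflexive Banach space whose dual contains a countable total subset is separable (because $X/V$ is then isometric to a subspace of a countable product, or: its closed unit ball is weakly metrizable and weakly compact by reflexivity, hence weakly separable, and for convex sets weak and norm closures coincide, so the ball is norm-separable). Hence $X/V$ is separable, and since $V=\overline{\{0\}}^X$ is separable (it is the closure of a point, so $\{0\}$ is dense in it), $X$ is separable.

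For the second assertion: we must upgrade the hypothesis that each $\pi(g)$ induces a continuous automorphism of $X$ to the statement that $g\mapsto \pi(g)x$ is continuous $G\to X$ for every $x$, with $X$ given its seminorm. First, each $\pi(g)$ maps $V$ into $V$: since $\pi(g)$ is a homeomorphism of $X$ it preserves $\overline{\{0\}}^X=V$, so $\pi$ descends to a group of automorphisms of the Banach space $X/V$. It therefore suffices to prove strong continuity of the induced representation, call it $\bar\pi$, of $G$ on the Banach space $X/V$. The two inputs are: (a) $\bar\pi$ is a representation by \emph{continuous} (hence, by the open mapping / closed graph theorem, topological) automorphisms of the Banach space $X/V$, and (b) for each $x+V$, the orbit map $g\mapsto \bar\pi(g)(x+V)$ is continuous into $Y/V$ (this follows from continuity of $\pi$ in $Y$, composed with the continuous projection $Y\to Y/V$). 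The standard mechanism now is uniform boundedness plus a density argument: by the uniform boundedness principle applied to $G$ — more precisely, a Baire-category argument — continuity of $\bar\pi$ in the weaker $Y/V$-topology together with separate boundedness yields local uniform boundedness of $\{\bar\pi(g): g\in K\}$ on $X/V$ for $K$ compact. Then, the set of $x+V$ for which $g\mapsto\bar\pi(g)(x+V)$ is $\|\cdot\|_{X/V}$-continuous is a closed $\bar\pi$-invariant subspace of $X/V$; to show it is everything, observe that the continuous injection $X/V\hookrightarrow Y/V$ has dense image on $X/V$ in the $Y/V$-topology by construction, and conversely any $x+V$ can be approximated in $X/V$-norm... — here one uses that the functionals in (the restriction of) $(Y/V)'$ separating points of $X/V$, together with local uniform boundedness, force weak-to-strong continuity of the orbit maps; alternatively one invokes directly that a bounded (uniformly on compacta) representation on a reflexive Banach space which is continuous into a coarser Hausdorff topology is strongly continuous, which is a standard fact.

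The main obstacle, and the step I expect to require the most care, is the second assertion's passage from ``continuous into the weak topology $Y/V$'' to ``continuous into the norm topology of $X/V$''. The key leverage is reflexivity of $X/V$: on a reflexive Banach space a family of operators that is weak-* bounded is norm-bounded, and a net converging weakly with norm-bounded... one must be careful that mere weak continuity of orbit maps does not in general give strong continuity, so one should route the argument through the known general principle — that a locally bounded representation of a locally compact group on a reflexive Banach space, weakly continuous, is strongly continuous (a consequence of the fact that weak continuity plus local boundedness implies weak measurability, hence, by reflexivity and Krein--Šmulian / the Alaoglu-based arguments, strong continuity; cf.\ the argument behind \cite[Corollary 4.4]{Survey}). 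The local boundedness needed here is exactly what the Baire-category argument in $X/V$ (using completeness of $X/V$) supplies, and the separability established in the first part is what makes the measurability/Baire arguments go through cleanly.
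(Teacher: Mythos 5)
Your proof of the separability assertion is correct and is essentially the paper's own argument in slightly different packaging: both transfer the weak-$*$ separability of the dual of the separable Fr\'echet space $Y/V$ to $X/V$ through the continuous injection supplied by Proposition~\ref{prop:31}(iii), and both use reflexivity to convert weak density/separability into norm separability (the paper argues on the dual side, deducing that $X'$ is weak-$*$, hence norm, separable; you argue via weak compactness and metrizability of the unit ball). That part is fine.

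The second assertion is where you depart from the paper, and where your write-up has a genuine gap: the local uniform boundedness of $\{\bar\pi(g):g\in K\}$ on $X/V$ for compact $K\subseteq G$. You justify it by ``the uniform boundedness principle \dots continuity of $\bar\pi$ in the weaker $Y/V$-topology together with separate boundedness'', but the uniform boundedness principle requires, for each $x$, the bound $\sup_{g\in K}\norm{\bar\pi(g)x}_{X/V}<\infty$, and continuity of the orbit map into $Y/V$ only yields compactness (hence boundedness) of the orbit \emph{in $Y/V$}, which does not imply boundedness in $X/V$; likewise a Baire argument on $X/V$ with the sets $\{x\colon \sup_{g\in K}\norm{\bar\pi(g)x}_{X/V}\le n\}$ is circular, since their covering $X/V$ is exactly the pointwise boundedness being sought. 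The step can be repaired, but only by an argument your sketch never identifies: because $X/V$ is reflexive, its closed unit ball is weakly compact, hence closed in $Y/V$, so $g\mapsto\norm{\bar\pi(g)x}_{X/V}$, and therefore the operator norm $g\mapsto\norm{\bar\pi(g)}_{\Lin(X/V)}$, are lower semicontinuous; the operator norm is finite and submultiplicative, so Baire category on the locally compact group $G$ gives a nonempty open set on which it is bounded, and translation together with submultiplicativity propagates the bound to every compact set. Once local boundedness is in hand, your concluding step (norm density, via strong decency plus reflexivity, of the restrictions of $(Y/V)'$ in $(X/V)'$, then the standard ``scalarly/weakly continuous and locally bounded implies strongly continuous'' theorem) does finish the proof; note that this last theorem is in substance the Bourbaki continuity criterion that the paper invokes directly, after merely recording that $g\mapsto\langle L,\pi(g)x\rangle$ is continuous for $L\in V^\circ$ and that the image of $V^\circ$ is norm dense in $X'$. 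So your route could be completed and is genuinely more self-contained than the paper's, but as written the crucial boundedness step fails as justified.
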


 Notice that the assumptions are satisfied if $Y=\Hol(D)$ and $X$ is a semi-Hilbert space. 

\begin{proof}
	 By Proposition~\ref{prop:31},  there is a closed subspace $V$ of $Y$ such that $X\cap V$ is the closure of $\Set{0}$ in $X$ and the canonical mapping $L\colon X\to Y/V$ is continuous.  By~\cite[Corollary 1 to Proposition 6 of Chapter III, \S\ 3, No.\ 4]{BourbakiTVS}, the weak dual topology of $(Y/V)'$ is separable. Since $\trasp L$ is continuous with dense image for the weak dual topologies, this proves that $X'$ is separable in the weak dual topology. Since $X$ is reflexive, this proves that $X'$ is separable in the strong dual topology, so that also $X$ is separable.
	
	Next, observe that the mapping $G\ni g\mapsto \langle L, \pi(g) x\rangle$ is continuous for every $x\in X$ and for every $L\in V^\circ$. Since the canonical image of $V^\circ$ is dense in the weak topology of $X'$, hence in the strong topology of $X'$, thanks to Proposition~\ref{prop:31},~\cite[Corollary 2 to Proposition 18 of Chapter VIII, \S\ 4, No.\ 6]{BourbakiInt2} shows that $\pi$ induces a continuous representation of $G$ in $X/V$, whence the result.
\end{proof}

\section{Invariant Reproducing Kernel Hilbert Spaces }\label{sec:3}

We define, for every $s\in  \C$,
\[
B^s_{(\zeta',z')}(\zeta,z)\coloneqq \left(\frac{z-\overline {z'}}{2i} - \langle \zeta\vert \zeta'\rangle  \right)^s
\]
for every $(\zeta,z),(\zeta',z')\in \Uc_{n+1}$, so that
$K_s((\zeta,z),(\zeta',z'))=c_s B^{-n-1-2s}_{(\zeta',z')}(\zeta,z)$
for $s>0$, cf.~\eqref{eq:9}. 

 Observe that, by the results in~\cite{VergneRossi}, $B^s$ is a positive kernel, that is, is the reproducing kernel of some RKHS contained in
$\Hol(\Uc_{n+1})$ if and only if $s\Meg 0$.

\begin{deff}
	Take $s\Meg 0$. Then, we define $\Ac_s$ as the RKHS associated with $B^{-s}$. In other words, $\Ac_s$ is the completion of the space of finite linear combinations of the $B^{-s}_{(\zeta,z)}$, $(\zeta,z)\in \Uc_{n+1}$, endowed with the scalar product defined by
	\[
	\big\langle B^{-s}_{(\zeta,z)}\big\vert B^{-s}_{(\zeta',z')}\big\rangle_{\Ac_s} \coloneqq B^{-s}_{(\zeta',z')}(\zeta,z)
	\]
	for every $(\zeta,z),(\zeta',z')\in D$.
\end{deff}

\begin{prop}\label{prop:33}
	If $s>0$, then $\Ac_s=\widetilde A^2_{(s-n-1)/2}(\Uc_{n+1})$ (with equivalent norms). In addition, $\Ac_0$ is the space of constant functions.
\end{prop}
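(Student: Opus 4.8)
The plan is to verify the two assertions separately, starting with the easy one. For $\Ac_0$: by definition $\Ac_0$ is generated by the functions $B^0_{(\zeta',z')}\equiv 1$, so every finite linear combination of these is constant, and the scalar product formula gives $\langle 1\mid 1\rangle_{\Ac_0}=B^0_{(\zeta',z')}(\zeta,z)=1$; hence $\Ac_0$ is the one-dimensional space of constants with the obvious norm, and no completion is needed. This requires essentially no work.

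For the identification $\Ac_s=\widetilde A^2_{(s-n-1)/2}(\Uc_{n+1})$ when $s>0$, first I would fix the parameter: set $t\coloneqq (s-n-1)/2$, so that $s=n+1+2t$ and the constraint $s>0$ reads $t>-\frac{n+1}{2}$, which is exactly the range in which $\widetilde A^2_t(\Uc_{n+1})$ is defined (Definition before Proposition~\ref{prop:23}, with $p=2$). The key point is that $\widetilde A^2_t(\Uc_{n+1})$ is a Hilbert space of holomorphic functions embedding continuously into $\Hol(\Uc_{n+1})$ — this follows from Proposition~\ref{prop:23}(v) (or from the description via $\Ec$ together with the duality identifications of the Besov spaces $B^{t}_2(\Hb_n,\R_+^*)$) — hence it is a reproducing kernel Hilbert space. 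The strategy is then simply to compute its reproducing kernel and recognize it as $B^{-s}$, up to a positive constant, which by the uniqueness of the RKHS attached to a given kernel forces $\Ac_s=\widetilde A^2_t(\Uc_{n+1})$ with proportional (hence equivalent) norms.

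To compute that reproducing kernel: when $t>0$ one has $\widetilde A^2_t(\Uc_{n+1})=A^2_t(\Uc_{n+1})$ by Proposition~\ref{prop:23}(ii), whose reproducing kernel is the weighted Bergman kernel $K_t$ of~\eqref{eq:9}, namely $K_t((\zeta,z),(\zeta',z'))=c_t\,B^{-(n+1+2t)}_{(\zeta',z')}(\zeta,z)=c_t\,B^{-s}_{(\zeta',z')}(\zeta,z)$; so in that subrange the identification is immediate. For the remaining range $-\frac{n+1}{2}<t\le 0$ one cannot invoke Proposition~\ref{prop:23}(ii), and here is where the argument must do something. I would use Proposition~\ref{prop:23}(iv): the map $\partial_2^k\colon \widetilde A^2_t(\Uc_{n+1})\to \widetilde A^2_{t+k}(\Uc_{n+1})$ is an isomorphism for every $k\in\N$; choosing $k$ with $t+k>0$, the target has reproducing kernel $c_{t+k}B^{-(n+1+2(t+k))}_{(\zeta',z')}$, and transporting this kernel back through $(\partial_2^k)^{-1}$ — equivalently, applying $\overline{\partial_{z'}^{\,k}}$ and $\partial_z^k$ appropriately and integrating, using $\partial_z B^{a}_{(\zeta',z')}=\tfrac{a}{2i}B^{a-1}_{(\zeta',z')}$ — yields that the reproducing kernel of $\widetilde A^2_t(\Uc_{n+1})$ is again a positive constant times $B^{-(n+1+2t)}_{(\zeta',z')}=B^{-s}_{(\zeta',z')}$. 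An alternative, perhaps cleaner, route is to read off the kernel directly from the extension operator $\Ec$: the inner product on $\widetilde A^2_t$ is (by definition and the duality $B^{-t}_2=(B^{t}_2)'$) the one transported from $B^{-t}_2(\Hb_n,\R_+^*)$, and pairing $S_{(\zeta,z)}$ against itself in that Besov inner product reproduces a constant multiple of $B^{-s}_{(\zeta',z')}(\zeta,z)$; this is a short computation with the Fourier/Laplace representation $\Gc$.

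**Main obstacle.** The routine part — matching powers and collecting constants — is harmless; the genuine content is making the kernel computation legitimate for $-\frac{n+1}{2}<t\le 0$, where $\widetilde A^2_t$ is \emph{not} an $L^2$ Bergman space and its inner product is only defined abstractly through $\Ec$ and the Besov duality. I expect the cleanest path is to reduce to $t>0$ via the isomorphism $\partial_2^k$ of Proposition~\ref{prop:23}(iv) and check that this isomorphism is, up to an explicit positive scalar, a unitary onto a rescaling — equivalently that it intertwines the two reproducing-kernel structures correctly — which is where one must be careful, since $\partial_2^k$ is not itself an isometry. Keeping track of that scalar (it will be a ratio of the constants $c_{t+k}$ and the normalization built into the definition of $\Ac_s$) is the one place where a small but real computation is unavoidable.
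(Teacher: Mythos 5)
Your proposal is correct and follows essentially the same route as the paper: the subrange $t=(s-n-1)/2>0$ is settled at once by Proposition~\ref{prop:23}(ii) and the formula~\eqref{eq:9} for $K_t$, and the remaining range is reduced to it through the isomorphism $\partial_2^k$ of Proposition~\ref{prop:23}(iv) combined with the identity $\partial_2 B^{a}_{(\zeta',z')}=\tfrac{a}{2i}B^{a-1}_{(\zeta',z')}$, which is exactly the paper's mechanism. The one point to keep straight is that the norm of $\widetilde A^2_t(\Uc_{n+1})$ is canonical only up to equivalence, so ``its'' reproducing kernel must be computed for a chosen equivalent inner product (most naturally the one pulled back through $\partial_2^k$, where your explicit positive scalar appears), and one must still check that the candidate kernel functions $B^{-s}_{(\zeta,z)}$ actually lie in $\widetilde A^2_t(\Uc_{n+1})$ with no polynomial ``integration constants'' left over; the paper handles these same points by showing that $\partial_2^k$ maps $\Ac_s$ onto $\Ac_{s+2k}$ via the identity $(\partial_2^k\otimes\overline{\partial_2^k})B^{-s}=cB^{-s-2k}$ and by quoting the membership $B^{-s}_{(\zeta,z)}\in \widetilde A^2_{(s-n-1)/2}(\Uc_{n+1})$ from~\cite{CalziPeloso}, which your sketch would either reprove (via decay in $A^\infty_{s/2}$) or likewise cite.
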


 Notice that~\cite{Arcozzietal} provides yet another description of $\Ac_s$, for $s>0$, which is somewhat analogous to the one which may be obtained by means of (v) of Proposition~\ref{prop:23}. 

\begin{proof}
	Observe that $B^{-s}= c_{(s-n-1)/2}^{-1} K_{(s-n-1)/2} $ for $s>n+1$ (cf.~\eqref{eq:9}), and that $K_{(s-n-1)/2}$ is the reproducing kernel of $A^2_{(s-n-1)/2}(\Uc_{n+1})=\widetilde A^2_{(s-n-1)/2}(\Uc_{n+1})$ (cf.~Proposition~\ref{prop:23}), so that the assertion is clear in this case. 
	Next, take $s>0$, and take $k\in \N$ so large that $s+2 k>n+1$. Observe that\footnote{Recall that $\partial_2 f(\zeta,z)=\frac{\dd}{\dd z} f(\zeta,z)$.} 
	\[
	(\partial_2^k\otimes \overline{\partial_2^k}) B^{-s}= (2i)^{-2 k} (-s)\cdots (-s-2 k) B^{-s-2 k },
	\]
	so that $\partial_2^k$ induces a continuous linear mapping of
        $ \Ac_s$ \emph{onto} $\Ac_{s+2k}$. In addition,
        $B^{-s}_{(\zeta,z)}\in \widetilde A^2_{(s-n-1)/2}(\Uc_{n+1})$
        for every $(\zeta,z)\in \Uc_{n+1}$, thanks to~\cite[Lemma 5.15  and Example 2.12]{CalziPeloso}, and  $\partial_2^k$ induces an isomorphism of $\widetilde A^2_{(s-n-1)/2}(\Uc_{n+1})$ onto $\widetilde A^2_{(s+2k-n-1)/2}(\Uc_{n+1})=\Ac_{s+2k}$ by Proposition~\ref{prop:23}. It then follows that $\Ac_s=\widetilde A^2_{(s-n-1)/2}(\Uc_{n+1})$.
	
	Finally, it is clear that $\Ac_0$ is the space of constant functions.
\end{proof}

\begin{teo}\label{teo:1}
	Take $s\in \R$. If $s \Meg 0$, then $\Ac_{s}$ is $\widetilde \Aut$-$\widetilde U_s$-invariant with its norm.
	
 Conversely, let $H$ be a non-trivial Hilbert space continuously embedded in $\Hol(\Uc_{n+1})$ such that $U_s$ induces a bounded (resp.\ isometric) representation of $G_T $ in $H$.  
	Then, $  s\Meg 0$ and $H=\Ac_{s}$ with equivalent (resp.\ proportional) norms. 
\end{teo}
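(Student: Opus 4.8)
The forward direction is essentially Proposition~\ref{prop:24}(ii) together with Proposition~\ref{prop:33}: for $s>0$ we have $\Ac_s=\widetilde A^2_{(s-n-1)/2}(\Uc_{n+1})$, which is $\widetilde\Aut$-$\widetilde U_s$-invariant with its norm (here $2((s-n-1)/2+(n+1)/2)=s$), and for $s=0$ the space $\Ac_0$ of constants is trivially $\widetilde U_0$-invariant since $\widetilde U_0(\phi)$ is just composition. So the content is the converse, and I would proceed as follows.

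First, reduce to the reproducing kernel. Let $H$ be a non-trivial RKHS of holomorphic functions on $\Uc_{n+1}$ with reproducing kernel $\Kc$, and suppose $U_s$ induces a bounded representation of $G_T$ on $H$. Boundedness of $U_s(\phi)$ with bounded inverse $U_s(\phi)^{-1}=U_s(\phi^{-1})$ means $[U_s(\phi)\otimes\overline{U_s(\phi)}]\Kc$ is the reproducing kernel of $H$ with an \emph{equivalent} norm; but a Hilbert space of functions has a \emph{unique} reproducing kernel, so one cannot immediately conclude invariance of $\Kc$ from mere boundedness. The standard device is to average: since $G_T$ is solvable, hence amenable (or invoke Proposition~\ref{prop:27} for $\Aff$), and acts transitively on $\Uc_{n+1}$, one forms an invariant mean over $G_T$ applied to the family of (uniformly equivalent) scalar products $\langle U_s(\phi)\,\cdot\,\vert U_s(\phi)\,\cdot\,\rangle_H$ to produce a genuinely $U_s$-invariant scalar product on $H$ equivalent to the original one; in the isometric case this averaging is unnecessary. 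Thus we may assume $H$ carries a $U_s$-invariant norm, i.e. $[U_s(\phi)\otimes\overline{U_s(\phi)}]\Kc=\Kc$ for all $\phi\in G_T$.

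Next, exploit transitivity and the explicit form of $G_T$ from Lemma~\ref{lem:2.1}(iii). Write $\Kc((\zeta,z),(\zeta,z))=g(\zeta,z)$ for the diagonal. The relation $\rho(\phi(\zeta,z))^{-(n+2)}|J\phi(\zeta,z)|^2=\rho(\zeta,z)^{-(n+2)}$ (from Lemma~\ref{invariance:lem} with $s=1/2$) combined with the invariance identity forces $g$ to transform like $\rho^{-\alpha}$ for some exponent, and since $\Hb_n\rtimes G_\delta$ acts simply transitively, evaluating at the base point $(0,i)$ pins down $g(\zeta,z)=g(0,i)\,\rho(\zeta,z)^{-\beta}$ where the dilation action $\widetilde U_s(\delta_R)f=(f\circ\delta_R^{-1})R^{-s}$ identifies $\beta$ with $s$ (up to the normalization in $B^s$). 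In particular $g>0$ everywhere forces $g(0,i)>0$, and comparing growth one reads off $s\Meg 0$: if $s<0$ the putative kernel $\rho^{-s}$ blows up at the boundary in the wrong way / fails the positivity needed for a kernel, and more precisely the comparison with the known kernels of $\Ac_t$ together with the classification in~\cite{VergneRossi} (that $B^{-s}$ is positive definite iff $s\Meg 0$) rules out $s<0$. Having $s\Meg 0$, one then upgrades from the diagonal to the full kernel: a $U_s$-invariant holomorphic reproducing kernel on a domain on which $\Aff$ acts transitively is determined by its diagonal together with holomorphy in the first variable and antiholomorphy in the second (polarization / uniqueness of sesquiholomorphic extension), so $\Kc=c\,B^{-s}$ for some $c>0$, whence $H=\Ac_s$ with proportional norm; tracking the equivalence constant from the averaging step gives the "equivalent norm" statement in the bounded (non-isometric) case.

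The main obstacle I anticipate is the step going from a $U_s$-invariant \emph{diagonal} to the conclusion $\Kc=c\,B^{-s}$, i.e. showing that invariance under the solvable group $G_T$ (which acts transitively but whose stabilizers are trivial, so there is no "compact averaging" to symmetrize in the remaining directions) is enough to rigidify the off-diagonal behaviour. The clean way is: the invariance of the diagonal plus transitivity shows $\Kc((\zeta,z),(\zeta,z))=c\,B^{-s}_{(\zeta,z)}(\zeta,z)$; then consider the sesquiholomorphic function $F=\Kc/B^{-s}$ (well-defined since $B^{-s}$ is nonvanishing), note $F$ is $(\widetilde U_s\otimes\overline{\widetilde U_s})$-invariant for $G_T$ and constant on the diagonal, and use that a separately (anti)holomorphic function invariant under a transitive group action and constant on the diagonal of $\Uc_{n+1}\times\Uc_{n+1}$ must be globally constant — this follows because the diagonal is totally real of maximal dimension, so a sesquiholomorphic function is determined by its restriction there. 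Once $\Kc=c\,B^{-s}$ with $c>0$, positivity of $\Kc$ as a kernel is exactly the statement $s\Meg 0$ from~\cite{VergneRossi}, closing the loop and identifying $H$ with $\Ac_s$.
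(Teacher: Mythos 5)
Your converse argument is essentially the paper's proof: average the inner product over the amenable group $G_T$ to obtain an equivalent (or, in the isometric case, the original) $U_s$-invariant scalar product; use the $(U_s\otimes\overline{U_s})$-invariance of the new reproducing kernel $K$ together with the simple transitivity of $G_T$ to get $K=C'B^{-s}$ on the diagonal; pass to the full kernel by sesquiholomorphy (the diagonal being maximal totally real); and then invoke the positivity criterion of~\cite{VergneRossi} to conclude $s\Meg 0$ and $H=\Ac_s$ with proportional (hence, after undoing the averaging, equivalent) norms. One caveat: your intermediate claim that for $s<0$ the diagonal $\rho^{-s}$ ``blows up at the boundary in the wrong way'' is not a valid exclusion (for $s<0$ it in fact tends to $0$ at the boundary); the only argument that works, and the one you correctly settle on at the end, is that the reproducing kernel $C'B^{-s}$ must be positive, which by~\cite{VergneRossi} forces $s\Meg 0$ --- exactly as in the paper.

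The forward direction, as you state it, has a genuine gap in the range $0<s\meg n+1$: Proposition~\ref{prop:33} identifies $\Ac_s$ with $\widetilde A^2_{(s-n-1)/2}(\Uc_{n+1})$ only up to \emph{equivalent} norms, and Proposition~\ref{prop:24}(ii) gives only uniform boundedness of the operators $\widetilde U_s(\phi)$ on $\widetilde A^2_{(s-n-1)/2}(\Uc_{n+1})$, not isometry; so this route yields bounded invariance of $\Ac_s$, not invariance ``with its norm''. (Only for $s>n+1$ does Proposition~\ref{prop:24}(i) apply to $A^2_{(s-n-1)/2}(\Uc_{n+1})$, whose norm is proportional to the RKHS norm of $\Ac_s$.) The paper instead argues at the level of kernels: isometric invariance of $\Ac_s$ is, by definition of the space, equivalent to the identity $(\widetilde U_s(\phi)\otimes\overline{\widetilde U_s(\phi)})B^{-s}=B^{-s}$, which for $s>n+1$ is the transformation rule of the weighted Bergman kernel and extends to all real $s$ by taking powers. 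Note that this identity, for arbitrary real $s$, is also what your converse implicitly uses when you let $B^{-s}$ transform under $G_T$ to pin down the diagonal of $K$; proving it once closes both halves of your argument.
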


In comparison with~\cite[Theorem 5.1]{Arazy}, we observe that our invariance condition is considerably weaker, since we require invariance only on $G_T$ and not on $\Aut(\Uc_{n+1})$. The proof is essentially the same, though.
We point out that we replace the `weak integrability' condition
considered in~\cite[Theorem 5.1]{Arazy} with the requirement that $H$
embed continuously into $\Hol(\Uc_{n+1})$. However, as  we shall prove in Proposition~\ref{prop:38}, this `weak integrability' condition is actually  equivalent to the continuity of the embedding of $H$ into $\Hol(\Uc_{n+1})$, thanks to Cauchy's theorem (cf.~also the proof of~\cite[Theorem 5.1]{Arazy}, on which Proposition~\ref{prop:38} is based).

\begin{proof}
	\textsc{Step I.} Let us  prove that, if $s \Meg 0$, then $\Ac_{ s}$ and its norm are $\widetilde U_{s}$-invariant. Notice that, by the definition of $\Ac_s$, this is equivalent to the relation
	\[
	(\widetilde U_{s}(\phi)\otimes \overline{\widetilde U_{s}(\phi)}) B^{-s}=B^{-s}
	\]
	for every   $\phi\in \widetilde \Aut$. This latter fact is clear
        for $s>n+1$, thanks to Lemma~\ref{invariance:lem}, and
        then extends to general $s$ by taking powers.

	\textsc{Step II.} Take $H$  as in the statement, and define
	\[
	C\coloneqq \sup_{\phi\in G_T} \norm{U_{s}(\phi)}_{\Lin(H)},
	\]
	so that $C$ is finite (resp.\ $1$). Take a right-invariant mean $\mathtt{m}$ on $\ell^\infty(G_T)$ (cf.~Proposition~\ref{prop:27}), and define
	\[
	\langle f\vert g\rangle'_H\coloneqq \mathtt{m}(\phi\mapsto \langle U_{ s}(\phi)f\vert U_{s} (\phi) g\rangle_H  )
	\]
	for every $f,g\in H$, so that $\langle\,\cdot\,\vert \,\cdot\,\rangle'_H$ is a well-defined $U_{s}$-invariant scalar product on $H$. In addition,
	\[
	\frac{1}{C}\norm{f}_H\meg\norm{f}'_H\meg C \norm{f}_H
	\]
	for every $f\in H$.
	Let $K$ be the reproducing kernel of $H$, with respect to the scalar product $\langle\,\cdot\,\vert \,\cdot\,\rangle'_H$, and observe that $K$ is $(U_{ s}\otimes \overline{U_{ s}})$-invariant. Hence, the mapping 
	\[
	((\zeta,z),(\zeta',z'))\mapsto K((\zeta,z),(\zeta',z')) B^{\vect s}_{(\zeta',z')}(\zeta,z)
	\]
	is invariant under composition by the elements of $G_T$, thanks to \textsc{step I}. Since $G_T$ acts transitively on $\Uc_{n+1}$, it then follows that there is a constant $C'>0$ such that
	\[
	K((\zeta,z),(\zeta,z))=C' B^{-s}_{(\zeta,z)}(\zeta,z)
	\]
	for every $(\zeta,z)\in \Uc_{n+1}$. Since the function 
	\[
	\Uc_{n+1}\times c(\Uc_{n+1})\ni ((\zeta,z),(\zeta',z'))\mapsto K((\zeta,z),c(\zeta',z')) B^{\vect s}_{c(\zeta',z')}(\zeta,z)\in\C
	\]
	is holomorphic (where $c\colon (\zeta,z)\mapsto \overline{( \zeta, z)}$), we see that
	\[
	K((\zeta,z),(\zeta',z'))=C' B^{-  s}_{(\zeta',z')}(\zeta,z)
	\]
	for every $(\zeta,z),(\zeta',z')\in \Uc_{n+1}$. In particular,
        $B^{-  s}$ is a positive kernel, so that $s\Meg 0$ (cf.~the remarks at the beginning of Section~\ref{sec:3}). It follows that  $H=\Ac_{ s}$ and
	\[
	\norm{f}'_H=C'^{-1/2}\norm{f}_{\Ac_{ s}}
	\]
	for every $f\in H$. 
\end{proof}

\begin{oss}\label{oss:2}
	Take $s,s'>0$. Then, the unitary representations $U_s$ and $U_{s'}$ of $\Aff(\Uc_{n+1})$ into $\Ac_s$ and $\Ac_{s'}$, respectively, are unitary equivalent. An intertwining operator is given by the Riemann--Liouville operator $f\mapsto f* I^{(s-s')/2}$, where $I^{(s-s'/2)}$ is the tempered distribution supported on $\R_+$ whose Laplace transform is $(\,\cdot\,)^{(s'-s)/2}$ on $\R_+^*$.
	
	The unitary representations $\widetilde U_s$ and $\widetilde
        U_{s'}$ of   $\widetilde \Aut(\Uc_{n+1})$   into $\Ac_s$
        and $\Ac_{s'}$, respectively, are   unitarily  equivalent if and only if $s=s'$.
\end{oss}

\begin{proof}
	The first assertion follows from Propositions~\ref{prop:23} and~\ref{prop:33} and from the homogeneity of the distribution $I^{(s-s'/2)}$.
	
	For what concerns the second assertion, observe that, since $U_s$ and $U_{s'}$, restricted to $G_T$, may be considered as irreducible subrepresentations of $\widetilde U_s$ and $\widetilde U_{s'}$, respectively (cf.~Theorem~\ref{teo:1}), if $\widetilde U_s$ and $\widetilde U_{s'}$ are (unitarily) equivalent, then $f\mapsto f* I^{(s-s')/2}$ must be an intertwining operator by Schur's lemma (cf.~\cite[Theorem 2 of \S\ 21, No.\ 3]{Naimark}). In addition, if $\widetilde K$ is the stabilizer of $(0,i)$ in $\widetilde \Aut$, then the $\widetilde K$-$U_s$-orbit of $B^{-s}_{(0, i)}$ in $ \Ac_s$ is one-dimensional by Lemma~\ref{invariance:lem}. On the contrary, $B^{-s}_{(0,i)}*I^{(s-s')/2}=c_{s,s'} B^{(s+s')/2}_{(0,i)}$ for some non-zero $c_{s,s'}$ (cf.~the proof of~\cite[Lemma 5.15]{CalziPeloso}), and the $\widetilde K$-$U_{s'}$-orbit of $B^{(s+s')/2}_{(0,i)}$ is \emph{not} one-dimensional unless $s=s'$ (either use the proof of Theorem~\ref{teo:1} to show that $\widetilde U_{s'}(\iota) B^{(s+s')/2}_{(0,i)}$ is not a scalar multiple of $B^{(s+s')/2}_{(0,i)}$, or use Proposition~\ref{prop:38} transferring the problem to the bounded realization of $D$). 
\end{proof}

We conclude this section with some related remarks that will prove useful in the following sections.

\begin{deff}
	For every $s\in \R$ and for every $k\in \N$ such that $s+2 k\Meg0$, we  define\footnote{Recall that $\partial_2 f(\zeta,z)=\frac{\dd}{\dd z}f(\zeta,z)$.}
	\[
	\Ac_{s,k}\coloneqq \Set{f\in \Hol(\Uc_{n+1})\colon \partial_2^k f\in \Ac_{s+2k}},
	\]
	endowed with the corresponding prehilbertian seminorm.

    We define   $\widehat\Ac_{s,k}$ as the Hausdorff space associated with $\Ac_{s,k}$, that is, $\Ac_{s,k}/\ker\partial_2^k$.
\end{deff}

We remark that, using \cite[Theorem 5.5]{Arcozzietal}
it is easy to check that 
 $\widehat{\Ac}_{0,1}$  is canonically isomorphic to the classical Dirichlet space, for all $n\ge0$.

\begin{cor}\label{cor:1}
	Take $s\in \R$ and $k\in \N$. If $s+2 k\Meg 0$, then $\Ac_{s,k}$ is a semi-Hilbert space and is $\Aff(\Uc_{n+1})$-$U_s$-invariant with its seminorm.
	
	Conversely, let  $H$ be a semi-Hilbert space of holomorphic functions.  Assume that the following hold:
	\begin{itemize}
		\item  the canonical mapping $H\to \Hol(\Uc_{n+1})/\ker \partial_2^k$ is continuous and non-trivial;
		
		\item $U_s$ induces a bounded (resp.\ isometric) representation of $G_T$ in $H$.
	\end{itemize} 
	Then, $s+2k\Meg 0$, $	H\subseteq \Ac_{s,k}$ continuously,	and the canonical mapping $H/(H\cap \ker \partial_2^k )\to \widehat \Ac_{s,k} $ is an isomorphism (resp.\ a multiple of an isometry). 
\end{cor}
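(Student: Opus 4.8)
The forward direction should follow by combining the intertwining property of $\partial_2^k$ with Theorem~\ref{teo:1}. First I would record the (elementary) intertwining identity $\partial_2^k \circ U_s(\phi) = U_{s+2k}(\phi) \circ \partial_2^k$ for $\phi \in \Aff(\Uc_{n+1})$; this is the rank-$1$ analogue of~\cite[Theorem 6.4]{Arazy} and for affine $\phi$ it is a direct computation using $\partial_2^k (f\circ\phi^{-1}) \cdot (\text{Jacobian factor})$, with the point being that an affine map does not mix the derivative in $z$ with the lower-order data in a way that obstructs the identity (one must be a little careful with the $n>0$ case, where $\Aff \neq G_T$, but the Heisenberg translations and the $U(n)$-part behave well). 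Granting this, if $s+2k\Meg 0$ then $\Ac_{s+2k}$ is $\widetilde U_{s+2k}$-invariant with its norm by Theorem~\ref{teo:1} (Step I), hence $U_{s+2k}$-invariant on $G_T$, and therefore $\Ac_{s,k} = (\partial_2^k)^{-1}(\Ac_{s+2k})$ is $U_s$-invariant with its seminorm; completeness of the seminormed space $\Ac_{s,k}$ follows because $\partial_2^k\colon \Hol(\Uc_{n+1})\to\Hol(\Uc_{n+1})$ is continuous, so $\Ac_{s,k}$ is the preimage of a Hilbert space under a continuous map into which it injects modulo $\ker\partial_2^k$.

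For the converse, let $H$ satisfy the two hypotheses. The plan is to push $H$ forward by $\partial_2^k$ and apply Theorem~\ref{teo:1} there. Set $V\coloneqq H\cap\ker\partial_2^k$ and let $\overline H \coloneqq H/V$ with the induced Hilbert norm; by the first hypothesis the map $\partial_2^k$ descends to a \emph{continuous injective} linear map $\overline H \to \Hol(\Uc_{n+1})$ (injectivity is by definition of $V$; continuity because $H\to\Hol(\Uc_{n+1})/\ker\partial_2^k$ is continuous and $\partial_2^k$ induces a topological embedding of $\Hol(\Uc_{n+1})/\ker\partial_2^k$ into $\Hol(\Uc_{n+1})$). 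Transport the norm of $\overline H$ to its image $H_1\coloneqq\partial_2^k(H)\subseteq\Hol(\Uc_{n+1})$, making $H_1$ a Hilbert space continuously embedded in $\Hol(\Uc_{n+1})$, non-trivial by the non-triviality clause. The intertwining identity shows $U_{s+2k}$ acts on $H_1$ with the same operator norms as $U_s$ on $H$ (modulo $V$, which is $U_s$-invariant since it is a closed $G_T$-invariant subspace — closedness of $\ker\partial_2^k$ in $\Hol$ is what makes $V$ closed in $H$), so $U_{s+2k}$ induces a bounded (resp.\ isometric) representation of $G_T$ on $H_1$. Theorem~\ref{teo:1} then forces $s+2k\Meg 0$ and $H_1 = \Ac_{s+2k}$ with equivalent (resp.\ proportional) norms. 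Unwinding the definitions: $f\in H$ implies $\partial_2^k f\in\Ac_{s+2k}$, i.e.\ $H\subseteq\Ac_{s,k}$, and the embedding is continuous because $\|f\|_{\Ac_{s,k}} = \|\partial_2^k f\|_{\Ac_{s+2k}} \asymp \|\partial_2^k f\|_{H_1} = \|f + V\|_{\overline H} \meg \|f\|_H$; finally $H/V \to \widehat\Ac_{s,k}$ is the composite of the isometric identification $\overline H \cong H_1$ with $\Ac_{s+2k}\cong\widehat\Ac_{s,k}$, hence an isomorphism (resp.\ a multiple of an isometry).

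The step I expect to require the most care is verifying that $H_1 = \partial_2^k(H)$, with the transported norm, is genuinely a Hilbert space \emph{continuously embedded in} $\Hol(\Uc_{n+1})$ — that is, that the continuity hypothesis on $H\to\Hol/\ker\partial_2^k$ really does upgrade to continuity of $H_1\hookrightarrow\Hol$. This hinges on the fact that $\partial_2^k$ induces a topological isomorphism of $\Hol(\Uc_{n+1})/\ker\partial_2^k$ onto the closed subspace $\partial_2^k(\Hol(\Uc_{n+1}))$ of $\Hol(\Uc_{n+1})$, which is where surjectivity of $\partial_2$ on $\Hol(\Uc_{n+1})$ (antiderivatives exist, as $\Uc_{n+1}$ is simply connected in the relevant sense) and the open mapping theorem for Fréchet spaces come in. A secondary point to check carefully is that $V = H\cap\ker\partial_2^k$ is closed in $H$ and $G_T$-invariant so that the quotient norm on $H/V$ is again a genuine Hilbert norm and the averaged scalar product argument of Theorem~\ref{teo:1} applies downstairs; this is immediate once one notes $\ker\partial_2^k$ is closed in $\Hol(\Uc_{n+1})$ and the embedding $H\to\Hol/\ker\partial_2^k$ is continuous.
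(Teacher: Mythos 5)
Your proposal is correct and takes essentially the same route as the paper's proof: the affine intertwining identity (the paper's Lemma~\ref{lem:1}), the fact that $\partial_2^k$ induces an isomorphism of $\Hol(\Uc_{n+1})/\ker \partial_2^k$ onto $\Hol(\Uc_{n+1})$ (the paper cites Tr\`eves; note that for $n>0$ it is convexity of $\Uc_{n+1}$, rather than simple connectedness, that yields holomorphic antiderivatives in the last variable), and Theorem~\ref{teo:1} applied to $\partial_2^k(H)$ endowed with the norm transported from $H/(H\cap\ker\partial_2^k)$. One small imprecision: completeness of $\Ac_{s,k}$ requires this surjectivity of $\partial_2^k$ on $\Hol(\Uc_{n+1})$ (to realize the $\Ac_{s+2k}$-limit of $\partial_2^k f_j$ as $\partial_2^k f$ for some holomorphic $f$), not merely its continuity, but since you invoke exactly that surjectivity in the converse part, nothing essential is missing.
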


We first need the following lemma.

\begin{lem}\label{lem:1}
	Take $s\in \R$ and $k\in \N$. Then, for every $f\in \Hol(\Uc_{n+1})$ and for every $\phi\in \Aff(\Uc_{n+1})$,
	\[
	(U_{s+2k}(\phi))\partial_2^k f=\partial_2^k (U_{ s}(\phi) f).
	\]
\end{lem}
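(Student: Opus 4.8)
The plan is to reduce the statement to the two types of generators of $\Aff(\Uc_{n+1})$ provided by Lemma~\ref{lem:2.1}(ii), namely the Heisenberg translations in $\Hb_n$ and the linear automorphisms $(\zeta,z)\mapsto (RU\zeta,R^2z)$ with $R>0$, $U\in U(n)$, and then to observe that the identity is multiplicative in $\phi$ so that it is enough to check it on generators. To keep track of the unimodular ambiguities, I would actually phrase the whole computation directly in terms of the explicit formula
\[
U_s(\phi)f(\zeta,z)=f(\phi^{-1}(\zeta,z))\,\bigl|(J\phi^{-1})^{s/(n+2)}(0,i)\bigr|,
\]
so that the factor multiplying $f\circ\phi^{-1}$ is a \emph{constant} (independent of $(\zeta,z)$), depending only on $\phi$ and $s$; call it $a_s(\phi)$. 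The point is then that $\partial_2^k$ commutes with composition by an affine map up to the $k$-th power of the $z$-component of the linear part.

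First I would treat a Heisenberg translation $\tau=(\eta,t)\in\Hb_n$, acting by $\tau\cdot(\zeta,z)=(\zeta+\eta,\,z+t+i|\eta|^2+2i\langle\zeta\vert\eta\rangle)$. Its complex Jacobian is constant and equal to $1$, so $a_s(\tau)=a_{s+2k}(\tau)=1$ for all $s$ and $k$; the map $\tau^{-1}$ is again a Heisenberg translation, and for any $g\in\Hol(\Uc_{n+1})$ one has $\partial_2(g\circ\tau^{-1})=(\partial_2 g)\circ\tau^{-1}$ because the $z$-component of $\tau^{-1}$ is an affine function of $(\zeta,z)$ whose coefficient in $z$ is $1$. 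Iterating gives $\partial_2^k(f\circ\tau^{-1})=(\partial_2^k f)\circ\tau^{-1}$, which is exactly $U_{s+2k}(\tau)\partial_2^k f=\partial_2^k(U_s(\tau)f)$.

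Next I would treat a linear automorphism $\ell\colon(\zeta,z)\mapsto(RU\zeta,R^2z)$. Then $\ell^{-1}(\zeta,z)=(R^{-1}U^{-1}\zeta,R^{-2}z)$, its complex Jacobian is the constant $R^{-(n+2)}$ (determinant of a block-diagonal matrix with the $U(n)$-block and the entry $R^{-2}$), so $a_s(\ell)=R^{-s}$. Since the $z$-component of $\ell^{-1}$ is $R^{-2}z$, the chain rule gives $\partial_2(g\circ\ell^{-1})=R^{-2}(\partial_2 g)\circ\ell^{-1}$, hence $\partial_2^k(g\circ\ell^{-1})=R^{-2k}(\partial_2^k g)\circ\ell^{-1}$. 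Therefore
\[
\partial_2^k(U_s(\ell)f)=R^{-s}\,R^{-2k}\,(\partial_2^k f)\circ\ell^{-1}
 = R^{-(s+2k)}(\partial_2^k f)\circ\ell^{-1}=U_{s+2k}(\ell)\partial_2^k f,
\]
which is the desired identity on $\ell$. Finally, writing a general $\phi\in\Aff(\Uc_{n+1})$ as a product of such generators and using that $\phi\mapsto U_s(\phi)$ and $\phi\mapsto U_{s+2k}(\phi)$ are (anti)homomorphisms together with the already-established identity on each generator, the identity propagates to all of $\Aff(\Uc_{n+1})$.

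I do not expect a genuine obstacle here: the lemma is an elementary chain-rule computation, and the only mild subtlety is bookkeeping the constant $a_s(\phi)=|(J\phi^{-1})^{s/(n+2)}(0,i)|$ consistently — in particular that $a_{s+2k}(\phi)=a_s(\phi)\,|(J\phi^{-1})(0,i)|^{2k/(n+2)}$ and that for affine $\phi$ the quantity $|(J\phi^{-1})(0,i)|^{2/(n+2)}$ equals precisely the factor $R^{-2}$ by which $\partial_2$ transforms under composition with $\phi^{-1}$. One can also note that, $\partial_2$ being a local operator, it suffices in fact to verify the intertwining on the simply connected group $G_T$ and then extend by the compact factor $U(n)$, but the generator argument above handles everything uniformly.
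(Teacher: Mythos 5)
Your proof is correct and follows essentially the same route as the paper: verify the intertwining relation on the generators of $\Aff(\Uc_{n+1})$ (Heisenberg translations and the linear part, where the paper treats $U(n)$ and the dilations separately while you merge them into one computation) via the chain rule, and then extend by multiplicativity of $U_s$. The only cosmetic slip is that $J\ell^{-1}$ equals $R^{-(n+2)}\det(U^{-1})$ rather than $R^{-(n+2)}$, but since only $\abs{J\ell^{-1}(0,i)}$ enters the definition of $U_s$ this is harmless.
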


\begin{proof}
	The assertion is clear if $\phi\in \Hb_n$ and also if $\phi\in U(n)$ (acting on the fisrt variable). Then, assume that $\phi\colon (\zeta,z)\mapsto (R\zeta,R^2z)$ for some $R>0$. Then,
	\[
	\partial_2^k(f\circ \phi^{-1})=R^{-2 k} (\partial_2^k f)\circ \phi^{-1}
	\]
	so that the assertion follows, since $R^{- 2k}=  
        \abs{(J\phi^{-1})^{2k/(n+2)}(0,i)} $.
\end{proof}

\begin{proof}[Proof of Corollary~\ref{cor:1}]
	Observe that the mapping $\partial_2^k\colon \Hol(\Uc_{n+1})/\ker \partial_2^k\to \Hol(\Uc_{n+1})$ is an isomorphism (onto), thanks to~\cite[Theorem 9.4]{Treves}.
	On the one hand, this implies that $\partial_2^k$ maps $\Ac_{s,k}$ onto $\Ac_{s+2k}$, so that $\Ac_{s,k}$ is complete. The invariance of $\Ac_{s,k}$ then follows from Theorem~\ref{teo:1} and Lemma~\ref{lem:1}.
	
	On the other hand, this implies that the canonical mapping $H\to \Hol(\Uc_{n+1})/\ker \partial_2^k$ is continuous and non-trivial if and only if the linear mapping $\partial_2^k\colon H\to \Hol(\Uc_{n+1})$ is continuous and non-trivial. Therefore, using Lemma~\ref{lem:1}, we see that $\partial_2^k(H)$ is a Hilbert space which embeds continuously into $\Hol(\Uc_{n+1})$ and in which $U_{s+2 k}$ induces a bounded (resp.\ unitary) representation of $G_T$. Then, Theorem~\ref{teo:1} implies that  $\partial_2^k(H)=\Ac_{s+2 k}$ with an equivalent (resp.\ proportional) norm, whence the result.
\end{proof}

\section{Invariant Spaces for $n=0$}\label{sec:4bis}

In this section, we assume that $n=0$, and we characterize $U_s$-  and $\widetilde U_s$-invariant strongly decent complete prehilbertian spaces on $\Uc_1=\C_+$. 

We begin by describing the closed $\Aff(\C_+)$- and $\widetilde U_s$-invariant subspaces of $\Hol(\C_+)$. We denote by $\Pc^k(\C)$, or simply $\Pc^k$, the space of holomorphic polynomials on $\C$ of degree $<k$.

Observe  that a vector subspace of $\Hol(\C_+)$ is
$\Aff(\C_+)$- $U_s$-invariant if and only if it is
 $\Aff(\C_+)$-$U_0$-invariant, so that we shall simply say 
$\Aff(\C_+)$-invariant in this case.

 We observe explicitly that here and in Section~\ref{sec:5} we shall first characterize the closed $\Aff$- and $\widetilde U_s$-invariant vector subspaces of $\Hol$. This classification will be fundamental in the characterization of $U_s$-invariant semi-Hilbert spaces of holomorphic functions, and may actually be interpreted as a description of certain mean-periodic functions in $\Hol$. Since the proof in the case $n>0$ is not simpler to the one needed for general Siegel domains (of type II), we shall defer the proof of these results until Section~\ref{sec:app}, where the general case is handled for future reference. Here and in Proposition~\ref{prop:30} we shall limit ourselves to specializing the general results in this context, as well as providing a more precise description of the resulting spaces.

\begin{prop}\label{prop:28}
	Let $V$ be a \emph{proper} closed subspace of
        $\Hol(\C_+)$ and take $s\in \R$. Then, $V$ is  
        $\Aff(\C_+)$-invariant if and only if $V=\Pc^k$ for some $k\in \N$.
	
	In addition, $V$ is $\widetilde \Aut$-$\widetilde U_s$-invariant if and only if $V=\Set{0}$ or $s\in -\N$ and $V=\Pc^{1-s}$.
\end{prop}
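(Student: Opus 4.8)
The plan is to reduce everything to an explicit description of the $\Aff(\C_+)$-invariant subspaces. First I would establish the first (affine) statement. The ``if'' direction is immediate: $\Pc^k$ is finite-dimensional, hence closed, and is preserved by each $U_0(\phi)$ for $\phi\in\Aff(\C_+)$ since composing a polynomial of degree $<k$ with an affine map $z\mapsto Rz+b$ (with $R>0$, $b\in\R$, which by Lemma~\ref{lem:2.1}(i)--(ii) are exactly the elements of $\Aff(\C_+)=\Hb_0\rtimes GL(\C_+)$) again yields a polynomial of degree $<k$. For the ``only if'' direction, let $V$ be a proper closed $\Aff(\C_+)$-invariant subspace. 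Since $V$ is invariant under horizontal translations $\tau_b\colon z\mapsto z+b$ and under dilations $\delta_R\colon z\mapsto R^2 z$, I would extract from the translation-invariance a spectral/differential structure: differentiating the orbit $b\mapsto f(\,\cdot\,+b)$ at $b=0$ shows $V$ is closed under $\partial_2=\frac{d}{dz}$ (using that $V$ is closed in $\Hol(\C_+)$ and that difference quotients converge in $\Hol$). Dilation-invariance then forces $V$ to be ``graded''. The cleanest route is: if $V\neq\Hol(\C_+)$, pick $f\in\Hol(\C_+)\setminus V$; I claim $V\subseteq\Pc^k$ for some $k$. Indeed, since $\partial_2 V\subseteq V$, either $V$ contains a nonzero constant and then, combined with $\partial_2$-invariance and the fact that $V$ is a module over the translations, one builds up all polynomials and (by closedness and density of polynomials) all of $\Hol(\C_+)$ — contradiction — so actually I should argue the other way: I would show any proper closed $\partial_2$- and translation-invariant subspace is contained in some $\Pc^k$, then that dilation-invariance together with being $\partial_2$-invariant pins $V$ down to exactly $\Pc^k$ (a $\partial_2$-stable dilation-graded subspace of $\Pc^k$ containing a polynomial of exact degree $k-1$ must be all of $\Pc^k$). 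The key input that makes this work is that polynomials are dense in $\Hol(\C_+)$ and that a proper closed subspace cannot ``see'' arbitrarily high derivatives without being everything; the main obstacle is making precise why translation-invariance plus closedness forbids $V$ from being an infinite-dimensional proper subspace — I expect to invoke the density of $\Pc^k$'s union and a dimension/degree argument, possibly citing the general result in Section~\ref{sec:app} (Proposition~\ref{prop:6}) that every closed $U_0$-invariant subspace of $\Hol(\Uc_{n+1})$ is the closure of the polynomials it contains, which immediately reduces the classification to $\Aff$-invariant subspaces of polynomials.

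With the first statement in hand, the second follows by a bootstrapping comparison. Observe that $\widetilde U_s(\phi)f$ and $U_0(\phi)f$ differ, for $\phi\in\Aff(\C_+)$ lifted to $\widetilde\Aut$, only by multiplication by a nowhere-vanishing holomorphic function (a power of a Jacobian), and that on the affine group this factor is a nonzero constant for translations and unitaries and a positive scalar for dilations; hence $\widetilde U_s$-invariance of $V$ under the affine subgroup is equivalent to $\Aff(\C_+)$-invariance. Therefore a closed $\widetilde\Aut$-$\widetilde U_s$-invariant $V$ is in particular $\Aff(\C_+)$-invariant, so $V=\Hol(\C_+)$, $V=\{0\}$, or $V=\Pc^k$ for some $k\in\N$. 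The case $V=\Hol(\C_+)$ is excluded (it is not proper, but more to the point one must also check it is genuinely $\widetilde U_s$-invariant only for appropriate $s$; actually $\Hol(\C_+)$ is always $\widetilde U_s$-invariant, so I would instead simply note the proposition only concerns \emph{proper} $V$ in its first line and handle $\{0\}$ trivially). So the real content is: \emph{for which $s$ and $k$ is $\Pc^k$ invariant under the full group $\widetilde\Aut$, equivalently under the inversion $\iota$ and its lifts?} Using $\iota\colon z\mapsto -1/z$ with $(J\iota^{-1})^{s/2}$ a power of $z$ (up to constants, by the analogue of \eqref{eq:4} in dimension $n=0$, namely $J\iota(z)=z^{-2}$), one computes $\widetilde U_s(\iota)(z^j)=$ const$\cdot\,z^{j}\cdot z^{-s}\cdot(\text{power})$; carefully, $\widetilde U_s(\iota)f(z)=f(-1/z)\,(Jz\mapsto -1/z)^{s/2}$ evaluated via the lift, giving a constant times $z^{-s}f(-1/z)$. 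Applying this to $f(z)=z^j$, $0\le j\le k-1$, yields a constant times $z^{-s-j}$, which is a polynomial of degree $<k$ if and only if $-s-j\in\N$ and $-s-j\le k-1$, i.e. $s\in-\N$ and (taking $j=0$ and $j=k-1$) $-s\le k-1$ and $-s-(k-1)\ge 0$; the latter forces $k=1-s$ exactly. Thus $\Pc^k$ is $\widetilde U_s$-invariant iff $s\in-\N$ and $k=1-s$, which is the claimed $V=\Pc^{1-s}$.

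The main obstacle I anticipate is \emph{not} the inversion computation (that is a short explicit calculation once the lift and branch of $(J\phi^{-1})^{s/2}$ are fixed as in the definition of $\widetilde U_s$), but rather the clean proof of the affine classification — specifically, showing that a proper closed $\Aff(\C_+)$-invariant $V\subseteq\Hol(\C_+)$ must be finite-dimensional. I would resolve this by citing Proposition~\ref{prop:6} from Section~\ref{sec:app} (every closed $U_0$-invariant subspace of $\Hol(\Uc_{n+1})$ equals the closure of the polynomials it contains), which is announced earlier in the excerpt; this reduces the problem to classifying the $\Aff(\C_+)$-invariant subspaces $W$ of the polynomial ring $\C[z]$. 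Such a $W$ is invariant under $z\mapsto z+b$ (all $b\in\R$, hence — extending the polynomial identity — all $b\in\C$) and under $z\mapsto Rz$, so $W$ is a $\partial_2$-stable, dilation-graded ideal-like subspace of $\C[z]$; a short argument (a nonzero element of degree $d$ generates, under $\partial_2$ and translations, all of $\Pc^{d+1}$) shows $W=\Pc^k$ with $k=1+\sup\{\deg g: g\in W\}$, or $W=\C[z]$ when the degrees are unbounded, and the latter has closure $\Hol(\C_+)$, i.e. is not proper. This completes the proof. $\square$
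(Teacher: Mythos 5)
Your proposal is correct and follows essentially the same route as the paper: reduce the affine classification to polynomial subspaces via Proposition~\ref{prop:6} (the paper leaves the classification of $\Aff(\C_+)$-invariant subspaces of $\Pc$ as ``clear,'' which your translation/derivative argument justifies), and then use Proposition~\ref{prop:27} together with the explicit computation $\widetilde U_s(\iota)(\,\cdot\,)^j = \text{const}\cdot(\,\cdot\,)^{-j-s}$ from~\eqref{eq:4} to see that $\Pc^k$ is $\widetilde\Aut$-$\widetilde U_s$-invariant exactly when $k=1-s$ with $s\in-\N$.
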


\begin{proof}
	By Proposition~\ref{prop:6}, if $V$ is $\Aff(\C_+)$-invariant, then $V\cap \Pc$ is dense in $V$, where $\Pc$ denotes the space of holomorphic polynomials on $\C$. Since  the $\Aff(\C_+)$-invariant  proper subspaces of $\Pc$ are clearly the $\Pc^k$, $k\in\N$, the first assertion follows.
	
	Now, take $k\in\N$ and observe that, by Proposition~\ref{prop:27}, $\Pc^k$ is  $\widetilde \Aut$-$\widetilde U_s$-invariant if and only if $\widetilde U_s(\iota)\Pc^k= \Pc^k$, where $\iota\colon z \mapsto - 1/z$.\footnote{Notice that this action does \emph{not} determine $\iota$ in $\widetilde \Aut$, but determines its image in $G$, which is sufficient to determine $\widetilde U_s(\iota)$ up to a unimodular constant.} Now, observe that, for every $h=0,\dots, k-1$,
	\[
	\widetilde U_s(\iota)(\,\cdot\,)^h=(-1)^h(\,\cdot\,)^{-h-s}
	\]
	thanks to~\eqref{eq:4}, possibly up to a unimodular constant. Therefore, in order that $\Pc^k$ be $\widetilde \Aut$-$\widetilde U_s$-invariant, it is necessary and sufficient that $\Set{0,\dots, k-1}=\Set{-s,\dots,-(k-1)-s}$, that is, $k=1-s$. The proof is complete.
\end{proof}

Now, we shall indicate an intertwining formula between $\widetilde
U_s$ and  $\widetilde U_{2-s}$, when $s\in-\N$.

\begin{prop}\label{prop:29}
	Take $s\in -\N$. Then, 
	\[
	[\widetilde U_s(\phi) f]^{(1-s)}=\widetilde U_{2-s}(\phi)
        f^{(1-s)}
	\]
	for every $\phi\in \widetilde \Aut$ and for every $f\in \Hol(\C_+)$.
\end{prop}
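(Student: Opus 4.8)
The plan is to verify the intertwining identity on the generators of $\widetilde\Aut(\C_+)$ and then propagate it to the whole group by the homomorphism property. By Proposition~\ref{prop:27}, $\Aut(\C_+)$ is generated by $\Aff(\C_+)$ together with the inversion $\iota\colon z\mapsto -1/z$, so $\widetilde\Aut$ is generated by (a copy of) $\Aff(\C_+)$ and any preimage of $\iota$. First I would observe that both sides of the asserted identity, as functions of $\phi$, behave multiplicatively: if $\phi,\psi\in\widetilde\Aut$, then writing $T_s(\phi)f:=[\widetilde U_s(\phi)f]^{(1-s)}$ and $T'_s(\phi)f:=\widetilde U_{2-s}(\phi)f^{(1-s)}$, the cocycle/representation property $\widetilde U_s(\phi\psi)=\widetilde U_s(\phi)\widetilde U_s(\psi)$ and $\widetilde U_{2-s}(\phi\psi)=\widetilde U_{2-s}(\phi)\widetilde U_{2-s}(\psi)$ show that the set of $\phi$ for which $T_s(\phi)=T'_s(\phi)$ is a subgroup of $\widetilde\Aut$. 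Hence it suffices to check the identity on a generating set.

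For $\phi\in\Aff(\C_+)$ this is exactly Lemma~\ref{lem:1} with $n=0$ and $k=1-s$: indeed $1-s\in\N$ since $s\in-\N$, and Lemma~\ref{lem:1} gives $U_{s+2k}(\phi)\partial_z^k f=\partial_z^k(U_s(\phi)f)$ with $k=1-s$, so $s+2k=2-s$ and $\partial_z^{1-s}=(\,\cdot\,)^{(1-s)}$; the only point to note is that $\widetilde U_s$ restricted to (the lift of) $\Aff(\C_+)$ agrees with $U_s$ up to a unimodular constant, which cancels on both sides since it appears to the same power. It remains to treat the inversion $\iota$. Here the cleanest route is to check the identity on the monomials $z\mapsto z^h$, $h\in\N$, which span a dense subspace of $\Hol(\C_+)$ (Taylor expansion, locally uniform convergence), and both $T_s(\iota)$ and $T'_s(\iota)$ are continuous on $\Hol(\C_+)$ since $\widetilde U_s,\widetilde U_{2-s}$ and differentiation are. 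Using $(J\iota)(z)=z^{-2}$ from~\eqref{eq:4} (with $n=0$), one computes $\widetilde U_s(\iota)(z^h)=\text{const}\cdot z^h\cdot(\iota^{-1}(z))'{}^{?}$ — more precisely $\widetilde U_s(\iota)f(z)=f(-1/z)(z^{-2})^{s/2}$ up to a unimodular constant, so $\widetilde U_s(\iota)(\,\cdot\,)^h=\pm(\,\cdot\,)^{-h-s}$ as already recorded in the proof of Proposition~\ref{prop:28}. Differentiating $1-s$ times gives a constant times $z^{-h-s-(1-s)}=z^{-h-1}$ times a falling factorial in $(-h-s)$. On the other side, $f^{(1-s)}=(z^h)^{(1-s)}$ is a constant times $z^{h-(1-s)}=z^{h+s-1}$, and $\widetilde U_{2-s}(\iota)$ sends $(\,\cdot\,)^{h+s-1}\mapsto\pm(\,\cdot\,)^{-(h+s-1)-(2-s)}=\pm(\,\cdot\,)^{-h-1}$ up to a unimodular constant, with a matching exponent. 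So it comes down to checking that the two scalar constants — falling factorials coming from differentiation versus from the $\widetilde U_{2-s}$-action on negative powers — coincide, \emph{including} the unimodular phase. The phases are pinned down by the normalization $(JI^{-1})^{s/(n+2)}(0,i)=1$ and the requirement that everything be continuous in $\phi$ along a path from $I$ to $\iota$ in $\widetilde\Aut$; one verifies the scalars agree by a direct elementary computation (this is where one actually grinds, but it is a finite check, essentially the identity $(-h-s)(-h-s-1)\cdots(-h-1)=(-1)^{1-s}(h+1)(h+2)\cdots(h+1-s)$ read against the analogous product for $(z^h)^{(1-s)}$ and the action on $z^{h+s-1}$).

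The main obstacle I anticipate is not the existence of the intertwining relation — it is a classical shift operator phenomenon, cf.~\cite[Theorem 6.4]{Arazy} — but rather the bookkeeping of the unimodular constants on the covering group: $\widetilde U_s(\iota)$ is only defined up to a phase once one fixes a lift of $\iota$, and one must make sure the \emph{same} lift is used on both sides and that the chosen branch of $(J\phi^{-1})^{s/(n+2)}$ is tracked consistently along a path from the identity, so that the scalar identity holds on the nose and not merely up to a constant. Once that is handled for $\iota$ and the affine generators, the subgroup argument from the first paragraph closes the proof. (Alternatively, one can avoid the covering-group phases entirely by first proving the statement for $s$ with $\Re s$ large where $\widetilde U_s(\iota)$ is literally the Bergman transformation rule of Lemma~\ref{invariance:lem}, establishing the identity of the two holomorphic-in-$s$ expressions there, and then analytically continuing in $s$ down to $s\in-\N$; I would mention this as the fallback if the direct phase computation gets unwieldy.)
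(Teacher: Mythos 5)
Your plan is essentially the paper's proof: the affine case via Lemma~\ref{lem:1}, the inversion checked on the monomials $(\,\cdot\,)^h$ followed by density of polynomials and continuity, and then the generation statement of Proposition~\ref{prop:27} (your explicit subgroup argument is what the paper uses implicitly). The phase bookkeeping you flag as the main obstacle is resolved much more simply than you anticipate: since $s$ and $2-s$ are integers, the only ambiguity in $\widetilde U_{s'}(\iota)$ is a sign $\eps\in\Set{\pm1}$ determined by the chosen lift, and $\eps^{-s}=\eps^{s-2}$, so the two scalars agree on the nose and neither path-tracking nor the analytic-continuation fallback is needed.
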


When $s=0$, this is a particular case of~\cite[Theorem 3.1]{Garrigos}. See~\cite[Theorem 6.4]{Arazy} for the case of the unit disc (and more general symmetric domains).

\begin{proof}
	Notice first that Lemma~\ref{lem:1} implies that 
	\[
	[\widetilde U_s(\phi) f]^{(1-s)}=\widetilde U_{2-s}(\phi) f^{(1-s)}
	\]
	for every $\phi\in G_T$ 
         (identified with a subgroup of
        $\widetilde \Aut$), and for every $f\in \Hol(\C_+)$. The same
        then holds for every $\phi\in \widetilde \Aut$ whose canonical
        image in $\Aut(\C_+)$ belongs to $G_T$. 
	Then, take $\iota\in \widetilde \Aut$ such that $\iota\colon z \mapsto - 1/z$, and observe that there is $\eps\in \Set{\pm1}$ such that
	\[
	\widetilde U_{s'}(\iota)f(z)= (\eps z)^{-s'} f(-1/z) 
	\]
	for every $f\in \Hol(\C_+)$, for every $s'\in -\Z$, and for every $z\in \C_+$. Then,  for every $h\in \N$, 
	\[
	[\widetilde U_s(\iota) (\,\cdot\,)^h]^{(1-s)}(z)=\eps^{-s} (-1)^h (-h-s)(-h-s-1)\cdots (-h) z^{-h-1}
	\]
	and
	\[
	\widetilde U_{2-s}(\iota) [(\,\cdot\,)^h]^{(1-s)}(z)=\eps^{s-2} (-1)^{h-1+s} h (h-1)\cdots(h+s) z^{-h-1 }
	\]
	for every $z\in \C_+$, whence the asserted equality in this
        case. Since the space of polynomials is dense in $\Hol(\C_+)$,
    the assertion follows thanks to Proposition~\ref{prop:27}.
\end{proof}

\begin{teo}\label{teo:6}
	Take $s\in \R$. If $k\in \N$ and $s+2 k\Meg 0$, then
        $\Ac_{s,k}$ is strongly decent, saturated, and         $\Aff(\C_+)$-$U_s$-invariant   with its seminorm.
	If, in addition, $s\Meg 0$ and $k=0$, or $s\in-\N$ and
        $k=1-s$, then   $\Ac_{s,k}$ is   $\widetilde \Aut$-$\widetilde
        U_s$-invariant  with its seminorm.
	
	Conversely, let $H$ be decent semi-Hilbert space of holomorphic functions such that $U_s$ induces a bounded (resp.\ isometric) representation of $\Aff(\C_+)$ in $H$. Assume that one of the following conditions hold:
	\begin{enumerate}
		\item[\textnormal{(a)}] $H$ is strongly decent;
		
		\item[\textnormal{(b)}] $\norm{(\,\cdot\,)^{-s/2}}_H=0$ if $s\in -2 \N$ and $(\,\cdot\,)^{-s/2}\in H$.
	\end{enumerate}
	Then,  there is $k\in \N$ such that $s+2k\Meg 0$ and $H$ embeds as a dense subspace of $\Ac_{s,k}$ with  the induced topology (resp.\ and proportional seminorms). If, in addition, $\widetilde U_s(\iota)$ induces a continuous automorphism of $H$ for some $\iota\in \widetilde \Aut$ such that $\iota\colon z \mapsto -1/z$,  then   either $k=0$ or $s\in -\N$ and $k=1-s$.
\end{teo}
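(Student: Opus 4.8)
\textbf{Proof plan for Theorem~\ref{teo:6}.}

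The forward direction is essentially bookkeeping: we already know from Corollary~\ref{cor:1} that $\Ac_{s,k}$ is a semi-Hilbert space which is $\Aff(\C_+)$-$U_s$-invariant with its seminorm when $s+2k\Meg 0$, and by the definition of $\Ac_{s,k}$ and the isomorphism $\partial_2^k\colon\Hol(\C_+)/\Pc^k\to\Hol(\C_+)$ (Treves) together with the fact that $\Ac_{s+2k}$ embeds continuously in $\Hol(\C_+)$, the closure of $\Set{0}$ in $\Ac_{s,k}$ is exactly $\Pc^k=\ker\partial_2^k$, which is closed in $\Hol(\C_+)$; hence by part (iii) of Proposition~\ref{prop:31} (with $V=\Pc^k$) the space $\Ac_{s,k}$ is strongly decent and saturated. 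The $\widetilde U_s$-invariance in the two special cases follows from Theorem~\ref{teo:1} (the case $k=0$, $s\Meg 0$) and from the intertwining identity of Proposition~\ref{prop:29} combined with Proposition~\ref{prop:28}: when $s\in-\N$ and $k=1-s$, $\partial_2^k$ intertwines $\widetilde U_s$ with $\widetilde U_{2-s}=\widetilde U_{s+2k}$, and $\widetilde U_{s+2k}$ with $s+2k=2-s>0$ leaves $\Ac_{s+2k}$ invariant by Theorem~\ref{teo:1}.

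For the converse, the plan is as follows. First, using the description of closed $\Aff(\C_+)$-invariant proper subspaces of $\Hol(\C_+)$ from Proposition~\ref{prop:28} (they are exactly the $\Pc^k$), I want to pin down the closure $V$ of $\Set{0}$ in $H$. Since $U_s$ induces a bounded representation of $\Aff(\C_+)$ in $H$, the subspace $V$ is $\Aff(\C_+)$-invariant and closed in $H$; the continuous embedding data provided by strong decency (or by the fallback condition (b)) should force $V$ to be \emph{closed in $\Hol(\C_+)$} as well. Under hypothesis (a): by Proposition~\ref{prop:31}(ii) there is a closed $\Aff(\C_+)$-invariant $W\subseteq\Hol(\C_+)$ with $H\cap W=V$ and $H\to\Hol(\C_+)/W$ continuous; if $H\subseteq W$ then $H$ is not decent, contradiction, so $W$ is proper, hence $W=\Pc^k$ for some $k\in\N$, and then $V=H\cap\Pc^k$; a short argument (the canonical map $H\to\Hol(\C_+)/\Pc^k$ is continuous and injective on $H/V$, and its composition with $\partial_2^k$ lands in $\Hol(\C_+)$) shows $V$ is exactly $H\cap\ker\partial_2^k$ and is closed in $H$, so in fact one may replace $W$ by $\Pc^k$ and get $V=H\cap\Pc^k$ with $H\to\Hol(\C_+)/\Pc^k$ continuous. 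Under hypothesis (b) one argues more directly: $H$ is decent, so it has a nonzero continuous functional extending to $\Hol(\C_+)$, whose kernel's preimage is a proper closed $\Aff$-invariant subspace containing $V$, again of the form $\Pc^k$; condition (b) is then used to rule out the borderline case where a monomial $(\,\cdot\,)^{-s/2}$ would lie in $H$ with nonzero seminorm but fail to be detected, ensuring the minimal such $k$ gives $V=H\cap\Pc^k=H\cap\ker\partial_2^k$.

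Once $V=H\cap\ker\partial_2^k$ and $H\to\Hol(\C_+)/\ker\partial_2^k$ is continuous and non-trivial, Corollary~\ref{cor:1} applies verbatim: we get $s+2k\Meg 0$, $H\subseteq\Ac_{s,k}$ continuously, and $H/V\to\widehat\Ac_{s,k}$ is an isomorphism (resp.\ a multiple of an isometry); since $\Ac_{s,k}/\ker\partial_2^k=\widehat\Ac_{s,k}$ is Hausdorff and $H/V$ is dense in it (the image of $H/V$ is a closed subspace of $\widehat\Ac_{s,k}$ isomorphic to it, hence all of it), $H$ embeds as a dense subspace of $\Ac_{s,k}$. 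Finally, if $\widetilde U_s(\iota)$ induces a continuous automorphism of $H$, then $V$ is also $\widetilde U_s(\iota)$-invariant, so $\Pc^k=V+\overline{\Set{0}}$-closure is $\widetilde\Aut$-$\widetilde U_s$-invariant (it is already $\Aff(\C_+)$-invariant, and $\Aff(\C_+)$ together with $\iota$ generates $\Aut(\C_+)$ by Proposition~\ref{prop:27}); the second part of Proposition~\ref{prop:28} then forces $k=0$ or ($s\in-\N$ and $k=1-s$).

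\textbf{Main obstacle.} The delicate point is the passage, under hypothesis (a), from ``$V$ is $\Aff(\C_+)$-invariant and closed in $H$'' to ``$V=H\cap\ker\partial_2^k$ with $H\to\Hol(\C_+)/\ker\partial_2^k$ continuous,'' i.e.\ showing that the closed subspace $W$ furnished abstractly by strong decency can be taken to be exactly $\Pc^k$ — one must check that $\overline{\Set{0}}^{\,H}$ does not ``see'' more of $\Hol(\C_+)$ than a single $\Pc^k$, which is where the classification in Proposition~\ref{prop:28} (really Proposition~\ref{prop:6}, that closed invariant subspaces are closures of the polynomials they contain) does the work. Under (b), the analogous subtlety is the exceptional monomial case, handled precisely by the hypothesis; reconciling the two routes into one clean statement is the part requiring care.
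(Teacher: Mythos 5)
Your overall architecture is the same as the paper's (Proposition~\ref{prop:31} to produce a proper closed $\Aff(\C_+)$-invariant subspace of $\Hol(\C_+)$, Proposition~\ref{prop:28} to identify it as some $\Pc^k$, Corollary~\ref{cor:1} to embed $H$ densely into $\Ac_{s,k}$, with case (a) disposed of by Proposition~\ref{prop:31}(ii)), but two steps are genuinely incomplete. First, under hypothesis (b) you presuppose, rather than prove, that the only way $H\cap\Pc^k$ can exceed the closure of $\Set{0}$ in $H$ is through the single monomial $(\,\cdot\,)^{-s/2}$; this reduction is where the real work of the converse lies (not case (a), which you single out as the main obstacle). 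The missing argument is: $H\cap\Pc^k$ is a finite-dimensional $\Aff(\C_+)$-invariant subspace, hence equals $\Pc^\ell$ with $h\meg\ell\meg k$, where $\Pc^h$ is the closure of $\Set{0}$ in $H$; since $U_s$ is bounded on $H$, it induces a bounded representation on $\Pc^\ell/\Pc^h$, and each monomial $(\,\cdot\,)^j$ is an eigenvector of the dilation $z\mapsto Rz$ with eigenvalue $R^{-j-s/2}$, unbounded over $R\in\R_+^*$ unless $j=-s/2$; hence $\ell=h$, or $\ell=h+1$ and $h=-s/2$, and the second alternative is exactly what (b) excludes. Without this eigenvalue computation, "the borderline monomial case" is an unsupported claim. (Also, under (b) the kernel of a single extendable functional is not $\Aff(\C_+)$-invariant; you still need Proposition~\ref{prop:31}(i) exactly as in case (a).)

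Second, your last step proves invariance of the wrong subspace. From the fact that $\widetilde U_s(\iota)$ induces a continuous automorphism of $H$ you correctly deduce that the closure of $\Set{0}$ in $H$, i.e.\ $\Pc^h=H\cap\Pc^k$, is $\widetilde\Aut$-$\widetilde U_s$-invariant; but the conclusion of the theorem concerns $k$, and $h<k$ is possible since $\Pc^k$ need not be contained in $H$ -- in particular $\Pc^k$ is not "$V$ plus a closure", and your identity $\Pc^k=V+\overline{\Set{0}}$ does not hold. To control $k$ one must show that $\Pc^k$ itself is $\widetilde U_s(\iota)$-invariant: this works if one takes $V=\Pc^k$ to be the canonical subspace occurring in the proof of Proposition~\ref{prop:31}, namely the intersection in $\Hol(\C_+)$ of the kernels of those continuous functionals on $\Hol(\C_+)$ that restrict continuously to $H$; that set of functionals is stable under precomposition with any operator acting continuously on both $H$ and $\Hol(\C_+)$, so this $V$ is $\widetilde U_s(\iota)$-invariant as well as $\Aff(\C_+)$-invariant, and Propositions~\ref{prop:27} and~\ref{prop:28} then force $k=0$ or $s\in-\N$ and $k=1-s$. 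Finally, a small point: your parenthetical justification of density (a closed subspace isomorphic to the whole space must be the whole space) is not a valid argument, but it is also unnecessary, since Corollary~\ref{cor:1} already asserts that the canonical map $H/(H\cap\ker\partial_2^k)\to\widehat\Ac_{s,k}$ is onto.
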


Notice that saying that $H$ embeds as a dense subspace in $\Ac_{s,k}$ (with the induced topology) means that $H$ is a subspace of $\Ac_{s,k}$  such that  $H+\ker (\,\cdot\,)^{(k)} =H+\Pc^k =\Ac_{s,k}$, thanks to the completeness of $H$.

We observe explicitly that we could remove assumptions (a) and (b) and
show that $H=\Ac_{s,k}$, if we knew that the only dense
$\Aff(\C_+)$-invariant vector subspaces of $\Ac_{s,k}$, for $s+2k>0$, are precisely the $\Ac_{s,k'}$ for $s+2k'>0$.
Nontheless, we do not even known if the space $\Pc^k$ admits 
an $\Aff(\C_+)$-invariant   algebraic complement $H'$ in
$\Ac_{s,k}$ when $s+2 k>0$. Notice, though, that if such a space $H'$
exists, then $H'$ is a Hilbert space, but does \emph{not} embed
continuously into $\Hol(\C_+)$. Therefore, such spaces, if existing,
must be somewhat pathological. 

We point out that 
the statements of~\cite[Theorems 1 and 2]{ArazyFisher} (which deal with  the case $s=0$) 
seem to be erroneous: on the one hand,  the null space and the space
of constant functions satisfy all the assumptions of the cited
results, but are not the Dirichlet space; on the other hand, the fact
that a M\"obius-invariant space $H$ is contained in the Dirichlet
space and has the induced topology (or even the induced seminorm) does
\emph{not} immediately imply that it is the whole Dirichlet space, as
there  might  exist M\"obius-invariant \emph{algebraic} complements of
the space of constant functions in the Dirichlet space (and this
possibility is not ruled out in the proof).\footnote{We point out,
  though, as we remarked earlier, that we do \emph{not} know whether
  such pathological spaces do exist or not.} 

We also point out, in this connection, that~\cite[Remark]{ArazyFisher} seems to be erroneuos: on the one hand, one must assume that $H$ contains non-constant functions in order to be sure that the given arguments produce non-zero polynomials in $H$; on the other hand, unless one assumes that $H$ contains the constant functions, it is only possible to deduce that the non-constant monomials belong to $H$, and this does not seem to be sufficient to conclude that $H$ cannot be an algebraic complement of the space of constant functions in the Dirichlet space. The subsequent proof then builds on the erronous claim that, if $f(z)=\sum_k a_k z^k$ (Taylor development about $0$) and $f\in H$, then $f=\sum_k a_k (\,\cdot\,)^k$ (orthogonal sum in $H$): despite the fact that the $(\,\cdot\,)^k$, $k\in N$, are pairwise orthogonal in $H$ (where $N$ is the set of $k\in \N$ such that $(\,\cdot\,)^k\in H$, so that $N$ is either $\N$ or $1+\N$), one may not claim that the Taylor development of the orthogonal sum $\sum_{k\in N} a_k(\,\cdot\,)^k$ must be $\sum_{k\in N} a_k z^k$ -- unless the sequence $(a_k)$ has finite support -- since $H$ does \emph{not} necessarily embed continuously in $\Hol(\C_+)$ (but only in $\Hol(\C_+)$ modulo constant functions).

\begin{proof}
The first assertion follows from Corollary~\ref{cor:1} and Proposition~\ref{prop:29}.

Then, take $H$ as in the statement.
By Proposition~\ref{prop:31}, there is an $\Aff(\C_+)$-invariant closed vector subspace $V$ of $\Hol(\Uc_{n+1})$ so that the canonical mapping $H\to \Hol(\Uc_{n+1})/V$ is continuous and non-trivial. Observe that Proposition~\ref{prop:28} implies that there are $h,k\in\N$, $h\meg k$, such that $V=\Pc^k$ and $\Pc^h$ is the ($\Aff(\C_+)$-invariant) closure of $\Set{0}$ in $H$. 
Therefore, Corollary~\ref{cor:1} implies that $H\subseteq \Ac_{s,k}$ continuously, and that the continuous linear mapping $H/(\Pc^k\cap H)\to  \Ac_{s,k}/\Pc^k$ is an isomorphism.  In order to complete the proof of the first assertion, it then suffices to show that $\Pc^k\cap H=\Pc^h$. If condition (a) holds, that is, if $H$ is strongly decent, then Proposition~\ref{prop:31} ensures that we may take $k$ so that this is the case.

Then, assume that condition (b) holds, that is, that $\norm{(\,\cdot\,)^{-s/2}}_H=0$ (i.e., $-s/2<h$) when $s\in -2 \N$ and $(\,\cdot\,)^{-s/2}\in H$. Observe first that $\Pc^k\cap H$ is finite-dimensional and $\Aff(\C_+)$-invariant, so that it equals $\Pc^\ell$ for some $\ell \in \Set{h,\dots,k}$, thanks to Proposition~\ref{prop:28} again. 
In particular, $U_s$ induces a bounded representation in $\Pc^\ell/\Pc^h$, so that $U_s(R\,\cdot\,)((\,\cdot\,)^j)=R^{-j-s/2}(\,\cdot\,)^{j}$  must stay bounded as $R$ runs through $\R_+^*$, for every $j=h,\dots, \ell-1$. Therefore, either $\ell=h$ or $\ell-1=h=-s/2$. Now, if $\ell-1=h=-s/2$, then $(\,\cdot\,)^{-s/2}\in H$, so that $-s/2< h$ by (b): contradiction.  Thus, $\ell=h$, as we wished to prove.

Finally, assume that $\widetilde U_s(\iota)$ induces a continuous endomorphism of $H$ for some $\iota\in \widetilde \Aut$ such that $\iota\colon z \mapsto -1/z$. Observe that, in this case, $\Pc^k$ and $\Pc^h$ must be  $\widetilde \Aut$-$\widetilde U_s$-invariant (cf.~Proposition~\ref{prop:27}), so that Proposition~\ref{prop:28} implies that either $k=h=0$, or $s\in -\N$ and $k=1-s $ (and $h\in \Set{0,1-s}$). The proof is therefore complete.
\end{proof}

\begin{oss}\label{oss:4}
	If $s,s'\in \R$ and $k,k'\in \N$ are such that $s+2k, s'+2k'>0$, then $U_s$ and $U_{s'}$, as unitary representations of $\Aff(\C_+)$ into $\widehat\Ac_{s,k}$ and $\widehat\Ac_{s',k'}$, respectively, are unitarily equivalent. 
	
	If $(s,k),(s',k')\in( \R_+^*\times \Set{0})\cup \Set{(-h,1-h)\colon h\in \N}$, then the unitary representations $\widetilde U_s$ and $\widetilde U_{s'}$ of  $\widetilde \Aut(\C_+)$ into   $\widehat\Ac_{s,k}$ and $\widehat\Ac_{s',k'}$, respectively, are unitarily equivalent if and only if $s=s'$, $s'=s+2k$, or $s=s'+2k'$.
\end{oss}

The first assertion obviously extends to the case $n>0$. The second assertion does not, as we shall see in Remark~\ref{oss:3}.

\begin{proof}
	The first assertion follows from Remark~\ref{oss:2}. If $s,s'>0$ or $s,s'\in -\N$, then also the second assertion follows from Remark~\ref{oss:2}. The remaining cases follow from Proposition~\ref{prop:29} and Remark~\ref{oss:2}.
\end{proof}

\section{Invariant Spaces for $n>0$}\label{sec:5}

In this section, we assume that $n>0$, and we characterize $U_s$- and
$\widetilde U_s$-invariant strongly decent semi-Hilbert
spaces of holomorphic functions on $\Uc_{n+1}$.  For simplicity of notation, we write $\Uc$ in place of $\Uc_{n+1}$.

As in Section~\ref{sec:4bis}, we shall simply say that a vector subspace of $\Hol(\Uc)$ is $\Aff(\Uc)$-invariant if it is $\Aff(\Uc)$-$U_s$-invariant for some $s\in \R$.

We begin by describing the closed $\Aff(\Uc)$-  and $\widetilde U_s$-invariant subspaces of $\Hol(\Uc)$.
  Recall that $\Pc^k$ denotes the space of holomorphic polynomials of degree $<k$. We denote by $\Pc_k$ the space of \emph{homogeneous} holomorphic polynomials  of degree $k$. 

\begin{prop}\label{prop:30}
	Let $V$ be a \emph{proper} closed subspace of
        $\Hol(\Uc)$, and take $s\in \R$. Then, $V$ is 
        $\Aff(\Uc)$-invariant  if and only if $V$ is the closure of $\bigoplus_{k\in \N} [\Pc_k(\C^n)\otimes \Pc^{h_k}(\C)]$ in $\Hol(\Uc)$, where $h_0\in \N$ and $h_{k+1}\in \Set{h_k, (h_k-1)_+}$ for every $k\in \N$.
	
	In addition, $V$ is $\widetilde \Aut$-$\widetilde U_s$-invariant        if and only if $V=\Set{0}$ or $s\in -\N$ and  $V=\Pc^{1-s}(\C^{n+1})$.
\end{prop}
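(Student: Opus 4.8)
The statement splits into two halves, and I would treat them in turn. For the first (the $\Aff(\Uc)$-invariant case), I would start from Proposition~\ref{prop:6}: since $V$ is a \emph{proper} closed $U_0$-invariant subspace, $V=\overline{W}$ with $W:=V\cap\Pc(\C^{n+1})$ a \emph{proper} $\Aff(\Uc)$-invariant subspace of $\Pc(\C^{n+1})$ (properness of $W$ because $\Pc(\C^{n+1})$ is dense in $\Hol(\Uc)$, by Proposition~\ref{prop:6} applied to $V=\Hol(\Uc)$), and conversely the closure of any $\Aff(\Uc)$-invariant polynomial subspace is a closed $\Aff(\Uc)$-invariant subspace. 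So the first assertion reduces to showing that the proper $\Aff(\Uc)$-invariant subspaces of $\Pc(\C^{n+1})=\Pc(\C^n)\otimes\Pc(\C)$ are exactly the spaces $W_{(h_k)}:=\bigoplus_{k\in\N}\Pc_k(\C^n)\otimes\Pc^{h_k}(\C)$ with $h_0\in\N$ and $h_{k+1}\in\Set{h_k,(h_k-1)_+}$, together with the remark that $V=\overline{W_{(h_k)}}$ is then automatically proper (e.g. $z^{h_0}\notin V$, since $f\mapsto f(0,\,\cdot\,)$ is a continuous linear map $\Hol(\Uc)\to\Hol(\C_+)$ carrying $V$ into the finite-dimensional space $\Pc^{h_0}(\C)$).

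To classify the proper $\Aff(\Uc)$-invariant $W\subseteq\Pc(\C^{n+1})$ I would peel off the invariances coming from $\Aff(\Uc)=\Hb_n\rtimes GL(\Uc)$. First, $U(n)$-invariance: the $U(n)$-modules $\Pc_k(\C^n)$ are irreducible and pairwise inequivalent, so $W=\bigoplus_k\Pc_k(\C^n)\otimes W_k$ with $W_k\subseteq\Pc(\C)$. Second, invariance under the dilations $\delta_R\colon(\zeta,z)\mapsto(R\zeta,R^2z)$, which act on $z^j$ by the distinct scalars $R^{-2j}$, forces each $W_k$ to be spanned by monomials. Third, invariance under $\partial_2$ (the generator of the centre of $\Hb_n$ acting on $\Hol(\Uc)$) forces that monomial set to be an initial segment, i.e. $W_k=\Pc^{h_k}(\C)$ with $h_k\in\N\cup\Set{\infty}$. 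Fourth, a short computation with the remaining one-parameter subgroups of $\Hb_n$ shows that the corresponding generators act on $\Hol(\Uc)$ as $\partial_{\zeta_i}$ and $\zeta_i\,\partial_2$; since $\partial_{\zeta_i}$ maps $\Pc_k(\C^n)\otimes\Pc^{h_k}(\C)$ into $\Pc_{k-1}(\C^n)\otimes\Pc^{h_k}(\C)$ and $\zeta_i\,\partial_2$ maps it into $\Pc_{k+1}(\C^n)\otimes\Pc^{(h_k-1)_+}(\C)$, invariance amounts to $(h_k-1)_+\meg h_{k+1}\meg h_k$ for all $k$, i.e. $h_{k+1}\in\Set{h_k,(h_k-1)_+}$; the same computation shows that if some $h_k=\infty$ then all of them are $\infty$ and $W=\Pc(\C^{n+1})$, contradicting properness, so $h_0\in\N$. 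For the converse, $W_{(h_k)}$ is visibly $U(n)$- and $\delta_R$-invariant and, by the fourth step, invariant under all of the Lie algebra of $\Hb_n$; intersecting with the $\Aff(\Uc)$-invariant finite-dimensional subspaces $\Pc^{d}(\C^{n+1})$ and using that $\Aff(\Uc)$ is connected (Lemma~\ref{lem:2.1}), each $W_{(h_k)}\cap\Pc^d(\C^{n+1})$, hence $W_{(h_k)}$, is $\Aff(\Uc)$-invariant.

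For the second assertion I would use Proposition~\ref{prop:27}: $\widetilde\Aut$ is generated by $\Aff(\Uc)$ and the inversion $\iota$, so $V$ is $\widetilde\Aut$-$\widetilde U_s$-invariant iff $V=\overline{W_{(h_k)}}$ as above \emph{and} $\widetilde U_s(\iota)V=V$. Using~\eqref{eq:4} one gets, up to a non-zero constant, $\widetilde U_s(\iota)(\zeta^\alpha z^j)=c_{\alpha,j}\,\zeta^\alpha z^{-\abs{\alpha}-j-s}$. If $V\neq\Set{0}$ then $h_0\Meg1$, so $1\in V$ and $\widetilde U_s(\iota)(1)=c\,z^{-s}\in V$; applying $f\mapsto f(0,\,\cdot\,)$, which sends $V$ into the finite-dimensional (hence closed) subspace $\Pc^{h_0}(\C)$ of $\Hol(\C_+)$, forces $z^{-s}\in\Pc^{h_0}(\C)$, hence $s\in-\N$ and $-s<h_0$; write $s=-m$. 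Then for each $k$ with $h_k\Meg1$ the monomials $\zeta^\alpha$ with $\abs{\alpha}=k$ lie in $V$, so $\widetilde U_{-m}(\iota)(\zeta^\alpha)=c\,\zeta^\alpha z^{m-k}\in V$, and the continuous coefficient map $f\mapsto\frac{1}{\alpha!}\partial_\zeta^\alpha f(0,\,\cdot\,)$, which sends $V$ into $\Pc^{h_k}(\C)$, forces $k\meg m$ and $h_k\Meg m+1-k$. In particular $h_{m+1}=0$, and since $h_0\Meg m+1$ and the sequence drops by at most $1$ at each step, this forces $h_k=(m+1-k)_+$ for every $k$; hence $W_{(h_k)}=\Pc^{m+1}(\C^{n+1})=\Pc^{1-s}(\C^{n+1})$, which is already closed. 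Conversely $\Pc^{1-s}(\C^{n+1})$ is $\Aff(\Uc)$-invariant (affine maps preserve total degree) and, by the displayed formula for $\widetilde U_s(\iota)$, is $\widetilde U_s(\iota)$-invariant, hence $\widetilde\Aut$-$\widetilde U_s$-invariant by Proposition~\ref{prop:27}; and $\Set{0}$ is trivially so.

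The step I expect to be the real work is the fourth one in the polynomial classification: identifying the two ``off-diagonal'' generators of the Heisenberg action on $\Hol(\Uc)$ as $\partial_{\zeta_i}$ and $\zeta_i\,\partial_2$ and reading off \emph{both} inequalities $(h_k-1)_+\meg h_{k+1}\meg h_k$. The other delicate point, in the $\widetilde\Aut$ case, is making sure that a non-polynomial power $z^{-s}$ cannot slip into the closure $\overline{W_{(h_k)}}$; this is exactly what the continuous projections of $\Hol(\Uc)$ onto the \emph{finite-dimensional} coordinate spaces $\Pc^{h_k}(\C)\subseteq\Hol(\C_+)$ are designed to rule out.
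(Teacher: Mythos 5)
Your argument is correct, and it reaches the classification by a partly different mechanism than the paper's. The overall skeleton coincides: reduction to $V\cap\Pc$ via Proposition~\ref{prop:6}, the $U(n)$-isotypic splitting $\bigoplus_k \Pc_k(\C^n)\otimes W_k$, and the reduction of the $\widetilde\Aut$ case to the single inversion $\iota$ via Proposition~\ref{prop:27} and the explicit formula stemming from~\eqref{eq:4}. Where you genuinely diverge is in how the $\Hb_n$- and dilation-invariance is exploited: the paper stays at the group level, computing $U_s(\zeta,x)(p\otimes(\,\cdot\,)^{h'})$ explicitly and extracting components with the projectors $\pi_\ell$ (its Step II), and it identifies the fibres $W_k$ by quoting the $n=0$ classification (Proposition~\ref{prop:28}); you instead argue infinitesimally, using dilations to reduce to monomials and the operators $\partial_2$, $\partial_{\zeta_i}$, $\zeta_i\partial_2$ to read off the initial-segment structure and both inequalities $(h_k-1)_+\meg h_{k+1}\meg h_k$ at once, then integrate back through the finite-dimensional $\Aff(\Uc)$-invariant subspaces $\Pc^d(\C^{n+1})$ and the connectedness of $\Aff(\Uc)$. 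Likewise, for the $\widetilde\Aut$ assertion the paper reduces fibrewise via its intertwining identity~\eqref{eq:7} and again invokes Proposition~\ref{prop:28}, while you pin down $(h_k)$ by applying $\widetilde U_s(\iota)$ to the monomials $1$ and $\zeta^\alpha$ and testing against the continuous coefficient maps $f\mapsto\frac{1}{\alpha!}\partial_\zeta^\alpha f(0,\,\cdot\,)$, whose images of $V$ lie in the finite-dimensional (hence closed) spaces $\Pc^{h_k}(\C)$; this is exactly the right device to keep powers such as $z^{-s}$ out of the closure, and it makes your proof independent of Proposition~\ref{prop:28}. Two small points of hygiene, neither a gap: the one-parameter subgroups of $\Hb_n$ actually generate $-\partial_{\zeta_i}-2i\zeta_i\partial_2$ and $-i\partial_{\zeta_i}-2\zeta_i\partial_2$ rather than $\partial_{\zeta_i}$ and $\zeta_i\partial_2$ themselves, so you should note that you pass to complex linear combinations (harmless, since the subspaces are complex); and the step from group invariance of $W$ to invariance under these generators should be justified, as you already do for the converse, inside the finite-dimensional invariant pieces $\Pc^d(\C^{n+1})$, where the representation is smooth and differentiation stays in the subspace.
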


\begin{proof}
	\textsc{Step I.} By Proposition~\ref{prop:6}, we may reduce to describing $V\cap \Pc$, where $\Pc$ is the space of holomorphic polynomials on $\Uc$. 
	
	Observe that, if $\mi$ is a (Radon) measure with compact support on 
        $\Aff(\Uc)$, then we may define
\begin{equation}\label{def:Usmi}
  U_s(\mi)\coloneqq \int_{\Aff(\Uc)} U_s(\phi)\,\dd \mi(\phi)
\end{equation}
as a weak integral in $\Lin(\Hol(\Uc))$, endowed with the  topology of pointwise convergence. Note that this operator is independent of $s$ if $\mi$ is supported in $\Nc U(n)$. 
	Now, fix $k\in\N$ and define a measure $\mi_k$ on
        $\Aff(\Uc)$  as follows. Consider the canonical homomorphism $\pi\colon\T\ni \alpha \mapsto [(\zeta,z)\mapsto (\alpha \zeta,z)]\in \Aff(\Uc)$, and define
	\[
	\langle \mi_k, f\rangle\coloneqq  \int_{\T} \alpha^{k} f(\pi(\alpha))\,\dd \alpha
	\]
	for every $f\in C_c(G_\Aff)$. 
	Then, consider the operator
\begin{equation}\label{def:pik}
	\pi_k\coloneqq U_{\lambda }(\mi_k),
\end{equation}
	and observe that
	\[
	\pi_k(f)(\zeta,z)=\frac{1}{k!} \partial_1^k f(0,z)\cdot \zeta^k
	\]
	for every $f\in \Hol(\Uc)$ and for every $(\zeta,z)\in \Uc$.  It is then clear that $\pi_k$ maps $\Hol(\Uc)$ continuously onto the space $\Pc_k(\C^n)\otimes \Hol(\C_+)$, canonically identified with a subspace of $\Hol(\Uc)$.
	Furthermore,~\cite[Theorem 2.1]{FarautKoranyi2} implies that $U_{s }$ induces an  irreducible representation $U'_k$ of the unitary group $U(n)$ in $\Pc_k(\C^n)$, for every $k\in\N$, and that these representations are pairwise  inequivalent. 
	Let us prove that, for every $k\in \N$, there is a vector subspace $V_k$ of $\Hol(\C_+)$  such that $\pi_k(V)=\Pc_k(\C^n)\otimes V_k$.\footnote{  Here we are interested in showing that $\pi_k(V)$ is the tensor product of $\Pc_k(\C^n)$ with some closed vector subspace of $\Hol(\C_+)$, with reference to the canonical embedding of $\Pc_k(\C^n)\otimes \Hol(\C_+)$ into $\Hol(\Uc)$, and not simply the tensor product of $\Pc_k(\C^n)$ with some abstract vector space (as the general theory of representation shows).}
	Observe first that $U'_k(\cM(U(n)))$ is a unital sub-$C^*$-algebra of $\Lin(\Pc_k(\C^n))$, and that, by Schur's lemma, its commutant is $\C I$ (cf.~\cite[Theorem 2 of \S\ 21, No.\ 3]{Naimark}). Therefore, by the bicommutant theorem (cf.~\cite[Theorem of \S\ 34, No.\ 2]{Naimark}), $U'_k(\cM(U(n)))=\Lin(\Pc_k(\C^n))$. 
	Then, take a  basis $p_1,\dots, p_h$ of $\Pc_k(\C^n)$ and $f\in \pi_k(V)$. Observe that there are uniquely determined elements $f_1,\dots, f_h$ of $\Hol(\C_+)$ such that $f=\sum_j p_j\otimes f_j$. Take $j_0\in \Set{1,\dots,h}$ and $\mi_{j_0}\in \cM(U(n))$ such that $U'_k(\mi_{j_0})p_j=\delta_{j,j_0} p_j$  for every $j=1,\dots, h$. Then,
	\[
	V\ni U_s(\mi_{j_0}) f=\sum_j U'_k(\mi_{j_0})(p_j\otimes f_j)= p_{j_0}\otimes f_{j_0}.
	\]
	By the arbitrariness of $f$ and $j_0$, this implies that $\pi_k(V)=\Pc_k(\C^n)\otimes V_k$ for some vector subspace $V_k$ of $\Hol(\C_+)$. Observe that, since the $\pi_k$ are projectors of $\Hol(\Uc)$, $\pi_k \pi_h=\pi_h\pi_k=0$ for $h\neq k$, and $\sum_k \pi_k=I$ in the strong topology of $\Lin(\Hol(\Uc))$, it is clear that $\pi_k(V)=V\cap \big(\bigcap_{k'\neq k}\ker \pi_{k'}\big)$ is closed in $\Hol(\Uc)$, so that $V_k$ is closed in $\Hol(\C_+)$. In addition, it is clear that
	\begin{equation}\label{eq:6}
	U_s(\phi) (p\otimes f)= p\otimes U_{s+ k}(\phi') f
	\end{equation}
	for every $p\in \Pc_k(\C^n)$, for every $f\in \Hol(\C_+)$,   for every $\phi \in \Aff(\Uc)$, and for every $\phi'\in \Aff(\C_+)$ such that  $\phi\colon (\zeta,z)\mapsto (\zeta,z+x)$ and $\phi'\colon z \mapsto z+x$ for some $x\in \R$, or $\phi\colon (\zeta,z)\mapsto (R^{1/2}\zeta,R z)$ and $\phi'\colon z\mapsto R z$ for some $R>0$. 
	It then follows that $V_k$ is $\Aff(\C_+)$-invariant, so that it equals  $\Pc^{k_h}(\C)$ for some $k_h\in \N\cup \Set{\infty}$, interpreting $\Pc^\infty(\C)$ as the space of holomorphic polynomials on $\C$, thanks to Proposition~\ref{prop:28}. Thus, $V\cap \Pc=\bigoplus_k [\Pc_k(\C^n)\otimes \Pc^{h_k}(\C)]$. 
	
	\textsc{Step II.} Take $k,h\in\N$, and let us prove that the $ \Aff(\Uc)  $-invariant vector subspace $V_{k,h}$ of $\Pc$ generated by $\Pc_k(\C^n)\otimes \Pc^h(\C)$ is  
	\[
	\Pc_{k,h}\coloneqq \sum_{k'\meg k, h_1+h_2=h} [\Pc_{k'+h_1}(\C^n)\otimes \Pc^{h_2}(\C)]=\bigoplus_{\ell=0}^{h+k-1} [\Pc_\ell(\C^n)\otimes \Pc_{h-(\ell-k)_+}(\C)].
	\]
	Observe that, since $\Aff(\Uc) $ is the semi-direct product of $GL(\Uc)$ and $\Hb_n$ (cf.~Subsection~\ref{sec:2:1}), and since  $\Pc_k(\C^n)\otimes \Pc^h(\C)$ is clearly $GL(\Uc)$-invariant, it will suffice to consider the action of $\Hb_n$.
	Then, take $p\in \Pc_k(\C^n)$ and $h'<h$. Observe that, for every $(\zeta,x)\in \Hb_n$,
	\[
	\begin{split}
	U_s(\zeta,x) (p\otimes (\,\cdot\,)^{h'})(\zeta',z')&= p(\zeta'-\zeta)(z'-x+i\abs{\zeta}^2-2 i \langle \zeta'\vert \zeta\rangle)^{h'}\\
		&=\sum_{k'=0}^k \sum_{h_1+h'_2=h'} \frac{h'!}{h_1!h'_2! k'!} p^{(k')}(-\zeta) \cdot \zeta'^{k'} (-2i\langle \zeta'\vert \zeta\rangle)^{h_1}(z'-x+i\abs{\zeta}^2)^{h'_2}
	\end{split}
	\]
	for every $(\zeta',z')\in \Uc$, so that $U_s(\zeta,x) (p\otimes (\,\cdot\,)^{h'})\in \Pc_{k,h}$.
	It is then readily seen that  $V_{k,h}\subseteq \Pc_{k,h}$. Conversely, choose $p=\langle \,\cdot\,\vert \zeta \rangle^k$ in the above computations, and observe that
	\[
	U_s(\zeta,x) (p\otimes (\,\cdot\,)^{h'})(\zeta',z')=\sum_{k'=0}^k \sum_{h_1+h'_2=h'} (-2i)^{h_1}\frac{h'!k!}{h_1!h'_2! k'!(k-k')!} (-\abs{\zeta}^2)^{k-k'} \langle \zeta'\vert \zeta\rangle^{k'+h_1}  (z'-x+i\abs{\zeta}^2)^{h'_2}
	\]
	for every $(\zeta',z')\in \Uc$. Therefore, for every $\ell\meg h+k$, $\pi_\ell(U_s(\zeta,x) (p\otimes (\,\cdot\,)^{h'}))(\zeta',z')$ equals
	\[
	\langle \zeta'\vert \zeta\rangle^{\ell}  \sum_{k'=(\ell-h')_+  }^{\min(k,\ell)}  (-2i)^{\ell-k'}\frac{h'!k!}{(\ell-k')!(h'-\ell+k')! k'!(k-k')!} (-\abs{\zeta}^2)^{k-k'}  (z'-x+i\abs{\zeta}^2)^{h'-\ell+k'}.
	\]
	Since the highest power (in $z'$) in the above sum occurs only once (for $k'=\min(k,\ell)$) and with a non-zero coefficient, by the arbitrariness of $x$ it is clear that   $(\zeta',z')\mapsto \langle \zeta'\vert \zeta\rangle^{\ell}  z'^{h_2'}$  belongs to $V_{k,h}$ for every  $h_2'\meg h'-(\ell-k)_+$. Thus, $V_{k,h}=\Pc_{k,h}$. 
	
	\textsc{Step III.} By~\textsc{step I},  we know that $V\cap \Pc=\sum_{k\in \N}[\Pc_k(\C^n)\otimes \Pc^{h_k}(\C)]$. By~\textsc{step II}, we know that $\Pc_{k'}(\C^n)\otimes \Pc^{h_k}(\C)\subseteq V$ for every $k'\meg k$, so that $h_{k'}\Meg h_k$ and $(h_k)$ is decreasing, and that $\Pc_{k+\ell}(\C^n)\otimes \Pc^{h_k-\ell}(\C)\subseteq V$ for every $k$ and for every $\ell<h_k$,  so that $h_{k+\ell}\Meg h_k-\ell $.  Consequently, $h_0\in \N$\footnote{If $h_0=\infty$, then $h_k=\infty$ for every $k\in\N$, and $V=\Hol(\Uc)$, contrary to our assumptions.} and $h_{k+1}\in \Set{h_k, (h_k-1)_+}$ for every $k\in\N$. Conversely, given a sequence $(h_k)$ with the preceding properties, the space $\bigoplus_k [\Pc_k(\C^n)\otimes \Pc^{h_k}(\C)]$ is $ \Aff(\Uc) $-invariant, since it contains $\Pc_{k,h_k}$ for every $k\in\N$, with the notation of~\textsc{step II}. This completes the proof of the first assertion.
	
	\textsc{Step IV.} Now, take $V$ as the closure of  $\bigoplus_k[ \Pc_k(\C^n)\otimes \Pc^{h_k}(\C)]$ in $\Hol(\Uc)$, where $h_0\in \N$ and $h_{k+1}\in \Set{h_k,(h_k-1)_+}$ for every $k\in\N$. Observe that we may take $\iota\in \widetilde \Aut$ so that $\iota\colon (\zeta,z)\mapsto (-i\zeta/z,-1/z)$ and so that
	\[
	\widetilde U_s(\iota) f(\zeta,z)=i^{-n s/(n+2)} f(\iota(\zeta,z)) z^{-s} 
	\]
	for every $f\in \Hol(\Uc)$ and for every $(\zeta,z)\in \Uc$, thanks to Proposition~\ref{prop:27} and~\eqref{eq:4}. In addition, $V$ is $\widetilde \Aut$-$\widetilde U_s$-invariant if and only if $\widetilde U_s(\iota) V\subseteq V$, thanks to Proposition~\ref{prop:27} again. Then take $p\in \Pc_k(\C^n)$ and $f\in \Hol(\C_+)$, and observe that
	\begin{equation}\label{eq:7}
	\widetilde U_s(\iota) (p\otimes f)= i^{-n s/(n+2)} (-i)^k p\otimes \widetilde U_{s+k}(\iota_0)f,
	\end{equation}
	where $\iota_0$ is a suitable element of $\widetilde \Aut(\C_+)$ such that $\iota_0\colon z \mapsto -1/z$.
	Taking~\eqref{eq:6} into account, this proves that $V$ is $\widetilde \Aut$-$\widetilde U_s$-invariant if and only if $\Pc^{h_k}(\C)$ is $\widetilde \Aut(\C_+)$-$\widetilde U_{s+k}$-invariant for every $k\in \N$. 
	If $V$ is  $\widetilde \Aut$-$\widetilde U_s$-invariant, then  Proposition~\ref{prop:28} implies that either $h_k=0$ for every $k\in\N$, or $s\in -\N$ and $h_k=(1-s-k)_+$ for every $k\in\N$. Conversely,~\eqref{eq:7} and~\textsc{step III} show that  $ \Pc^{1-s}(\C^{n-1})= \bigoplus_k [\Pc_k(\C^n)\otimes \Pc^{(1-s-k)_+}(\C)]$ is $\widetilde \Aut$-$\widetilde U_s$-invariant.	
\end{proof}

\begin{teo}\label{teo:3}
	Take $s\in \R$ and let $H$ be a non-trivial strongly decent semi-Hilbert space of holomorphic functions on $\Uc$ such that $U_{s}$ induces a bounded (resp.\ isometric) representation of $\Aff(\Uc)$ in $H$. 
	Then, either one of the following conditions hold:
	\begin{itemize}
		\item there is $k\in\N$ such that $s+2 k>0$ and $H$ is a dense subspace of $\Ac_{s,k}$ with an equivalent (resp.\ proportional) seminorm;
		
		\item $s\in -\N$ and there is a non-empty subset $J$ of $\N\cap (-s-2 \N)$ such that $H$ contains $\bigoplus_{j\in J}[\Pc_j(\C^n)\otimes \Pc^{-(s+j)/2}(\C)]$ as a dense vector subspace, and thereon its seminorm is equivalent (resp.\ equal) to the  seminorm 
        \[
		\norm*{ \sum_{j\in J} (p_j\otimes (\,\cdot\,)^{-(s+j)/2})}^2=\sum_{j\in J} \norm{p_j}^2_{j}
		\]
		for every $\sum_{j\in J}( p_j\otimes (\,\cdot\,)^{-(s+j)/2})\in \bigoplus_{j\in J}[\Pc_j(\C^n)\otimes \Pc^{-(s+j)/2}(\C)]$, where $\norm{\,\cdot\,}_j$ is a suitable rotation-invariant Hilbert norm on the space  $\Pc_j(\C^n)$. 
              \end{itemize}
\end{teo}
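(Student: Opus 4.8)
The plan is to split into two cases according to whether the functions in $H$ have uniformly bounded degree in the variable $z$, and to reduce the bounded-degree case to the one-dimensional Theorem~\ref{teo:6} via the projections $\pi_k$ from the proof of Proposition~\ref{prop:30}. First I would invoke strong decency: by Proposition~\ref{prop:31} there is a closed $\Aff(\Uc)$-invariant subspace $V$ of $\Hol(\Uc)$ such that $H\cap V$ is the closure of $\Set{0}$ in $H$ and $H\to\Hol(\Uc)/V$ is continuous, and by Proposition~\ref{prop:30} the space $V$ is the closure of $\bigoplus_k[\Pc_k(\C^n)\otimes\Pc^{h_k}(\C)]$ with $(h_k)$ nonincreasing (hence bounded by $h_0$) and $h_{k+1}\in\Set{h_k,(h_k-1)_+}$. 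Consequently $V\subseteq\ker\partial_2^{h_0}$, so the canonical map $H\to\Hol(\Uc)/\ker\partial_2^{k}$ is continuous for every $k\Meg h_0$. The two cases are then: ($1$) $H\not\subseteq\ker\partial_2^k$ for every $k\in\N$; ($2$) $H\subseteq\ker\partial_2^m$ for some $m$.

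In Case $1$, let $k_0$ be the least $k$ for which $H\to\Hol(\Uc)/\ker\partial_2^{k}$ is continuous; the set of such $k$ is upward closed and non-empty (it contains $h_0$), so $k_0$ exists, and the map at $k_0$ is also non-trivial since $H\not\subseteq\ker\partial_2^{k_0}$. Corollary~\ref{cor:1}, applied with $k=k_0$ (using that $U_s$ restricts to a bounded, resp.\ isometric, representation of $G_T$), then gives $s+2k_0\Meg0$, a continuous inclusion $H\subseteq\Ac_{s,k_0}$, and an isomorphism (resp.\ a multiple of an isometry) $H/(H\cap\ker\partial_2^{k_0})\to\widehat\Ac_{s,k_0}$. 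Unwinding this yields $\partial_2^{k_0}H=\Ac_{s+2k_0}$, hence $H+\ker\partial_2^{k_0}=\Ac_{s,k_0}$, i.e.\ $H$ is a dense subspace of $\Ac_{s,k_0}$ with an equivalent (resp.\ proportional) seminorm; moreover $s+2k_0>0$, because $s+2k_0=0$ would make $\partial_2^{k_0}H=\Ac_0$ one-dimensional and force $H$ to have bounded $z$-degree, against the case hypothesis. This is the first alternative.

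In Case $2$, I would run the argument of Steps I--III of the proof of Proposition~\ref{prop:30} with $H$ in place of a closed subspace of $\Hol(\Uc)$. Each $\pi_k=U_s(\mi_k)$ is a well-defined bounded idempotent on $H$ (the representation is continuous by Proposition~\ref{prop:32}, so the weak integral makes sense), the $\pi_k$ are mutually orthogonal with $\sum_k\pi_k=\mathrm{id}$, and $\pi_k(H)$ is the isotypic component of $H$ for the character $k$ of the scalar torus $\pi(\T)\subseteq U(n)$. Using that $H\to\Hol(\Uc)/V$ is continuous and $U(n)$-equivariant, the product structure of $V$, and the Schur/bicommutant argument of Proposition~\ref{prop:30} (irreducibility of the $U'_k$ on $\Pc_k(\C^n)$, after averaging the seminorm of $H$ over the compact group $U(n)$), one gets $\pi_k(H)\cong\Pc_k(\C^n)\otimes H_k$, where $H_k$ is a complete prehilbertian space of holomorphic functions on $\C_+$ carrying via~\eqref{eq:6} a bounded (resp.\ isometric) representation $U_{s+k}$ of $\Aff(\C_+)$, and $H_k$ is strongly decent (the strong decency subspace for $\pi_k(H)$ may be taken of the form $\Pc_k(\C^n)\otimes W_k$, which produces one for $H_k$). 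For each $k$ with $H_k$ non-trivial, Theorem~\ref{teo:6} gives $k'_k\in\N$ with $s+k+2k'_k\Meg0$ and $H_k$ dense in $\Ac_{s+k,k'_k}(\C_+)$. But $H\subseteq\ker\partial_2^m$ forces $H_k\subseteq\Pc^m(\C)$, so $\Ac_{s+k,k'_k}(\C_+)=H_k+\Pc^{k'_k}(\C)$ is finite-dimensional; this occurs only when $s+k+2k'_k=0$, in which case $s\in-\N$, $k\in\N\cap(-s-2\N)$, $\Ac_{s+k,k'_k}(\C_+)=\Pc^{k'_k+1}(\C)$, and Proposition~\ref{prop:28} applied to the $\Aff(\C_+)$-invariant finite-dimensional $H_k$ yields $H_k=\Pc^{k'_k+1}(\C)$ with $k'_k=-(s+k)/2$. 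Since $\N\cap(-s-2\N)$ is finite, only finitely many $\pi_k(H)$ are non-trivial; letting $J$ be the set of these $k$ and using $\sum_k\pi_k=\mathrm{id}$, one gets $H=\bigl(\bigoplus_{j\in J}\pi_j(H)\bigr)+(H\cap V)$, so that $H$ contains the dense span of the $p\otimes(\,\cdot\,)^{-(s+j)/2}$, $p\in\Pc_j(\C^n)$, $j\in J$, on which, by orthogonality of the isotypic pieces together with Theorem~\ref{teo:6}, its seminorm is equivalent (resp.\ equal) to $\sum_{j\in J}\norm{\,\cdot\,}_j^2$ for suitable rotation-invariant Hilbert norms $\norm{\,\cdot\,}_j$ on $\Pc_j(\C^n)$. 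This gives the second alternative, with $J\neq\emptyset$ because $H$ is non-trivial.

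The step I expect to be the main obstacle is the reduction to $\C_+$ in Case $2$: since $H$ need not embed continuously into $\Hol(\Uc)$ but only into $\Hol(\Uc)/V$, the projections $\pi_k$, the factorization $\pi_k(H)\cong\Pc_k(\C^n)\otimes H_k$, and the strong decency of $H_k$ must all be extracted through the quotient by $V$, which requires combining the classification Proposition~\ref{prop:30} with Proposition~\ref{prop:31} carefully. By contrast, Case $1$ is immediate from Corollary~\ref{cor:1}, and the rest of Case $2$ — verifying that only finitely many isotypic pieces are non-trivial and reading off the seminorm — is routine.
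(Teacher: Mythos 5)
Your Case 2 essentially reproduces the paper's strategy (isotypic decomposition under the torus $\pi(\T)\subseteq U(n)$, the factorization $\pi_k(H)=\Pc_k(\C^n)\otimes_2 H_k$ after averaging the seminorm, strong decency of $H_k$, then Theorem~\ref{teo:6}), but your attempt to make Case 1 ``immediate from Corollary~\ref{cor:1}'' has a genuine gap. Corollary~\ref{cor:1} only says that the \emph{quotient} map $H/(H\cap\ker\partial_2^{k_0})\to\widehat\Ac_{s,k_0}$ is an isomorphism (resp.\ a multiple of an isometry); it does not say that the seminorm of $H$ itself is equivalent (resp.\ proportional) to the one induced by $\Ac_{s,k_0}$. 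For that you must also show that $H\cap\ker\partial_2^{k_0}$ is contained in the closure of $\Set{0}$ in $H$: a priori $H$ could contain a finite-dimensional invariant space of functions polynomial in $z$ of degree $<k_0$ (e.g.\ of the form $\Pc_j(\C^n)\otimes\C z^{d}$) carrying a \emph{positive} seminorm, and nothing in your Case 1 excludes this. In the paper this is precisely where the isotypic machinery is still needed in the ``unbounded-degree'' case: one first shows that every $H_k$ equals $\Ac_{s+k,h_k}(\C_+)$ (the sets $K_1,K_2$ are empty because $\overline H=\Hol(\Uc)$ by Proposition~\ref{prop:30}), and then that $H_k\cap\Pc^{h_0}(\C)=\Pc^{h_k}(\C)$, whence $H\cap\ker\partial_2^{h_0}=V$. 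So your dichotomy does not actually spare you the decomposition in Case 1; you must run it (or supply a substitute argument) there as well, and the ``resp.\ proportional'' statement fails to follow from your argument for the same reason.

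A second gap concerns saturation. You invoke Proposition~\ref{prop:32} and treat $\pi_k=U_s(\mi_k)$ as a well-defined idempotent \emph{of $H$}, but Proposition~\ref{prop:32} requires $H$ to be saturated, and without saturation the operator $\pi_k$ (defined on $\Hol(\Uc)$) only maps $H$ into $H+V$, not into $H$. The paper first replaces $H$ by its saturation $H+V$ (remark following Proposition~\ref{prop:31}), which puts $V$ inside the space and makes $\pi_k(H)\subseteq H$; you flag this quotient-handling as ``the main obstacle'' but leave it unresolved, and it matters exactly for your final claim in Case 2 that $H$ itself (rather than its saturation) contains $\bigoplus_{j\in J}[\Pc_j(\C^n)\otimes \Pc^{-(s+j)/2}(\C)]$. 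Apart from these two points, the remaining steps (the strict inequality $s+2k_0>0$ in Case 1, the identification of $H_k$ via Theorem~\ref{teo:6} and Proposition~\ref{prop:28} in Case 2, the finiteness of $J$, and the form of the seminorm) are sound and in line with the paper's argument.
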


Notice that Corollary~\ref{cor:1} shows that $\Ac_{s,k}$ is $
\Aff(\Uc)  $-$U_s$-invariant whenever $s+2k\Meg 0$. The fact that
the remaining spaces (of polynomials) described above are actually
$GL(\Uc)$-$U_s$-invariant (hence generate finite-dimensional
$\Aff(\Uc) $-$U_s$-invariant spaces of polynomials) is a simple verification left to the reader. We do not describe all the resulting invariant spaces, but we observe that all non-empty subsets $J$ of $\N\cap (-s-2\N)$ may occur, as one  may see by means of Proposition~\ref{prop:30}.

  Notice that the closure of $\Set{0}$ in $H$ may be determined by means of Proposition~\ref{prop:30}, at least when $H$ is saturated. 

\begin{proof}
	  Fix a $U(n)$-invariant Hilbert norm on $\Pc_k(\C^n)$ for every $k\in\N$.  
	
	By Proposition~\ref{prop:31} and the remark following its
        statement, we may assume that $H$ is saturated, so that the closure $V$ of $\Set{0}$ in
        $H$ is closed in $\Hol(\Uc)$, and that the canonical
        linear mapping $H\to \Hol(\Uc)/V$ is continuous.  In addition,
        Proposition~\ref{prop:32} shows that $H$ is separable and that
        $U_s$ induces a \emph{continuous} representation of $\Aff(\Uc)$ in $H$. Consequently, if $\mi$ is a (Radon)
        measure with compact support on $ \Aff(\Uc) $, then we
        may define $U_s(\mi)$ as in~\eqref{def:Usmi} both as an endomorphism $U_1$ of $\Hol(\Uc)$ and as an endomorphism $U_2$ of $H/V$. By the continuity of the mapping $H\to \Hol(\Uc)/V$, it is clear that $U_1$ induces $U_2$.
        
          Further, observe that we may assume that the seminorm of $H$ is $U(n)$-invariant, up to replace it with the seminorm induced by the equivalent scalar product $(f,g) \mapsto \int_{\phi\in U(n)}\langle U_s(\phi) f\vert U_s(\phi)g\rangle_H\,\dd \phi$.  
	
	Then, define $\pi_k$ as in~\eqref{def:pik} and observe that, by our additional
        assumption, the $\pi_k(H)$ are pairwise orthogonal subspaces
        of $H$, and $\pi_k(H)\cap \pi_{k'}(H)=V$ for every $k\neq
        k'$. In addition, arguing as in the proof of
        Proposition~\ref{prop:30}, we see that for every $k\in\N$
        there is a vector subspace $H_k$ of $\Hol(\C_+)$ such that
        $\pi_k(H)=\Pc_k(\C^n)\otimes H_k$. In addition, by
        Proposition~\ref{prop:30}, there is a sequence $(h_k)$ such
        that $h_0\in \N$ and $h_{k+1}\in \Set{h_k,(h_k-1)_+}$ for
        every $k\in\N$, and such that $V$ is the closure of
        $\bigoplus_k [\Pc_k(\C^n)\otimes \Pc^{h_k}(\C)]$ in
        $\Hol(\Uc)$.
	
	Now, for every $f\in H_k\setminus \Pc^{h_k}(\C)$, it is clear that $\Pc_k(\C^n)\otimes \C f$ is a $U(n)$-$U_s$-irreducible subspace of $\pi_k(H)$. 
	In addition, for every vector subspace $H'$ of $H_k$, the orthogonal complement of $\Pc_k(E)\otimes H'$ in $\pi_k(H)$ is of the form $\Pc_k(\C^n)\otimes H''$, for some vector subspace $H''$ of $H_k$ (containing $\Pc^{h_k}(\C)$).  
	Consequently, we may find a (possibly empty) sequence $(f_j)$ of elements of $H_k\setminus \Pc^{h_k}(\C)$ such that $\pi_k(H)$ is the orthogonal direct sum of $\Pc_k(\C^n)\otimes \Pc^{h_k}(\C)$ and the $\Pc_k(\C^n)\otimes \C f_j$. 
	If we choose the $f_j$ so that $\norm{p\otimes f_j}_H=1$ for every unit vector $p$ in $\Pc_k(\C^n)$ (this is possible since the seminorm of $H$ is $U(n)$-invariant), and we define a prehilbertian seminorm on $H_k$ so that its null space is $\Pc^{h_k}(\C)$ and $(f_j+\Pc^{h_k}(\C))$ is an orthonormal basis of $H_k/\Pc^{h_k}(\C)$, then clearly $H_k$ is complete and $\pi_k(H)=\Pc_k(\C^n)\otimes_2 H_k$.\footnote{Here, we denote by $\Pc_k(\C^n)\otimes_2 H_k$ the tensor product of the semi-Hilbert spaces $\Pc_k(\C^n)$ and $H_k$, endowed with the scalar product defined by $\langle p\otimes f\vert q\otimes g \rangle\coloneqq \langle p\vert q\rangle\langle f \vert g \rangle$ for every $p,q\in \Pc_k(\C^n)$ and for every $f,g\in H_k$. }
	
	Let us then prove that $H_k$ is strongly decent and saturated, that is, that the linear mapping $H_k\to \Hol(\C_+)/\Pc^{h_k}(\C)$ is continuous. Observe that  the mapping
	\[
	\Pc_k(\C^n)\otimes_2 H_k\to \Pc_k(\C^n)\otimes (\Hol(\C_+)/\Pc^{h_k}(\C))
	\]
	is continuous.\footnote{If $(f_j)$ is a sequence in $\Pc_k(\C^n)\otimes_2 H_k$ converging to $f$, then $(f_j)$ converges to $f$ in $H$, so that $(f_j+V)$ converges to $f+V$ in $\Hol(\Uc)/V$.  Applying $\pi_k$, we then see that $(f_j+\Pc_k(\C^n)\otimes \Pc^{h_k}(\C))$ converges to $f+\Pc_k(\C^n)\otimes \Pc^{h_k}(\C)$ in $\Pc_k(E)\otimes (\Hol(\C_+)/\Pc^{h_k}(\C))$.} 
	Since the bilinear mapping $(p,f)\mapsto f$ induces continuous linear mappings
	\[
	p\otimes f\mapsto f
	\]
	of $\Pc_k(\C^n)\otimes_2 H_k$ \emph{onto} $H_k$ and of $\Pc_k(\C^n)\otimes \Hol(\C_+)$ \emph{onto} $\Hol(\C_+)$,  our claim follows   by means of the open mapping theorem.  
	
	Now, observe that~\eqref{eq:6} shows that $H_k$ is $\Aff(\C_+)$-invariant, and that  the $U_{s+k}(\phi)$, as $\phi$ runs through $\Aff(\C_+)$, are equicontinuous endomorphisms of $H_k$ (resp.\ isometries). Consequently, Theorem~\ref{teo:6} implies that either one of the following hold:
	\begin{itemize}
		\item $H_k=\Pc^{h_k}(\C)$;
		
		\item $\lambda+k\in -2 \N$, $h_k=-(s+k)/2$, and $H_k=\Pc^{-(s+k)/2+1}(\C)$, with a norm equivalent (resp.\ proportional) to the following one:
		\[
		\norm*{\sum_{j=0}^{-(s+k)/2}a_j(\,\cdot\,)^j}= \abs{a_{-(s+k)/2}};
		\]
		
		\item  $s+k+2 h_k>0$ and $H_k=\Ac_{s+ k,h_k}(\C_+)$ with an equivalent (resp.\ proportional) seminorm.
	\end{itemize} 
	Denote by $K_1,K_2,K_3$ the sets of $k\in\N$ for which the first, the second, or the third case occurs. 
	
	Assume first that  $K_3\neq \emptyset$. 
	If $\overline H$ denotes the ($\Aff(\Uc)$-invariant) closure of $H$ in $\Hol(\Uc)$, then $\pi_k(\overline H)=\Pc_k(\C^n)\otimes \Hol(\C_+)$ for every $k\in K_3$, so that Proposition~\ref{prop:30} implies that $\overline H=\Hol(\Uc)$. Consequently, $K_1=K_2=\emptyset$ (since $\pi_k(\overline H)$ is finite-dimensional for $k\in K_1\cup K_2$). 
	
	Then, $V\subseteq \ker \partial_2^{h_0}$,   and $H\neq  H\cap \ker \partial_2^{h_0}$, so that Corollary~\ref{cor:1} implies that $H\subseteq \Ac_{s,h_0}$ continuously, and that the canonical mapping $H/V\to \widehat \Ac_{s,h_0}$ is an isomorphism onto its image. In addition,
	\[
	H\cap \ker \partial_2^{h_0}=\bigoplus_{k}[\Pc_k(\C^n)\otimes \Pc^{h_0}(\C)],
	\]
	and
	\[
	H_k\cap \Pc^{h_0}(\C)=\Ac_{s+ k,h_k}(\C_+)\cap \Pc^{h_0}(\C)=\Pc^{h_k}(\C),
	\] 
	since  $h_k\meg h_0$, $D^{h_k}\colon \Ac_{s+ k,h_k}(\C_+)\to \Ac_{s+k+2 h_k}(\C_+)$ is onto, and $D^{h_0-h_k}\colon \Ac_{s+k+2 h_k}(\C_+)\to \Ac_{s+k+2 h_0}(\C_+)$ is an isomorphism, thanks to Propositions~\ref{prop:23} and~\ref{prop:33}. Thus, $H\cap \ker \partial_2^{h_0}=V$, so that $H\subseteq\Ac_{s,h_0}(\Uc)$ with an equivalent (resp.\ proportional) seminorm.
	
	It then remains to consider the case in which
        $K_3=\emptyset$. Since $H$ is not trivial, $K_2$ must be
        non-empty, so that $s  \in - \N$. In addition, the preceding discussion shows that $H$ contains the orthogonal direct sum $\bigoplus_{k\in K_2} [\Pc_k(E)\otimes_2 H_k]$, and that this subspace is complete and dense (but not $\Aff(\Uc)$-invariant, in general). The proof is therefore complete.  
\end{proof}

\begin{teo}\label{teo:4}
	Take $s\in \R$. If $s\in -\N$, then define
	\[
	\widetilde \Ac_{s,1-s}\coloneqq \Set{f\in \Ac_{s,1-s}\colon \forall k\in \N\:\: \pi_k f\in \Pc_k(\C^n)\otimes \Ac_{s+k,(1-s-k)_+}(\C_+)},
	\]
	where $(\pi_k f)(\zeta,z)=\frac {1}{k!} \partial_1^k f(0,z) \zeta^k$ for every $f\in \Hol(\Uc)$, for every $(\zeta,z)\in \Uc$, and for every $k\in\N$. Then, $\widetilde \Ac_{s,1-s}$ is $\widetilde \Aut$-$\widetilde U_s$-invariant with its seminorm.
	
	Conversely, let $H$ be a non-trivial strongly decent and saturated  semi-Hilbert space of holomorphic functions such that $\widetilde U_s$ induces a bounded (resp.\ isometric) representation of  $\widetilde \Aut$ in $H$. Then, either $s\Meg 0$ and $H=\Ac_s$, or $s\in -\N$ and $H=\widetilde \Ac_{s,1-s}$ with an equivalent (resp.\ proportional) seminorm.
\end{teo}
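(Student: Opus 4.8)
The plan is to prove the two assertions separately, throughout decomposing $\Hol(\Uc)$ into the isotypic pieces $\pi_k\Hol(\Uc)=\Pc_k(\C^n)\otimes\Hol(\C_+)$ of the $U(n)$-action and exploiting that, by~\eqref{eq:7}, the inversion acts on the $k$-th piece, up to a unimodular constant, as $\mathrm{id}\otimes\widetilde U_{s+k}(\iota_0)$; this will reduce everything on each piece to the one-variable results of Section~\ref{sec:4bis}.

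For the direct statement, by Proposition~\ref{prop:27} it suffices to check that $\widetilde\Ac_{s,1-s}$, with its seminorm $f\mapsto\norm{\partial_2^{1-s}f}_{\Ac_{2-s}}$ (which decomposes orthogonally over the isotypic pieces, $\partial_2^{1-s}$ commuting with every $\pi_k$), is invariant under $\Aff(\Uc)$ and under a lift of $\iota$. Invariance under $GL(\Uc)$ and under horizontal translations is immediate from Corollary~\ref{cor:1}, these preserving each isotypic piece; the one subtle point is the Heisenberg part. Expanding $\pi_\ell\bigl(U_s(\zeta_0,x_0)(p\otimes g)\bigr)$, with $p\in\Pc_m(\C^n)$, by Leibniz — exactly as in Step~II of the proof of Proposition~\ref{prop:30} — produces a finite sum of terms $\bigl(\text{element of }\Pc_\ell(\C^n)\bigr)\otimes g^{(\ell-j)}(\,\cdot\,+i\abs{\zeta_0}^2)$ with $0\meg j\meg\min(\ell,m)$, and since $s+\ell\meg s+m+2(\ell-j)$, Lemma~\ref{lem:20} (a vertical translation lowers the admissible Bergman index) will place each such term in $\Ac_{s+\ell,(1-s-\ell)_+}(\C_+)$; thus $U_s(\zeta_0,x_0)(p\otimes g)$ again lies in $\widetilde\Ac_{s,1-s}$, and on $\Aff(\Uc)$ the seminorm is preserved by Lemma~\ref{lem:1} together with the $\widetilde U_{2-s}$-invariance of $\Ac_{2-s}$ (Theorem~\ref{teo:1}). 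Invariance under $\iota$, and of the seminorm, then follows from~\eqref{eq:7} and the fact that $\Ac_{s+k,(1-s-k)_+}(\C_+)$ is $\widetilde\Aut(\C_+)$-$\widetilde U_{s+k}$-invariant with proportional seminorm, by Theorem~\ref{teo:6} applied with $s$ replaced by $s+k$ (treating $s+k>0$, $s+k=0$ and $s+k<0$ separately).

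For the converse, I would first note that $\widetilde U_s$ being a bounded (resp.\ isometric) representation of $\widetilde\Aut$ makes $H$ in particular $\Aff(\Uc)$-$U_s$-invariant (hence $G_T$-invariant), and that the closure $V$ of $\{0\}$ in $H$ is a closed $\widetilde\Aut$-invariant subspace of $\Hol(\Uc)$ by saturation, so $V=\{0\}$ or $V=\Pc^{1-s}(\C^{n+1})$ with $s\in-\N$, by Proposition~\ref{prop:30}. If $V=\{0\}$, then Proposition~\ref{prop:31}(iii) shows $H$ embeds continuously in $\Hol(\Uc)$, so it is a non-trivial reproducing kernel Hilbert space and Theorem~\ref{teo:1} gives $s\Meg0$ and $H=\Ac_s$ with an equivalent (resp.\ proportional) norm. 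If $V=\Pc^{1-s}\neq\{0\}$ (so $s\in-\N$), I would invoke Theorem~\ref{teo:3}: the polynomial alternative there would force $H$ to coincide with its finite-dimensional closure, whence $V\subseteq\overline H\in\{\{0\},\Pc^{1-s}\}$ and $H$ would be trivial, a contradiction; so $H$ is dense in some $\Ac_{s,k}$ with $s+2k>0$, and since $\Ac_{s,k}$ is dense in $\Hol(\Uc)$ we obtain $\overline H=\Hol(\Uc)$. Then I would run the isotypic reduction from the proof of Theorem~\ref{teo:3}: $\pi_kH=\Pc_k(\C^n)\otimes_2 H_k$ with each $H_k$ a strongly decent, saturated, $\widetilde\Aut(\C_+)$-$\widetilde U_{s+k}$-invariant semi-Hilbert space of holomorphic functions on $\C_+$, dense in $\Hol(\C_+)$ (because $\overline H=\Hol(\Uc)$). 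Applying, for each $k$, the dichotomy $\overline{\{0\}}^{H_k}\in\{\{0\},\Pc^{1-s-k}\}$ (Proposition~\ref{prop:28}), Proposition~\ref{prop:31}(iii), Theorem~\ref{teo:1} on $\C_+$ — which excludes $\overline{\{0\}}^{H_k}=\{0\}$ when $s+k<0$, as it would force $H_k=\Ac_{s+k}(\C_+)$ with $s+k\Meg0$ — Theorem~\ref{teo:6}, and the density of $H_k$ in $\Hol(\C_+)$ — which excludes $H_k\in\{\{0\},\Ac_0(\C_+)\}$ — one is forced in every case to $H_k=\Ac_{s+k,(1-s-k)_+}(\C_+)$, so that $H=\widetilde\Ac_{s,1-s}$ as a set.

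The last and, I expect, most technical step is the comparison of seminorms. For each $k$ the $U(n)$-invariant Hilbert norm induced on $\Pc_k(\C^n)$ is, by irreducibility, proportional to a fixed reference norm, and the seminorm induced on $H_k$ is equivalent (resp.\ proportional) to that of $\Ac_{s+k,(1-s-k)_+}(\C_+)$; one has to check that the resulting normalizing constants are uniformly comparable over $k$ (resp.\ all equal). I would do this as in the proof of Theorem~\ref{teo:1}: apply $U_s$ to the reproducing kernel $B^{-s}_{(0,i)}$ — for $s\meg0$, to a primitive of it, using Remark~\ref{oss:2} and Proposition~\ref{prop:33} — along Heisenberg elements $(\zeta_0,0)$, whose images have all isotypic components non-zero, and compare norms via $\sup_{\phi\in\Aff(\Uc)}\norm{U_s(\phi)}_{\Lin(H)}<\infty$; in the isometric case, first pass to the equivalent invariant scalar product obtained by averaging over an invariant mean on the amenable group $\Aff(\Uc)$, and then produce a genuine intertwining operator onto $\Ac_s$ (resp.\ $\widetilde\Ac_{s,1-s}$) using transitivity of $\Aff(\Uc)$ on $\Uc$. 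The recurring obstacle throughout is keeping the isotypic bookkeeping consistent with the $\iota$-action and with saturation — in particular, recognising (via Proposition~\ref{prop:31}(iii) and Theorem~\ref{teo:1}) that saturation together with full $\widetilde\Aut$-invariance is exactly what rules out the pathological invariant algebraic complements that remain uncontrolled in the purely affine setting of Theorem~\ref{teo:6}.
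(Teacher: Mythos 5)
Your strategy is essentially the one the paper follows: isotypic decomposition under $U(n)$, reduction to one variable through~\eqref{eq:6} and~\eqref{eq:7}, Lemma~\ref{lem:20} for the Heisenberg part of the direct assertion, and, in the converse, the dichotomy for the null space $V$ coming from Proposition~\ref{prop:30}, density of $H$ in $\Hol(\Uc)$, and the componentwise identification $H_k=\Ac_{s+k,(1-s-k)_+}(\C_+)$ via Theorems~\ref{teo:1} and~\ref{teo:6}. Two remarks on the intermediate steps: the density of $H$ in $\Hol(\Uc)$ follows at once from Proposition~\ref{prop:30}, since the closure of $H$ in $\Hol(\Uc)$ is a closed $\widetilde\Aut$-$\widetilde U_s$-invariant subspace strictly containing $\Pc^{1-s}$; your detour through Theorem~\ref{teo:3} is not only superfluous but slightly off, because the density of $H$ in $\Ac_{s,k}$ furnished there is density for the \emph{seminorm}, not for compact convergence, so it does not by itself yield $\overline H\supseteq\Ac_{s,k}$ in $\Hol(\Uc)$. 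Also, the null space of each $H_k$ need not be found by a dichotomy: it is forced to be $\Pc^{(1-s-k)_+}(\C)$ because $V=\Pc^{1-s}(\C^{n+1})$.

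The genuine gap is the final step. Knowing $\pi_k(H)=\Pc_k(\C^n)\otimes\Ac_{s+k,(1-s-k)_+}(\C_+)$ for every $k$ does \emph{not} give ``$H=\widetilde\Ac_{s,1-s}$ as a set'': membership in $\Ac_{s,1-s}$ is a square-summability condition across the components, so the set equality already presupposes the uniform comparability over $k$ of the normalizing constants, which you defer to your last paragraph; and the scheme sketched there cannot deliver it. For $s\in-\N$ the function $B^{-s}_{(0,i)}$ is a polynomial of degree $-s$, hence lies in $V$ and has zero seminorm, and any primitive of it (indeed any polynomial) has only finitely many non-zero isotypic components, so moving it by Heisenberg elements compares only finitely many of the infinitely many constants; moreover Remark~\ref{oss:2} concerns $s,s'>0$, and ``producing an intertwining operator by transitivity'' is not an argument. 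The paper closes the proof differently, with tools you already have: from the description of the $\pi_k(H)$ one checks that $H\cap\ker\partial_2^{1-s}=V$ (such an $f$ has components in $\Pc_k(\C^n)\otimes\bigl(\Ac_{s+k,(1-s-k)_+}(\C_+)\cap\Pc^{1-s}(\C)\bigr)=\Pc_k(\C^n)\otimes\Pc^{(1-s-k)_+}(\C)$, hence $f\in\Pc^{1-s}(\C^{n+1})$); then the map $H\to\Hol(\Uc)/\ker\partial_2^{1-s}$ is continuous and non-trivial, and Corollary~\ref{cor:1} with $k=1-s$ gives in one stroke $H\subseteq\Ac_{s,1-s}$ with an equivalent (resp.\ proportional) seminorm and $\partial_2^{1-s}H=\Ac_{2-s}$ — here the transitivity of $G_T$ on all of $\Uc$, inside Theorem~\ref{teo:1}, is what compares all the components simultaneously. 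After that, $H\subseteq\widetilde\Ac_{s,1-s}$ by the form of the $\pi_k(H)$, and conversely every $f\in\widetilde\Ac_{s,1-s}$ is the limit in the $\Ac_{s,1-s}$-seminorm of its isotypic partial sums, which lie in $H$, so $f\in H+\bigl(\widetilde\Ac_{s,1-s}\cap\ker\partial_2^{1-s}\bigr)=H+\Pc^{1-s}=H$. This global application of Corollary~\ref{cor:1} is the missing ingredient in your proposal.
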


See~\cite[Theorem 5.5]{Arcozzietal} for a different description of $\widetilde \Ac_{0,1}$. 

\begin{proof}
	\textsc{Step I.} By the description of $GL(\Uc)$ provided in Subsection~\ref{sec:2:1}, it is clear that $	\widetilde \Ac_{s,1-s}$ is $GL(\Uc)$-$U_s$-invariant with its seminorm. Since $\Aff(\Uc)$ is the semi-direct product of $\Hb_n$ and $GL(\Uc)$ (cf.~Subsection~\ref{sec:2:1}), it will then suffice to prove that $	\widetilde \Ac_{s,1-s}$ is $\Hb_n$-invariant (the seminorm is then necessarily $\Hb_n$-invariant) and also $\widetilde U_\lambda(\iota)$-invariant with its seminorm, where $\iota\in \widetilde \Aut$ is such that
	\[
	\iota\colon \Uc\ni (\zeta,z)\mapsto \Big(-\frac{i\zeta}{z},-\frac{1}{z}\Big)\in \Uc.
	\]
	
	Observe first that the proof of Theorem~\ref{teo:3} shows that $\pi_k(\Ac_{s,1-s})=\Pc_k(\C^n)\otimes_2 \Ac_{s+k,1-s}(\C_+)$ with proportional seminorms, so that $\Pc_k(\C^n)\otimes_2 \Ac_{s+k,1-s}(\C_+)$ is a subspace of $ \Ac_{s,1-s}$, and carries a proportional seminorm. 
	In addition, observe that, in order to prove $\Hb_n$-invariance, it will suffice to prove that $U_s(\Hb_n) \pi_k(\widetilde \Ac_{s,1-s})\subseteq \widetilde \Ac_{s,1-s} $ for every $k\in \N$. Therefore, it will suffice to prove that $U_s(\Hb_n) (p\otimes f)\in \widetilde \Ac_{s,1-s}$ for every $p\in \Pc_k(\C^n)$ and for every $f\in  \Ac_{s+k, (1-s-k)_+}(\C_+)$.
	Then, take $(\zeta,x)\in \Hb_n$ with $\zeta\neq 0$, and observe that
	\[
	\begin{split}
		\pi_h(U_s (\zeta,x)(p\otimes f))(\zeta',z')&=\sum_{h'=(h-k)_+}^h\frac{1}{h'! (h-h')!} (2 i \langle\zeta'\vert\zeta\rangle)^{h'}\partial_{\zeta'}^{h-h'} p(\zeta) f^{(h')}(z'+x+i\abs{\zeta}^2)
	\end{split}
	\]
	for every $h\in \N$ and for every $(\zeta',z')\in \Uc$, so that it will suffice to prove that $f^{(h')}(\,\cdot\,+x+i\abs{\zeta}^2)\in \Ac_{s+h, (1-s-h)_+}(\C_+)$ for every $h\in \N$ and for every $h'=(h-k)_+,\dots,k$. Since $h'+(1-s-h)_+\Meg (1-s-k)_+$ and $s+ k+2[h'+ (1-s-h)_+]\Meg s+2(1-s-k)_+$, the assertion follows from Lemma~\ref{lem:20}. The case $\zeta=0$ is clear.
	
	Then, using~\eqref{eq:7}  and Proposition~\ref{prop:29}, we see that $\widetilde \Ac_{s,1-s}$ is $\widetilde U_s(\iota)$-invariant with its seminorm. Thus, $\widetilde \Ac_{s,1-s}$ is $\widetilde \Aut$-$\widetilde U_s$-invariant with its seminorm.
	
	\textsc{Step II.} Take $H$ as in the statement. 
	Denote by $V$ the closure of $\Set{0}$ in $H$, so that $V$ is closed in $\Hol(\Uc)$ and the canonical mapping $H\to \Hol(\Uc)/V$ is continuous (and non-trivial). 
	If $V=\Set{0}$, then Theorem~\ref{teo:1} shows that $s\Meg 0$ and that $H=\Ac_{s}$. If, otherwise, $V\neq \Set{0}$, then Proposition~\ref{prop:30} shows that $s\in -\N$ and that $V=\Pc^{1-s}(\C^{n+1})=\bigoplus_{k\meg -s} [\Pc_k(\C^n)\otimes \Pc^{1-s-k}(\C)]$. In addition, Proposition~\ref{prop:30} again implies that $H$ is dense in $\Hol(\Uc)$. Therefore, arguing as in the proof of Theorem~\ref{teo:3}, we see that $\pi_k(H)=\Pc_k(\C^n)\otimes \Ac_{s+k,(1+s+k)_+}(\C)$ for every $k\in \N$. In particular, $H\cap \ker\partial_2^{1-s}=V$, so that Corollary~\ref{cor:1} implies that $H\subseteq \Ac_{s,1-s}$ with an equivalent (resp.\ proportional) seminorm. It then follows that $H=\widetilde \Ac_{s,1-s}$.
\end{proof}

\begin{oss}\label{oss:3}
	Take $s\in -\N$ and $s'>0$, and assume that $n>0$. Then, the unitary representations $\widetilde U_s$ and $\widetilde U_{s'}$ of $\widetilde \Aut(\Uc)$ in $\widehat \Ac_{s,1-s}$, identified with the Hausdorff space associated with $\widetilde\Ac_{s,1-s}$, and in $\Ac_{s'}$, respectively, are \emph{not} (unitarily) equivalent.
\end{oss}

Notice that this contradicts~\cite[Theorem 5.4]{FarautKoranyi} when $n>0$.

\begin{proof}
	It suffices to observe that, if $\widetilde K$ denotes the stabilizer of $(0,i)$ in $\widetilde \Aut(\Uc)$, then $\Ac_{s'}$ contains a one-dimensional $\widetilde K$-$U_{s'}$-invariant subspace (namely, the one generated by $B^{-s'}_{(0,i)}$, cf.~Lemma~\ref{invariance:lem}), whereas the Hausdorff space associated with $\widetilde\Ac_{s,1-s}$ does not (cf.~Propositions~\ref{prop:38} and~\ref{prop:39})
\end{proof}

\section{Invariant Spaces on $B_{n+1}$}\label{sec:4}

We now indicate how the preceding results may be transferred to the bounded realization of $\Uc_{n+1}$, namely the unit ball $B_{n+1}$ in $\C^{n+1}$. Recall that the Cayley transform
\[
\Cc\colon B_{n+1}\ni (\zeta,z)\mapsto \left( \frac{\zeta}{1-z}, i \frac{1+z}{1-z} \right)\in \Uc_{n+1},
\]
with inverse
\[
\Cc^{-1} \colon \Uc_{n+1}\ni (\zeta,z)\mapsto \left(2i \frac{\zeta}{z+i} , \frac{z-i}{z+i}\right)\in B_{n+1},
\]
induces a birational biholomorphism of $B_{n+1}$ onto
$\Uc_{n+1}$. Then, $ \Aut (B_{n+1})=\Cc^{-1} \Aut(\Uc_{n+1})\Cc $ is
the group of biholomorphisms of $B_{n+1}$, and $\Cc^{-1} \Aff(\Uc)
\Cc$ is the stabilizer of $(0,1)$ in $\Aut(B_{n+1})$. Since this
latter group does not seem to be of particular significance in this
context, we shall consider only $\Aut(B_{n+1})$. Observe that, by an
abuse of notation, we may then identify $\widetilde \Aut(B_{n+1})$ with
$\Cc^{-1} \widetilde \Aut(\Uc_{n+1}
) \Cc$. 

In addition, observe that the complex Jacobian of $\Cc$ and $\Cc^{-1}$ are
\[
(J \Cc)(\zeta,z)=\frac{2 i}{(1-z)^{n+2}} \qquad \text{and} \qquad (J\Cc^{-1})(\zeta,z)=\frac{(2i)^{n+1}}{(z+i)^{n+2}}.
\]
Then, we define, for every $s\in \R$,
\[
(\Cc_s f)(\zeta,z)\coloneqq f(\Cc(\zeta,z)) \frac{(2i)^{s/(n+2)}}{(1-z)^{s}}
\]
for every $f\in\Hol(\Uc_{n+1})$ and for every $(\zeta,z)\in B_{n+1}$, so that
\[
(\Cc^{-1}_s f)(\zeta,z)=f(\Cc^{-1}(\zeta,z)) \frac{(2i)^{s(n+1)/(n+2)}}{(z+i)^s},
\]
for every $f\in \Hol(B_{n+1})$ and for every $(\zeta,z)\in \Uc_{n+1}$, where $(2i)^{s'}\coloneqq 2^{s'} \ee^{s'\pi i/2}$ for every $s'\in \R$.
If we define
\[
\widetilde \Us_s(\phi)\coloneqq \Cc_s \widetilde U_s(\Cc \phi\Cc^{-1})\Cc^{-1}_s
\]
for every $\phi\in \widetilde \Aut(B_{n+1})$ (with the above abuse of notation), then
\[
\widetilde \Us_s(\phi) f = (f\circ \phi^{-1}) (J \phi)^{s/(n+2)}
\]
for every $\phi\in \widetilde \Aut(B_{n+1})$ and for every $f\in \Hol(B_{n+1})$, for a suitable deifnition of $(J\phi)^{s/(n+2)}$. 

Thus, the $\Cc_s\Ac_s$, for $s\Meg 0$, are the $\widetilde \Aut(B_{n+1})$-$\widetilde \Us_s$-invariant RKHS in $\Hol(B_{n+1})$. 
Their reproducing kernel are given by
\[
[(\Cc_s\otimes \overline{\Cc_s}) B^{-s}](w,w')=2^{2s/(n+2)}(1-\langle w\vert w'\rangle)^{-s}
\]
for every $w,w'\in B_{n+1}$.

\begin{prop}
	If $s>n+1$, then 
	\[
	\Cc_s \Ac_s=\Set{f\in \Hol(B_{n+1})\colon  c'_s\int_{B_{n+1}} \abs{f(w)}^2 (1-\abs{w}^2)^{s-n-2} \,\dd w<\infty },
	\]
	endowed with the corresponding  norm, where $c'_s\coloneqq 2^{-2s/(n+2)} \frac{(s-1)(s-2)\cdots(s-n-1)}{ \pi^{n+1}}$.
\end{prop}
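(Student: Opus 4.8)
The plan is to transfer the known description of $\Ac_s$ as a weighted Bergman space on $\Uc_{n+1}$ to the ball by a direct change of variables through the Cayley transform, keeping careful track of all the multiplicative constants so as to recover the \emph{exact} normalization involving $c'_s$, rather than merely the equality of the underlying function spaces.

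First I would recall from the proof of Proposition~\ref{prop:33}, together with (ii) of Proposition~\ref{prop:23}, that for $s>n+1$ one has $\Ac_s=A^2_\sigma(\Uc_{n+1})$ as a set, where $\sigma\coloneqq(s-n-1)/2>0$, and that the relation $B^{-s}=c_\sigma^{-1}K_\sigma$ between the reproducing kernel of $\Ac_s$ and that of $A^2_\sigma(\Uc_{n+1})$, with $c_\sigma=\tfrac{(2\sigma+n)\cdots(2\sigma)}{4\pi^{n+1}}=\tfrac{(s-1)(s-2)\cdots(s-n-1)}{4\pi^{n+1}}$, forces
\[
\norm{f}_{\Ac_s}^2=c_\sigma\int_{\Uc_{n+1}}\abs{f(\zeta,z)}^2\rho(\zeta,z)^{2\sigma-1}\,\dd(\zeta,z)
\]
for every $f\in\Ac_s$. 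Next I would record the elementary identity
\[
\rho(\Cc(\zeta,z))=\frac{1-\abs{\zeta}^2-\abs{z}^2}{\abs{1-z}^2}=\frac{1-\abs{(\zeta,z)}^2}{\abs{1-z}^2},\qquad (\zeta,z)\in B_{n+1},
\]
which follows at once from the explicit form of $\Cc$, and recall that $(J\Cc)(\zeta,z)=2i/(1-z)^{n+2}$, so that $\abs{(J\Cc)(\zeta,z)}^2=4\,\abs{1-z}^{-2(n+2)}$. Applying the (complex) change-of-variables formula $\int_{\Uc_{n+1}}g(w')\,\dd w'=\int_{B_{n+1}}g(\Cc(w))\,\abs{(J\Cc)(w)}^2\,\dd w$ to $g=\abs{f}^2\rho^{2\sigma-1}$, substituting $\abs{f(\Cc(\zeta,z))}^2=\abs{(\Cc_s f)(\zeta,z)}^2\,\abs{1-z}^{2s}\,2^{-2s/(n+2)}$ (which holds since $\abs{(2i)^{s'}}=2^{s'}$), and collecting the powers of $\abs{1-z}$, I obtain
\[
\int_{\Uc_{n+1}}\abs{f}^2\rho^{2\sigma-1}=4\cdot 2^{-2s/(n+2)}\int_{B_{n+1}}\abs{(\Cc_s f)(\zeta,z)}^2\big(1-\abs{(\zeta,z)}^2\big)^{2\sigma-1}\abs{1-z}^{2s-2(2\sigma-1)-2(n+2)}\,\dd(\zeta,z).
\]
The exponent of $\abs{1-z}$ equals $0$ precisely because $2\sigma=s-n-1$, while $2\sigma-1=s-n-2$; combining this with the constant of the first step, $c_\sigma\cdot4\cdot2^{-2s/(n+2)}=2^{-2s/(n+2)}\tfrac{(s-1)\cdots(s-n-1)}{\pi^{n+1}}=c'_s$, gives
\[
\norm{f}_{\Ac_s}^2=c'_s\int_{B_{n+1}}\abs{(\Cc_s f)(w)}^2(1-\abs{w}^2)^{s-n-2}\,\dd w .
\]
Since $\Cc_s$ is a linear bijection of $\Hol(\Uc_{n+1})$ onto $\Hol(B_{n+1})$ which is by construction a unitary isomorphism of $\Ac_s$ onto $\Cc_s\Ac_s$, this identity shows at once that $\Cc_s\Ac_s$ consists exactly of the $g\in\Hol(B_{n+1})$ for which the last integral is finite, and that its norm is the one in the statement.

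The computation itself is routine; the only points that require care are the bookkeeping of the powers of $(1-z)$ — i.e.\ checking that $2s-2(2\sigma-1)-2(n+2)=0$ — and the matching of the scalar constants, which is exactly where the value of $c'_s$ and the exponent $s/(n+2)$ in the definition of $\Cc_s$ come into play. As a cross-check one may instead argue through reproducing kernels: $\Cc_s\Ac_s$ has kernel $2^{2s/(n+2)}(1-\langle w\vert w'\rangle)^{-s}$ (computed just before the statement), while the weighted Bergman space on the right-hand side has kernel a positive multiple of $(1-\langle w\vert w'\rangle)^{-s}$ by the standard Bergman kernel formula on $B_{n+1}$, so the two Hilbert spaces coincide and the only remaining content is the identification of the constant.
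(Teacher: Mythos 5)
Your computation is correct and is precisely the ``tedious but simple computation'' the paper leaves to the reader: the kernel relation $B^{-s}=c_{(s-n-1)/2}^{-1}K_{(s-n-1)/2}$ gives $\norm{f}_{\Ac_s}^2=c_{(s-n-1)/2}\norm{f}_{A^2_{(s-n-1)/2}}^2$, and the change of variables through $\Cc$ with $\abs{J\Cc}^2=4\abs{1-z}^{-2(n+2)}$ and $\rho\circ\Cc=(1-\abs{w}^2)/\abs{1-z}^2$ makes the powers of $\abs{1-z}$ cancel and produces exactly $c'_s$. Nothing is missing.
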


The proof is a tedious but simple computation which is left to the reader.

In order to provide a description of the remaining spaces, we shall consider the stabilizer $\Kc$ of $0$ in $\Aut(B_{n+1})$, which corresponds to the stabilizer $K$ of $(0,i)$ in $\Aut(\Uc_{n+1})$ by means of the Cayley transform. It is well known that $\Kc$ is the unitary group $U(n+1)$ on $\C^{n+1}$ (cf., e.g.,~\cite[Section 2.1]{Knapp}). Observe that, if we define
\[
\Us(\phi)f\coloneqq f\circ \phi^{-1}
\]
for every $\phi\in \Kc$ and for every $f\in \Hol(B_{n+1})$, then  $\Us$ is a continuous representation of $\Kc$ in $\Hol(B_{n+1})$. If we denote by $\widetilde \Kc$ the pre-image of $\Kc$ in $\widetilde \Aut(B_{n+1})$, then there is a character $\chi_s$ on $\widetilde \Kc$ (namely, $\phi\mapsto (J\phi)^{-s/(n+2)}(0)$) such that $\chi_s \Us_s$ is the representation of $\Kc$ in $\Hol(B_{n+1})$ induced by $\Us$. Consequently, a vector subspace of $\Hol(B_{n+1})$ is $\Kc$-$\Us$-invariant  if and only if it is $\widetilde \Kc$-$\Us_s$-invariant for every $s\in \R$. Analogously, $f\in \Hol(\Uc_{n+1})$ is a finite $\Kc$-$\Us$-vector, that is, generates a finite-dimensional $\Kc$-$\Us$-invariant vector subspace, if and only if it is a finite $\widetilde \Kc$-$\widetilde \Us_s$-vector for every $s\in \R$.

\begin{prop}\label{prop:38}
	The following hold:
	\begin{enumerate}
		\item[\textnormal{(1)}] the space $\Pc$ of holomorphic polynomials is the space of finite $\Kc$-$\Us$-vectors (or, equivalently, finite $\T$-$\Us$-vectors) in $\Hol(B_{n+1})$;
		
		\item[\textnormal{(2)}] for every $k\in\N$, the space $\Pc_k$ of homogeneous holomorphic polynomials of degree $k$ is $\Kc$-$\Us$-invariant and irreducible;
		
		\item[\textnormal{(3)}] if $X$ is a $\T$-$\Us$-invariant vector subspace of $\Hol(B_{n+1}) $ such that $\Us(f) X\subseteq X$ for every $f\in L^1(\T)$,\footnote{This happens, for example, if $\Us$ induces a continuous representation of $\T$ in $X$.} then $\Pc\cap \overline X=\Pc \cap X$, where $\overline X$ denotes the closure of $X$ in $\Hol(B_{n+1})$;
		
		\item[\textnormal{(4)}]   if $X$ is a strongly decent and saturated semi-Banach space of holomorphic functions on $B_{n+1}$ such that $\Us$ induces a \emph{continuous} representation of $\T$ in $X$, then $X\cap \Pc$ is dense in $X$;  
		
		\item[\textnormal{(5)}]   if $X$ is a semi-Banach space of holomorphic functions on $B_{n+1}$ such that  $\T$ induces a \emph{continuous} representation of $\T$ in $X$ and such that $\widetilde \Us_s(\phi)$ induces a continuous automorphism of $X$ for every $\phi\in \widetilde \Aut(B_{n+1})$, then $X$ is strongly decent and saturated if and only if for every $f\in L^1(\T)$, defining $\Us(f)$ as an endomorphism of $\Hol(B_{n+1})$, one has $\Us(f)X\subseteq X$ and  $\langle x', \Us(f) x\rangle=\int_\T f(\alpha)\langle x',\Us(\alpha)x\rangle\,\dd \alpha$ for every $x\in X$ and for every $x'\in X'$.
	\end{enumerate}
\end{prop}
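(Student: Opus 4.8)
The plan is as follows.

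\textbf{Parts (1), (2), and a preliminary device.} Assertion (2) is classical: $\Kc=U(n+1)$ preserves homogeneity, and $\Pc_k$ realizes the $k$-th symmetric power of the standard representation of $U(n+1)$, which is irreducible (the representation-theoretic fact invoked through~\cite[Theorem 2.1]{FarautKoranyi2} in the proof of Proposition~\ref{prop:30}). For (1), note that $\T$, acting by scalar multiplication, is contained in $\Kc$, and that $\Us(\alpha)$ multiplies the homogeneous component of degree $k$ of any $f\in\Hol(B_{n+1})$ by $\overline\alpha^{\,k}$; hence the linear span of the $\T$-orbit of $f$ equals the span of the non-zero homogeneous components of $f$, which is finite-dimensional precisely when $f\in\Pc$. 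Thus the polynomials are exactly the finite $\T$-$\Us$-vectors, and a fortiori the finite $\Kc$-$\Us$-vectors, the reverse inclusion being clear from (2). Along the way I record that for every $k\in\N$ the projection $\pi_k$ of $\Hol(B_{n+1})$ onto $\Pc_k$ is $\pi_k=\int_\T\alpha^k\Us(\alpha)\,\dd\alpha=\Us(g_k)$ with $g_k\colon\alpha\mapsto\alpha^k$, so that $\sum_{k\le N}\pi_k$ is $\Us$ of a trigonometric polynomial; this is the main technical tool for the remaining parts.

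\textbf{Parts (3) and (4).} For (3), since $\Us(f)X\subseteq X$ for all $f\in L^1(\T)$ we have $(\sum_{k\le N}\pi_k)X\subseteq X\cap\big(\bigoplus_{k\le N}\Pc_k\big)$; if $p\in\Pc\cap\overline X$ has degree $<N$ and $x_m\to p$ in $\Hol(B_{n+1})$ with $x_m\in X$, then applying the $\Hol(B_{n+1})$-continuous operator $\sum_{k\le N}\pi_k$ exhibits $p$ as a limit of elements of the finite-dimensional, hence closed, subspace $X\cap\big(\bigoplus_{k\le N}\Pc_k\big)$, so $p\in\Pc\cap X$; the reverse inclusion is trivial. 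For (4), Proposition~\ref{prop:31}(iii) provides $V\coloneqq\overline{\Set{0}}^X$ closed in $\Hol(B_{n+1})$ and a continuous $\Us$-equivariant map $X\to\Hol(B_{n+1})/V$; pushing the identity $\pi_kx=\Us(g_k)x$ forward through this map (passing the continuous linear map through the Bochner integral defining $\Us(g_k)$ on the continuous $\T$-representation in $X$) shows that $\pi_kx$ differs from the $X$-valued integral $\Us(g_k)x$ by an element of $V\subseteq X$, so $\pi_kx\in X\cap\Pc$. Then, for $\ell\in X'$ vanishing on $X\cap\Pc$ and $x\in X$, the continuous function $\alpha\mapsto\langle\ell,\Us(\alpha)x\rangle$ has all Fourier coefficients of the form $\langle\ell,\Us(g_k)x\rangle$, which vanish (for $k\ge0$ because $\pi_kx\in X\cap\Pc$ and $\ell$ kills $V$, for $k<0$ because $\Us(g_k)x=0$); hence the function is identically $0$, so $\langle\ell,x\rangle=0$ and $X\cap\Pc$ is dense in $X$.

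\textbf{Part (5).} For ``$\Rightarrow$'' one argues as in (4): with $V=\overline{\Set{0}}^X$ closed in $\Hol(B_{n+1})$ and $X\to\Hol(B_{n+1})/V$ continuous (Proposition~\ref{prop:31}(iii)), for $f\in L^1(\T)$ the operator $\Us(f)$ on $\Hol(B_{n+1})$ and the bounded operator $\Us(f)$ on $X$ defined by the Bochner integral agree modulo $V\subseteq X$; hence $\Us(f)X\subseteq X$, and since elements of $X'$ annihilate $V$ and the $X$-operator satisfies $\langle x',\Us(f)x\rangle=\int_\T f(\alpha)\langle x',\Us(\alpha)x\rangle\,\dd\alpha$ by definition of the Bochner integral, the pairing identity follows. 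For ``$\Leftarrow$'' — the substantial direction — assume the convolution identities. They first give $\pi_kX\subseteq X$; and since $\pi_kx$ differs from the bounded-operator image $\Us(g_k)x$ only by an element of $V\coloneqq\overline{\Set{0}}^X$ (whose $X$-seminorm is $0$), each $\pi_k$ is bounded on $X$. Next, $V$ is invariant under every $\widetilde\Us_s(\phi)$, $\phi\in\widetilde\Aut(B_{n+1})$, as these induce automorphisms of $X$, so its $\Hol(B_{n+1})$-closure is a closed $\widetilde\Aut(B_{n+1})$-$\widetilde\Us_s$-invariant subspace, which by the $B_{n+1}$-form of Propositions~\ref{prop:28} and~\ref{prop:30} (transported through the Cayley transform) is $\Set{0}$, a finite-dimensional space (necessarily consisting of polynomials, being also $\pi_k$-invariant and hence the direct sum of its finitely many non-zero homogeneous components), or all of $\Hol(B_{n+1})$. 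In the first two cases $V$ is finite-dimensional, closed in $\Hol(B_{n+1})$, and equal to $\bigoplus_k(V\cap\Pc_k)$; a monomial-coefficient functional is then continuous on $X$ whenever it annihilates the relevant $V\cap\Pc_k$ (it factors through the bounded operator $\pi_k\colon X\to X\cap\Pc_k$, whose induced seminorm has null space $V\cap\Pc_k$), and such functionals separate points of $X/V$, so the closed graph theorem applied to the canonical injection $X/V\to\Hol(B_{n+1})/V$ (Banach into Fréchet) gives its continuity; Proposition~\ref{prop:31}(iii) then shows $X$ is strongly decent and saturated. In the remaining, degenerate, case $V$ is dense in $\Hol(B_{n+1})$, whence $\pi_k(V)=V\cap\Pc_k=\Pc_k$ for all $k$, so $\Pc\subseteq V$ and the $X$-seminorm vanishes on $\Pc$; a short direct argument (using part (3)) then yields that $V$ is closed in $\Hol(B_{n+1})$ and $X\to\Hol(B_{n+1})/V$ continuous, again giving the conclusion.

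The step I expect to be the real obstacle is this converse in (5): pinning down $\overline{\Set{0}}^X$ — for which the classification of the closed $\widetilde\Aut$-$\widetilde\Us_s$-invariant subspaces of $\Hol(B_{n+1})$ is the decisive input — and then running the closed graph argument while carefully tracking exactly which continuous linear functionals on $\Hol(B_{n+1})$ restrict continuously to $X$, in particular in the degenerate situations.
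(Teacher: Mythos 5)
Parts (1)--(4) and the two non-degenerate cases of (5) are correct and largely parallel to the paper's proof: your (3) with a single partial-sum operator and your (4) via vanishing Fourier coefficients of $\alpha\mapsto\langle \ell,\Us(\alpha)x\rangle$ are harmless variants of the paper's per-degree projections and Fej\'er means. The one genuinely different step is the endgame of the converse in (5): after locating $V=\overline{\{0\}}^{X}$ through the classification of closed invariant subspaces (Remark~\ref{oss:1}), the paper identifies $V$ as $\{0\}$ or $\Pc^{1-s}$ and uses the transitivity of $\widetilde\Aut(B_{n+1})$ to transport the functionals $f\mapsto f(0)$, resp.\ $f\mapsto \partial_v^{1-s}f(0)$, to every point of the ball (with a footnote checking that the transported operator has no lower-order terms), whereas you use boundedness of the $\pi_k$ on $X$, continuity on $X$ of the Taylor-coefficient functionals annihilating $V\cap\Pc_k$, and the closed graph theorem for $X/V\to \Hol(B_{n+1})/V$. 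This works and is arguably more economical; just note that the closed-graph step needs not only that these functionals separate $X/V$ but that their common kernel in $\Hol(B_{n+1})$ is exactly $V$, which follows from the same finite-dimensionality argument (a function all of whose homogeneous components lie in the finite-dimensional space $V=\bigoplus_k (V\cap\Pc_k)$ is a polynomial belonging to $V$).

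The gap is your treatment of the degenerate case of (5) in which the closure of $V$ in $\Hol(B_{n+1})$ is the whole space. There, as your own reduction (and the paper's) shows, $\Pc\subseteq V$ and the density of $X\cap\Pc$ in $X$ force $X=V$, i.e.\ the seminorm of $X$ vanishes identically; and then the claimed ``short direct argument (using part (3))'' yielding that $V$ is closed in $\Hol(B_{n+1})$ does not exist. Concretely, let $X$ be the space of $f\in\Hol(B_{n+1})$ which extend holomorphically to a neighbourhood of $\overline{B_{n+1}}$, equipped with the zero seminorm: it is complete, $\widetilde\Us_s(\phi)$-invariant for every $\phi\in\widetilde\Aut(B_{n+1})$, the representation of $\T$ in $X$ is (trivially) continuous, $\Us(f)X\subseteq X$ for every $f\in L^1(\T)$, and the pairing identity is vacuous because $X'=\{0\}$; yet $X=V$ is dense in, and not closed in, $\Hol(B_{n+1})$, so $X$ is not saturated. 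Hence the right-hand condition of (5) cannot imply saturation in this corner, and part (3) cannot rescue it. The paper's proof sidesteps exactly this point with the words ``Since $X\neq V$'', i.e.\ it tacitly assumes that the seminorm of $X$ is not identically zero; you should make the same assumption explicit (equivalently, $X\neq\overline{\{0\}}^{X}$, or $X'\neq\{0\}$) and drop the claim that the degenerate case can be handled.
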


Note that (1) and (2) are well known, whereas (3) and (4) are essentially based on~\cite[Proposition 1]{ArazyFisher2}. In (5) we essentially compare our strong decency (and saturation) conditions with the weak integrability conditions considered, for example, in~\cite[Theorems 5.1, 5.2, and 5.3]{Arazy}.\footnote{Notice that, in the cited references, there is no mention to the continuity of the representation of $\T$ in $X$ induced by $\Us$. Nonetheless, the fundamental~\cite[Proposition 1]{ArazyFisher2} requires this assumption (at least, its proof does), so that this omission should be considered as a minor mistake. In addition, if one assumes that $X$ is a \emph{separable} semi-\emph{Hilbert} space, then the continuity follows from the measurability conditions which are needed to define the above integrals, thanks to~\cite[Corollary 2 to Proposition 18 os Chapter VIII, \S\ 4, No.\ 6]{BourbakiInt2}. }  Notice that, for symmetric domains of higher rank, the proper closed invariant subspaces of the space of holomorphic functions need not be finite-dimensional, so that it does not seem possible to establish a precise equivalence between these two notions.

Before passing to the proof, we translate Proposition~\ref{prop:30} in this context. The proof is a simple verification and is omitted.

\begin{oss}\label{oss:1}
	Let $V$ be a \emph{proper} closed vector subspace of $\Hol(B_{n+1})$, and take $s\in \R$. Then, $V$ is $\widetilde \Aut(B_{n+1})$-$\widetilde \Us_s$-invariant if and only if $V=\Set{0}$ or $s\in -\N$ and $V$ is the space $\Pc^{1-s}$ of holomorphic polynomials of degree $<1-s$ on $\C^{n+1}$.
\end{oss}

\begin{proof}[Proof of Proposition~\ref{prop:38}.]
	(1) It is clear that every $p\in\Pc$ is a finite $\Kc$-$\Us$-vector. Conversely, if $f$ is a finite $\T$-$\Us$-vector in $\Hol(B_{n+1})$, then 
	\[
	\pi_k f\coloneqq \frac{1}{k!}f^{(k)}(0)(\,\cdot\,)^k=\int_{\T} \alpha^{k} \Us(\alpha) f\,\dd \alpha
	\]
	belongs to the (finite-dimensional, hence closed) $\T$-$\Us$-invariant vector subspace of $\Hol(B_{n+1})$ generated by $f$ for every $k\in\N$. Hence, only finitely many of the $\pi_k f$ may be non-zero, so that $f$ is a polynomial.
	
	(2) This is a consequence of~\cite[Theorem 2.1]{FarautKoranyi2}.
	
	(3)   Define $\pi_k$ as in (1), and observe that $\pi_k (X)\subseteq X$ by the assumption, so that $\pi_k(X)=\Pc_k\cap X$. Then, take $f\in \overline X$, and let $(f_j)$ be a sequence in $X$ which converges to $f$ in $\Hol(B_{n+1})$. Then, $\pi_k(f_j)$ converges to $\pi_k(f)$ in $\Hol(B_{n+1})$. Since the $\pi_k(f_j)$ belong to the (finite-dimensional, hence) closed vector subspace $\pi_k(X)$ of $\Hol(B_{n+1})$, this proves that $\pi_k(f)\in \pi_k(X)$. Thus, $\pi_k(X)=\pi_k(\overline X)$, that is, $X\cap \Pc_k=\overline X\cap \Pc_k$, for every $k\in\N$.
	Thus, $X\cap \Pc=\overline X\cap \Pc$.  
	
	(4) By Proposition~\ref{prop:31}, if $V$ is the closure of $\Set{0}$ in $X$, then $V$ is closed in $\Hol(B_{n+1})$ and  the canonical mapping $X\to \Hol(B_{n+1})/V$ is continuous.  For every $k\in\N$,   define $\pi'_k\coloneqq \int_{\T} \alpha^{k} \Us(\alpha) \,\dd \alpha$ as a continuous linear mapping $X\to X/V$, so that, by the continuity of the linear mapping $X\to \Hol(B_{n+1})/V$, 
	\[
	\pi_k(f)\in \pi'_k(f) 
	\]
	for every $f\in X$, where $\pi_k$ is defined as in (1). In particular, $\pi'_k(X)\subseteq (X\cap \Pc+V)/V$. Then, observe that the properties of the Fejér kernel show that
	\[
	 \sum_{k=0}^m \left(1-\frac{k}{m+1}\right) \pi'_k(f)=\int_\T \Us(\alpha)f  \sum_{k=-m}^m \left(1-\frac{\abs{k}}{m+1}\right)\alpha^k \,\dd \alpha
	\] 
	converges to $f$ in the Banach space $X/V$, whence the result.
	
	(5)  Assume that $\Us$ induces a continuous representation of $\T$ in $X$ and that  $\Us(f)X\subseteq X$ and
	\[
	\langle x', \Us(f) x\rangle=\int_\T f(\alpha)\langle x',\Us(\alpha)x\rangle\,\dd \alpha
	\]  
	for every $f\in L^1(\T)$. In particular, $\Us(f)$ induces a continuous automorphism of $X$ for every $f\in L^1(\T)$.
	
	Observe that the closure $V$ of $\Set{0}$ in $X$ is  $\widetilde \Aut(B_{n+1})$-$\widetilde \Us_s$-invariant, so that Remark~\ref{oss:1} implies that either $V=\Set{0}$, or $s\in-\N$ and $V=\Pc^{1-s}$, or $V$ is dense in $\Hol(B_{n+1})$. Define $\pi_k$, for every $k\in \N$, as in (1), so that $\pi_k$ induces a continuous endomorphism of $X$ by the previous remarks.
	Observe that  (3) implies that $\Pc \cap V=\Pc\cap \overline V$, where $\overline V$ denotes the closure of $V$ in $\Hol(B_{n+1})$. In addition, arguing as in the proof of (4), we see that $\Pc\cap X$ is dense in $X$.	
	Since $X\neq V$, this proves that $V$ is not dense in $\Hol(B_{n+1})$. 
	
	If  $V=\Set{0}$, then the continuity of $\pi_0 $ on $X$ implies that the mapping $f \mapsto f(0)$ is continuous on $X$. Since $\widetilde \Us_s(\phi)$ is continuous on $X$ for every $\phi\in \widetilde H(B_{n+1})$ and since $\widetilde \Aut(B_{n+1})$ acts transitively on $B_{n+1}$, this implies that the mapping $L_w f\mapsto f(w)$ is continuous on $X$ for every $w\in B_{n+1}$. Since the common kernel of the $L_w$ in $\Hol(B_{n+1})$ is $\Set{0}=V$, it is clear that $X$ is strongly decent and saturated. 
	
	If, otherwise, $s\in -\N$ and $V=\Pc^{1-s}$, then the mapping
	\[
	L_{0,v}\colon X\ni f \mapsto \partial^{1-s}_v f(0)= \partial^{1-s}_v(\pi_{1-s} f)(0)\in \C
	\]
	is continuous for every $v\in \C^{n+1}$. Arguing as before, this implies that the mapping $L_{w,v}\colon X\ni f\mapsto \partial^{1-s}_v f(w)\in \C$ is continuous for every $w\in B_{n+1}$ and for every $v\in \C^{n+1}$.\footnote{  In general, one obtains that the mapping $X\ni f \mapsto Y_{\phi,v} f \in \C$ is continuous, where $Y_{\phi,v}\colon f \mapsto \partial^{1-s}_v [\widetilde U_s(\phi^{-1}) f](0)$, with $\phi\in \widetilde \Aut(B_{n+1})$ and $v\in \C^{n+1}$. It is clear that $Y_{\phi,v}$ is a differential operator at $w$, and  that its highest order term is $ (J\phi^{-1})^{s/(n+2)}(0) \partial_{(\phi^{-1})'(0)v }^{1-s} $. Since $Y_{\phi,v}$ induces a continuous linear functional on $X$, it must vanish on $\Pc^{1-s}$, so that it cannot have lower order terms. } Since the common kernel of these mappings is $\Pc^{1-s}=V$, $X$ is strongly decent and saturated also in this case.	
	
	The converse implication follows from   the continuity of the canonical mapping $X\to \Hol(\Uc_{n+1})/V$.
\end{proof}

We may now describe the spaces $\Cc_s \Ac_s$, for $s\Meg 0$, and $\Cc_s \widetilde \Ac_{s,1-s}$, for $s\in -\N$, in more precise, though quite abstract, way. We refer the reader to~\cite{Peloso,Yan} for other descriptions in terms of integro-differential seminorms.
We endow $\Pc$ with the Fisher inner product
\[
\langle p\vert q\rangle_\Fc\coloneqq p(\nabla) q^*,
\]
where $q^*\colon w \mapsto \overline{q(\overline w)}$, for ever $p,q\in \Pc$. Equivalently (cf.~\cite[Proposition XI.1.1]{FarautKoranyi}),
\[
\langle p\vert q\rangle_\Fc=\frac{1}{\pi^{n+1}}\int_{\C^{n+1}} p(w)\overline{q(w)}\ee^{-\abs{w}^2}\,\dd w
\]
for every $p,q\in \Pc$.

\begin{prop}\label{prop:39}
	Take $s\Meg 0$. Then, $\Cc_s \Ac_s$ is the space of $f=\sum_k f_k\in \Hol(B_{n+1})$, where $f_k$ is the homogeneous component of degree $k$ of $f$, such that
	\[
	\norm{f}_{\Cc_s \Ac_s}^2=  2^{-2 s/(n+2)}\sum_{k\in \N} \frac{1}{(s)^k} \norm{f_k}_\Fc^2<\infty,
	\]
	where $(s)^k=s(s+1)\cdots(s+k-1)$ for every $k\in\N$.\footnote{  When $s=0$, this means that $f_k=0$ for every $k\Meg 1$. }
	
	Take $s\in -\N$. Then, $\Cc_s \widetilde \Ac_{s,1-s}$ is the space of $f=\sum_k f_k\in \Hol(B_{n+1})$ such that
	\[
	  \norm{f}^2_{\Cc_s \widetilde \Ac_{s,1-s}}=2^{-2 (2-s)/(n+2)}\sum_{k\Meg 1-s} \frac{1}{(-s)!(k+s-1)!}\norm{f_k}_\Fc^2<\infty. 
	\]
\end{prop}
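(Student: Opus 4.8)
Both identities describe the space in question as a \emph{weighted Fisher space}: on each homogeneity level $\Pc_k(\C^{n+1})$ the relevant (semi)norm will turn out to be a positive multiple of the Fisher norm, and the whole problem reduces to identifying the resulting multipliers. The plan is therefore to pin down those multipliers, reproducing-kernel-theoretically for the first identity and, for the second, by combining $U(n+1)$-invariance with a reduction to a one-variable computation already handled by the first.

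For the first identity, assume $s\Meg 0$. Since $\Cc_s$ is an isomorphism of $\Hol(\Uc_{n+1})$ onto $\Hol(B_{n+1})$, the space $\Cc_s\Ac_s$ is a reproducing kernel Hilbert space whose kernel is the one computed just before the statement, namely $K^{(s)}(w,w')=2^{2s/(n+2)}(1-\langle w\vert w'\rangle)^{-s}$. I would expand it by the binomial series, $K^{(s)}(w,w')=2^{2s/(n+2)}\sum_{k\in\N}(s)^k\,\frac{\langle w\vert w'\rangle^k}{k!}$, and recall that $(w,w')\mapsto\frac{1}{k!}\langle w\vert w'\rangle^k=\sum_{\abs\alpha=k}\frac{w^\alpha\overline{w'^\alpha}}{\alpha!}$ is precisely the reproducing kernel $E_k$ of $\Pc_k(\C^{n+1})$ for the Fisher inner product (one checks this on monomials, using $\langle w^\beta\vert w^\alpha\rangle_\Fc=\alpha!\,\delta_{\alpha\beta}$). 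Thus $K^{(s)}=\sum_k a_kE_k$ with $a_k=2^{2s/(n+2)}(s)^k$; since the $\Pc_k(\C^{n+1})$ lie in distinct homogeneity degrees, the standard description of the RKHS attached to a positive orthogonal sum of reproducing kernels gives $\Cc_s\Ac_s=\Set{f=\sum_kf_k:\sum_ka_k^{-1}\norm{f_k}_\Fc^2<\infty}$ with squared norm $\sum_ka_k^{-1}\norm{f_k}_\Fc^2$, which is the asserted formula. When $s=0$ one has $a_0=1$ and $a_k=0$ for $k\Meg1$, forcing $f_k=0$ for $k\Meg1$, in accordance with the footnote.

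For the second identity, let $s\in-\N$. Now $\Cc_s\widetilde\Ac_{s,1-s}$ is only a semi-Hilbert space; by Remark~\ref{oss:1} its null space is $\Pc^{1-s}$, and the polynomials are dense in it (Proposition~\ref{prop:38}, since the space is strongly decent and saturated). Moreover its seminorm is $\Us$-invariant under $\Kc=U(n+1)$, because $\widetilde\Us_s$ restricted to $\widetilde\Kc$ agrees with $\Us$ up to the unimodular character $\chi_s$; hence the associated Hausdorff space is the completion of $\bigoplus_{m\Meg1-s}\Pc_m(\C^{n+1})$, and since each $\Pc_m(\C^{n+1})$ is $U(n+1)$-irreducible, with the Fisher norm as its essentially unique invariant Hilbert norm, there is $\gamma_m>0$ with $\norm p^2=\gamma_m\norm p_\Fc^2$ for $p\in\Pc_m(\C^{n+1})$. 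It then suffices to compute $\gamma_m$, and for this I would evaluate $\norm{w_{n+1}^m}^2$. Pulling $w_{n+1}^m$ back by $\Cc_s^{-1}$ produces the function of $z$ alone $(2i)^{s(n+1)/(n+2)}(z-i)^m(z+i)^{-(m+s)}$ on $\Uc_{n+1}$, which lies in the $\pi_0$-component of $\widetilde\Ac_{s,1-s}$, i.e.\ a copy of $\Ac_{s,1-s}(\C_+)$ up to a proportionality constant; applying $\partial_2^{1-s}$ and pushing the result forward by $\Cc_{2-s}$ to $B_1$, a short computation---the $(1-s)$-th derivative kills the polynomial part of $(z-i)^m(z+i)^{-(m+s)}$, and after the Cayley transform the remaining finite sum collapses by the identity $\sum_j(-1)^j\binom{M}{j}=0$ for $M>0$---reduces everything to a single power $w^{m-1+s}$, whose squared norm is supplied by the first identity with $s$ replaced by $2-s$. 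Tracking the constants then produces the claimed value of $\gamma_m$.

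The main obstacle is precisely this last bookkeeping: keeping track of the powers of $2i$ coming from $\Cc_s^{-1}$, $\partial_2^{1-s}$ and $\Cc_{2-s}$; of the $k$-dependent proportionality constant relating the seminorm of $\widetilde\Ac_{s,1-s}$ on the $\pi_k$-graded pieces to the natural seminorm on the summands $\Pc_k(\C^n)\otimes_2\Ac_{s+k,(1-s-k)_+}(\C_+)$ (which is read off from the proof of Theorem~\ref{teo:3} together with Propositions~\ref{prop:23} and~\ref{prop:33}); and, for $n>0$, of the compatibility---automatic from $U(n+1)$-invariance, but needed in the argument---of these graded normalizations across the decomposition $\Pc_m(\C^{n+1})=\bigoplus_{k+j=m}\Pc_k(\C^n)\otimes\Pc_j(\C)$, on which the Fisher product factors as $\langle p_1\otimes q_1\vert p_2\otimes q_2\rangle_\Fc=\langle p_1\vert p_2\rangle_\Fc\,\langle q_1\vert q_2\rangle_\Fc$. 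A minor but necessary point is to verify that the abstract weighted-Fisher completion really embeds, with the correct topology, into $\Hol(B_{n+1})$ (resp.\ into $\Hol(B_{n+1})/\Pc^{1-s}$), which follows from strong decency and saturation via Proposition~\ref{prop:31}.
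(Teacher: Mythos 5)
Your proposal is correct and follows essentially the same route as the paper: for $s\Meg 0$ you compute the transferred norm by expanding the kernel $2^{2s/(n+2)}(1-\langle w\vert w'\rangle)^{-s}$ against the Fisher kernels of the $\Pc_k(\C^{n+1})$ (the paper's ``compute the transferred norm, or use Faraut--Kor\'anyi''), and for $s\in-\N$ you reduce by $\Kc$-invariance and irreducibility of each $\Pc_m(\C^{n+1})$ to the transferred seminorm of the monomials $w_{n+1}^m$, evaluated via $\partial_2^{1-s}$ and the $s\Meg 0$ case, which is exactly the paper's ``compute the transferred seminorm of $z^k$ using a generalization of Garrig\'os, Remark 4.5''. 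The only thing separating your sketch from a complete proof is the deferred constant bookkeeping (which, as you note, is where all the care is needed), and this is no less than the paper itself supplies.
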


\begin{proof}
	  For the first assertion, one may either compute the transferred  norm, or use~\cite[Corollary 3.7]{FarautKoranyi2}. For the second assertion, it is sufficient to compute the transferred seminom of $z^k$, $k\in\N$, using a suitable generalization of~\cite[Remark 4.5]{Garrigos}, whose proof is analogous to that of Proposition~\ref{prop:29}. 
\end{proof}

\section{Appendix: Mean-Periodic Functions}\label{sec:app}

  In this appendix, for future reference, we let $D$ be a general Siegel domain, so that
\[
D=\Set{(\zeta,z)\in \C^n\times \C^m\colon \Im z-\Phi(\zeta)\in \Omega},
\]
where $\Omega\subseteq \R^m$ is an open convex cone not containing affine lines ($\R_+^*$ in the cases considered in this paper) and $\Phi\colon \C^n\to \C^m$ is a non-degenerate hermitian quadratic form such that $\Phi(\zeta)\in \overline\Omega$ for every $\zeta\in \C^n$ (the scalar product on $\C^n$ in the cases considered in this paper). In this case, we then set $\rho(\zeta,z)=\Im z-\Phi(\zeta)$ for every $(\zeta,z)\in D$. 
We shall define $\Nc\coloneqq \C^n\times \R^m$, endowed with the group structure given by
\[
(\zeta,x)(\zeta',x')=(\zeta+\zeta', x+x'+2 \Im \Phi(\zeta,\zeta'))
\]
for every $(\zeta,x),(\zeta',x')\in \Nc$.   If we endow $\C^n\times C^m$ with the product  
\[
(\zeta,z)\cdot (\zeta',z')=(\zeta+\zeta', z+z'+2 i \Phi(\zeta',\zeta))
\]
for every $(\zeta,z),(\zeta',z')\in \C^n\times \C^m$, then $D$ is a semigroup of $\C^n\times \C^m$ and $\Nc$ may be identified with the subgroup $\rho^{-1}(0)$ of $\C^n\times \C^m$ by means of the isomorphism $(\zeta,x)\mapsto (\zeta,x+i\Phi(\zeta))$. Then, $\Nc$ acts on $D$ by affine biholomorphisms.  
We shall denote by $G_\delta$ the group of affine automorphisms of $D$ which is the semi-direct product of $\Nc$ and the group of the dilations $R\cdot (\zeta,z)=(R\zeta, R^2z)$.

We now present some results which may be interpreted in the spirit of the study of mean-periodic functions (cf., e.g.,~\cite{Schwartz}). We shall actually give a reasonable description of all mean-periodic functions in $\Hol(D)$ with respect to the group $G_\delta$, acting by composition.

\begin{prop}\label{prop:6}
	Let $V$ be a closed $G_\delta$-invariant subspace of $\Hol(D)$. Then, $V=\overline{\Pc\cap V}$, where $\Pc$ denotes the space of (holomorphic) polynomials on $D$ and $\overline{\Pc\cap V}$ denotes the closure of $\Pc\cap V$ in $\Hol(D)$.
\end{prop}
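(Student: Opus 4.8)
The plan is to show that every closed $G_\delta$-invariant subspace $V$ of $\Hol(D)$ contains a dense set of polynomials by exploiting the action of the dilation subgroup together with the Heisenberg-type group $\Nc$, in the spirit of spectral synthesis / mean-periodicity arguments. First I would use the dilations: for $R > 0$ write $\delta_R \cdot (\zeta,z) = (R\zeta, R^2 z)$ and observe that $f \circ \delta_R^{-1} \in V$ for every $f \in V$ and every $R>0$, since $V$ is $G_\delta$-invariant and $\delta_R^{-1} \in G_\delta$. Because $V$ is closed and the map $R \mapsto f\circ\delta_R^{-1}$ is continuous from $(0,\infty)$ into $\Hol(D)$, any integral $\int_0^\infty (f\circ\delta_R^{-1})\, \psi(R)\,\frac{\dd R}{R}$ with $\psi \in C_c^\infty(\R_+^*)$ lies in $V$ as well (it is a $\Hol(D)$-valued weak integral of a compactly supported continuous function, hence a limit of Riemann sums taken in $V$).

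Next I would decompose $f \in V$ according to the homogeneity degrees under the dilations. Expanding $f$ in its Taylor series at the origin and grouping terms of the same total weighted degree $\ell$ (giving $\zeta$-variables weight $1$ and $z$-variables weight $2$), one gets $f = \sum_{\ell \geq 0} f_\ell$ with $f_\ell \circ \delta_R^{-1} = R^{-\ell} f_\ell$, the series converging in $\Hol(D)$ by the standard theory of homogeneous expansions on a domain invariant under dilations (each $f_\ell$ is a polynomial because $D$ contains a neighborhood of some point and $f_\ell$ is the $\ell$-homogeneous component, which is forced to be polynomial by the local boundedness). Applying the dilation-averaging operator above with $\psi$ concentrating near a fixed value and using $\int_0^\infty R^{-\ell}\psi(R)\frac{\dd R}{R}$ as a Mellin transform, I can isolate each $f_\ell$: the operator $P_\ell : f \mapsto \frac{1}{2\pi}\int_0^\infty (f\circ\delta_R^{-1}) R^{\ell}\,\frac{\dd R}{R}$ — suitably interpreted as a Bochner-type integral over the dilation group, or more carefully as a limit of such over a Gaussian-type family — reproduces $f_\ell$ and maps $V$ into $V$. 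Hence each homogeneous component $f_\ell$ of each $f \in V$ already belongs to $V$, and since $f = \sum_\ell f_\ell$ in $\Hol(D)$, it follows that $\Pc \cap V$ (which contains all these $f_\ell$) is dense in $V$.

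The main obstacle, and the place requiring genuine care, is making the dilation-averaging/Mellin-projection rigorous: the dilation group $\R_+^*$ is not compact, so one cannot simply integrate $f\circ\delta_R^{-1}$ against $R^{\ell}\frac{\dd R}{R}$ (the integral diverges), and one must instead use a resolution of the identity — e.g. convolve against a family $\psi_\epsilon$ whose Mellin transforms tend to $1$ on a neighborhood of the relevant imaginary axis while staying in $C_c^\infty(\R_+^*)$, or equivalently first cut off $f$ in the dilation direction, extract $f_\ell$, and pass to the limit — and at each stage check that the resulting element stays in the closed subspace $V$. The role of $\Nc$ is secondary here but may be invoked if needed to upgrade ``each homogeneous component lies in $V$'' when $D$ is a genuine type-II domain: conjugating by $\Nc$-translations and re-averaging shows that $V \cap \Pc$ is itself $G_\delta$-invariant and in particular stable under the relevant derivations, which together with density finishes the argument. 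Once the projections $P_\ell$ are justified, the conclusion $V = \overline{\Pc \cap V}$ is immediate.
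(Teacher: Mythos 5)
Your proposal has a genuine gap, and it occurs at the first substantive step: the expansion $f=\sum_\ell f_\ell$ into weighted-homogeneous components ``at the origin'' does not exist for a general $f\in\Hol(D)$. The origin is a \emph{boundary} point of the Siegel domain $D$ (it lies in $\rho^{-1}(0)$), and elements of $\Hol(D)$ need not extend holomorphically to any neighbourhood of it: already on $\C_+=\Uc_1$ the functions $z\mapsto 1/z$ or $z\mapsto \ee^{i/z}$ belong to $\Hol(\C_+)$ and admit no Taylor expansion at $0$, let alone one converging on all of $\C_+$. For the same reason your parenthetical claim that a dilation-homogeneous holomorphic function on $D$ is ``forced to be polynomial'' fails: $z^{-1}$, $z^{1/2}$, and more generally $z^{\lambda}$ for arbitrary $\lambda\in\C$ are dilation-homogeneous elements of $\Hol(\C_+)$. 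So the Mellin-type projections $P_\ell$, even if made rigorous as limits of averages against $C^\infty_c(\R_+^*)$ cut-offs, would neither be applied to a convergent homogeneous series nor produce polynomials; the obstruction is not merely the non-compactness of the dilation group.

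This also shows why the $\Nc$-invariance cannot be ``secondary'': the statement is simply false for closed subspaces invariant under dilations alone. On $\C_+$ the one-dimensional space $\C\, z^{1/2}$ is closed, dilation-invariant, and contains no non-zero polynomial. The paper's proof uses the $\Nc$-invariance essentially and at the outset: pairing $f\in V$ with a measure $\mi\in V^\circ$ and exploiting the holomorphy in the second variable of $F_{f,\mi}$, which vanishes on the totally real orbit of $\Nc$, it first upgrades invariance under $\Nc$ to invariance under the semigroup translations $f\mapsto f((\zeta,z)\cdot\,)$ for all $(\zeta,z)\in D$. Only then do the dilations enter, through the limit as $R\to 0^+$ of $\frac{\dd^k}{\dd R^k}f((\zeta,z)\cdot[R\cdot(\zeta',z')])$, which places in $V$ the weighted Taylor polynomials of $f$ at \emph{interior} points $(\zeta,z)$ (not homogeneous components at the origin); a final bipolar argument, again using analytic continuation in the second variable, converts the local convergence of these Taylor expansions into the density of $\Pc\cap V$ in $V$. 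To repair your argument you would have to replace expansions at the origin by expansions at interior points, and that requires first proving the interior-translation invariance — precisely the step where $\Nc$ and holomorphy do the real work.
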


\begin{proof}
	Consider the duality between $\Hol(D)$ and the
        space  $\Ec'_0(D)$ of compactly supported Radon measures on $D$, induced by
        the natural inclusion of $\Hol(D)$ in the space $ C(D)$ of
        continuous functions on $D$ and the natural duality between
        $C(D)$ and $\Ec'_0(D)$. Observe that a vector subspace of
        $\Hol(D)$ is closed if and only if it is weakly closed with
        respect to this duality, so that, in particular,
        $V=V^{\circ\circ}$, where $V^\circ=\Set{\mi \in
          \Ec'_0(D)\colon\forall f\in V\:\: \langle \mi, f \rangle=0 }$ denotes the polar  of $V$, and $V^{\circ\circ}$ its bipolar.  
	Then, take $\mi\in V^\circ$, and consider, for every $f\in V$, the function
	\[
	F_{f,\mi}\colon D-(0,i v)\ni (\zeta,z)\mapsto \int_{\Supp \mi} f((\zeta,z)\cdot (\zeta',z'))\,\dd \mi(\zeta',z'), 
	\]
	where $v\in \Omega$ is such that $\Supp \mi\subseteq (0,i v)+ D$. Observe that clearly $F_{f,\mi}$ is well defined and holomorphic in the second variable. It is actually real analytic, but we shall not need this fact.

	Then, observe that $F_{f,\mi}(\zeta,x+i\Phi(\zeta))=0$ for every $(\zeta,x)\in \Nc$, so that, by the holomorphy in the second variable, $F_{f,\mi}(\zeta,z)=0$ for every $(\zeta,z)\in D-(0,i v)$. Consequently, the arbitrariness of $\mi$ implies that $f((\zeta,z)\,\cdot\,\,)\in V$ for every $(\zeta,z)\in D$.
	
	Now, observe that 
	\[
	\frac{\dd^k}{\dd R^k} f((\zeta,z) \cdot [R\cdot (\zeta',z')]  )= \sum_{  k_1+2 k_2=k}\frac{k!}{k_1! k_2!} f^{(k_1+k_2)}((\zeta,z)[R\cdot (\zeta',z')]) \cdot (\zeta',2 R z')^{k_1} \cdot (0,z')^{k_2}
	\]
	for every $k\in \N$, for every $R>0$, and for every $(\zeta,z),(\zeta',z')\in D$, so that  passing to the limit for $R\to 0^+$, we see that the polynomial mapping
	\[
	P_{f,k,(\zeta,z)}\colon(\zeta',z')\mapsto \sum_{  k_1+2 k_2=k} \frac{1}{k_1! k_2!} f^{(k_1+k_2)}(\zeta,z) \cdot (\zeta',0)^{k_1} \cdot (0,z')^{k_2}= \sum_{  k_1+2 k_2=k} \frac{1}{k_1! k_2!} \partial_{(\zeta',0)}^{k_1} \partial_{(0,z')}^{k_2} f(\zeta,z) 
	\]
	belongs to $V$ for every $k\in \N$ and for every $(\zeta,z)\in D$. Observe that, at least formally, 
	\[
	\sum_k P_{f,k,(\zeta,z)}=\sum_{k} \sum_{k_1+k_2=k}\frac{1}{k_1! k_2!} f^{(k)}(\zeta,z) \cdot (\zeta',0)^{k_1} \cdot (0,z')^{k_2}=\sum_k \frac{1}{k!} f^{(k)}(\zeta,z)\cdot (\zeta',z')^k,
	\]
	so that $f=\sum_k P_{f,k,(\zeta,z)}$ on every open set where the sum converges absolutely; in particular, on every open ball centred at $(\zeta,z)$ whose closure is contained in $D$. In a similar way, by translation one sees that the same holds on $(\zeta,z) \cdot B$, where $B$ is an open Euclidean ball centred at $(0,0)$ such that the closure of $(\zeta,z) \cdot B$ is contained in $D$. 
	
	Now, take $f\in V$ and $\mi\in (\Pc\cap V)^\circ$. In addition, take $R>0$ so that $ \Supp{\mi}$ is contained in the Euclidean ball $B_{E\times F_\C}((0,0),R)$ of centre $(0,0)$ and radius $R$,  and observe that, if $z\in D$ and $\rho(\zeta,z)$ is sufficiently large, then (the closure of) $(\zeta,z)\cdot B_{E\times F_\C}((0,0),R)$ is contained in $D$. 
	Observe that $\int P_{f,k,(\zeta,z)}\,\dd \mi=0$ for every $k\in \N$ and for every $(\zeta,z)\in D$, thanks to the preceding remarks. Since $P_{f,k,(\zeta,z)}=P_{f((\zeta,z)\,\cdot\,),k,(0,0)}$ and the series $\sum_{k} P_{f((\zeta,z)\,\cdot\,),k,(0,0)}$ converges  uniformly to $f((\zeta,z)\,\cdot\,)$ on $(\zeta,z)\cdot B_{E\times F_\C}((0,0),R)$ (if $\rho(\zeta,z)$ is sufficiently large) by the preceding remarks, we conclude that $F_{f,\mi}(\zeta,z)=0$ if $\rho(\zeta,z)$ is sufficiently large. Since $F_{f,\mi}$ is holomorphic in the second variable, this is sufficient to conclude that $F_{f,\mi}=0$. In particular, $F_{f,\mi}(0,0)=0$, that is, $\int f\,\dd \mi=0$.
	Therefore, the arbitrariness of $\mi$ implies that $f\in \overline{\Pc\cap V}$. The arbitrariness of $f$ then shows that $V=\overline{\Pc\cap V}$.
\end{proof}

\begin{cor}\label{lem:2}
	The space $\Pc$ of holomorphic polynomials on $D$ is dense in $\Hol(D)$.
\end{cor}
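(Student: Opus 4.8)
The plan is to deduce this directly from Proposition~\ref{prop:6} by applying it to the largest possible subspace, namely $V=\Hol(D)$ itself. This choice is legitimate: $\Hol(D)$ is trivially closed in $\Hol(D)$, and it is $G_\delta$-invariant because every element of $G_\delta$ is by definition an (affine) biholomorphism of $D$, and composition with a biholomorphism sends holomorphic functions to holomorphic functions. Hence the hypotheses of Proposition~\ref{prop:6} are satisfied.

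Applying Proposition~\ref{prop:6} with this choice of $V$ gives
\[
\Hol(D)=\overline{\Pc\cap \Hol(D)}=\overline{\Pc},
\]
where the closure is taken in $\Hol(D)$; this is exactly the statement that the space $\Pc$ of holomorphic polynomials on $D$ is dense in $\Hol(D)$. There is no genuine obstacle in this step: the entire work has already been carried out in the proof of Proposition~\ref{prop:6} (in particular the argument showing that, for $f$ in a closed $G_\delta$-invariant subspace, all the Taylor polynomials $P_{f,k,(\zeta,z)}$ again lie in that subspace, together with the absolute convergence of $\sum_k P_{f,k,(\zeta,z)}$ to $f$ on suitable balls). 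So the corollary is immediate, and the proof reduces to invoking the proposition.
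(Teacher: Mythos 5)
Your proposal is correct and coincides with the paper's (implicit) argument: the corollary is stated without proof precisely because it follows by applying Proposition~\ref{prop:6} to the closed $G_\delta$-invariant subspace $V=\Hol(D)$, which yields $\Hol(D)=\overline{\Pc\cap\Hol(D)}=\overline{\Pc}$. Nothing further is needed.
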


\begin{cor}\label{cor:7}
	Let $V$ be a  closed $G_\delta$-invariant subspace of
        $\Hol(D)$. Then, there is a set $\Ic$ of homogeneous\footnote{With respect to the dilations $R\cdot (\zeta,x)=(R\zeta, R^2 x)$.} distributions on $\Nc$ supported in $\Set{(0,0)}$ such that 
	\[
	V=\Set{f\in \Hol(D)\colon f*\Ic=0}.
	\]
\end{cor}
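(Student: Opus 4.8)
The plan is to produce $\Ic$ as a convolution annihilator of $V$ and then, via a second appeal to Proposition~\ref{prop:6}, to show that this annihilator cuts out precisely $V$.

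First I would recall that a distribution $\nu$ on $\Nc$ supported at the origin acts on $\Hol(D)$ by
\[
(f*\nu)(\zeta,z)\coloneqq\bigl\langle\nu,\,(\zeta',x')\mapsto f\bigl((\zeta,z)\cdot(\zeta',x'+i\Phi(\zeta'))\bigr)\bigr\rangle ,
\]
which (exactly as in the proof of Proposition~\ref{prop:6}) is a well-defined function on $D$, holomorphic in the variable $z$; moreover $f\mapsto f*\nu$ is continuous on $\Hol(D)$, commutes with the left action of $\Nc$, and is dilation-equivariant up to a scalar when $\nu$ is homogeneous for the dilations $R\cdot(\zeta,x)=(R\zeta,R^2x)$. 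Since a function holomorphic in $z$ that vanishes on $\Nc\cong\{(\zeta,x+i\Phi(\zeta))\}$ vanishes on all of $D$ — because $\Nc$ contains, for each $\zeta$, the maximally totally real affine subspace $\R^m+i\Phi(\zeta)$ of $\C^m$ — one has $f*\nu=0$ on $D$ if and only if $(f*\nu)|_\Nc=0$; in particular restriction to $\Nc$ is injective on the space $\Pc$ of holomorphic polynomials, so the bilinear form $B(p,\nu)\coloneqq(p*\nu)(0,0)$ separates the points of $\Pc$. Grading $\Pc=\bigoplus_d\Pc_d$ by dilation weight and observing that convolution with a homogeneous distribution shifts the weight, so that $B(p,\nu)$ can be nonzero only when the weights of $p$ and $\nu$ match, it follows (each $\Pc_d$ being finite dimensional) that the homogeneous distributions of weight $d$ surject, via $\nu\mapsto B(\,\cdot\,,\nu)$, onto $\Pc_d^{\,*}$.

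Next I would put $W\coloneqq\Pc\cap V$, so that $V=\overline W$ by Proposition~\ref{prop:6} while $W$, being $G_\delta$-invariant, is graded ($W=\bigoplus_d W_d$, $W_d=W\cap\Pc_d$) and invariant under the left action of $\Nc$; let $\Ic$ be the set of all homogeneous distributions $\nu$ on $\Nc$ supported at the origin with $w*\nu=0$ for every $w\in W$, and set $\widetilde V\coloneqq\{f\in\Hol(D)\colon f*\nu=0\ \forall\nu\in\Ic\}$. Then $\widetilde V\supseteq V$ (it is closed and contains the dense subspace $W$ of $V$), and $\widetilde V$ is itself closed and $G_\delta$-invariant, so Proposition~\ref{prop:6} reduces the reverse inclusion to the assertion $\Pc\cap\widetilde V\subseteq W$. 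The heart of the argument is the claim that a homogeneous $\nu$ of weight $d_0$ with $B(\,\cdot\,,\nu)|_{W_{d_0}}=0$ already lies in $\Ic$, i.e.\ annihilates all of $W$. I would prove this by induction on the weight $d'$ of a homogeneous $w\in W_{d'}$: for $d'<d_0$, $w*\nu$ is a homogeneous polynomial of negative weight, hence $0$; for $d'=d_0$ this is the hypothesis; and for $d'>d_0$, one notes that a left translation by $g\in\Nc$ replaces $\zeta$ by $\zeta$ plus a constant and $z$ by $z$ plus an affine term of weight $<2$, so that the homogeneous components of $L_gw\in W$ all have weight $\le d'$ and the one of weight $d'$ is again $w$; since $f\mapsto f*\nu$ commutes with $L_g$ and $\nu$ kills $W_e$ for all $e<d'$ (by weight reasons for $e<d_0$, by induction for $d_0\le e<d'$), this gives $L_g(w*\nu)=(L_gw)*\nu=w*\nu$ for every $g\in\Nc$. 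Hence $w*\nu$ is invariant under the simply transitive left action of $\Nc$ on $\Nc$, so it is constant on $\Nc$; being holomorphic in $z$ it is then constant on $D$, hence $0$, as it is homogeneous of positive weight $d'-d_0$.

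To conclude, let $p\in\Pc\cap\widetilde V$; since $\widetilde V$ is stable under the dilations I may assume $p\in\Pc_{d_0}$ is homogeneous. If $p\notin W_{d_0}$, I would choose $\ell\in\Pc_{d_0}^{\,*}$ with $\ell|_{W_{d_0}}=0$ and $\ell(p)\neq 0$, lift it to a homogeneous $\nu$ of weight $d_0$ with $B(\,\cdot\,,\nu)=\ell$ on $\Pc_{d_0}$ using the surjectivity above, note $\nu\in\Ic$ by the claim, and obtain the contradiction $p*\nu=B(p,\nu)=\ell(p)\neq 0$. Thus $p\in W_{d_0}\subseteq W$, whence $\Pc\cap\widetilde V=W$ and $\widetilde V=\overline W=V$. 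The one genuinely non-formal step — and therefore the main obstacle — is the inductive claim in the previous paragraph: verifying that left translations do not disturb the top-weight part of a homogeneous polynomial, and that an $\Nc$-invariant polynomial on $\Nc$ must be constant, is routine, but arranging the induction so that ``$\nu$ annihilates $W$ in its own weight'' propagates to all higher weights is precisely where the $G_\delta$-invariance of $V$ (beyond the mere grading) is used; everything else rests on Proposition~\ref{prop:6} together with the separation and surjectivity facts.
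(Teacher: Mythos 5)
Your argument is correct, and its skeleton is the paper's: Proposition~\ref{prop:6} reduces everything to the polynomial part $W=\Pc\cap V$, graded by dilation weight; homogeneous distributions supported at the origin realize every linear functional on each finite-dimensional piece $\Pc_k$ (the paper phrases this as the identification of $\Pc_k'$ with a quotient of the space of weight-$k$ distributions); and translation invariance of $W$ is what converts vanishing of the origin pairing into vanishing of the whole convolution. Where you differ is only in how that last step is organized. The paper chooses, for each weight $k$, distributions whose origin pairing cuts out exactly $\Pc_k\cap V$ inside $\Pc_k$, and then notes that such a distribution annihilates \emph{all} of $\Pc\cap V$ at the origin for free (components of other weights die by homogeneity); hence $(P*I)(\zeta,z)=\langle \check I, P((\zeta,z)\,\cdot\,)\rangle=0$ follows in one line from the fact that translates of $P\in\Pc\cap V$ remain in $\Pc\cap V$, with no induction on weights, and Proposition~\ref{prop:6} applied to $V$ and to $V_{\Ic}$ finishes the proof. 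You instead take the maximal set $\Ic$ of homogeneous annihilators of $W$, prove by induction on the weight (via $\Nc$-translations, the constancy of $\Nc$-invariant functions holomorphic in $z$, and homogeneity) that annihilation of $W_{d_0}$ propagates to all of $W$, and then use finite-dimensional duality to detect any homogeneous polynomial outside $W$. This buys a canonical choice of $\Ic$ and isolates the propagation phenomenon as a lemma, at the cost of an induction that the paper's bookkeeping renders unnecessary; both routes rest on the same duality between $\Pc_d$ and origin-supported homogeneous distributions and on the same two applications of Proposition~\ref{prop:6}.
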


\begin{proof}
	Observe that, if $\Ic$ is a set of homogeneous distributions on $\Nc$, supported in $\Set{(0,0)}$, then $V_\Ic\coloneqq \Set{f\in \Hol(D)\colon f*\Ic=0}$ is a closed $G_\delta$-invariant subspace of $\Hol(D)$. By Proposition~\ref{prop:6}, we only need to show that $\Pc\cap V=\Pc\cap V_\Ic $ for a suitable $\Ic$.
	
	For every $k\in \N$, denote by $\Pc_k$ the space of holomorphic polynomials on $E\times F_\C$ which are homogeneous of degree $k$ with respect to the dilations $R\cdot (\zeta,z)=(R\zeta, R^2 z)$, and observe that $\Pc\cap V=\bigoplus_{k} \Pc_k\cap V $ since $\Pc\cap V$ is dilation-invariant. 
	Observe that, if $\Ic_k$ denotes the space of homogeneous distributions on $\Nc$ of degree $k$ supported in $\Set{(0,0)}$, then $\langle \Ic_k, \Pc_{k'}\rangle=0$ if $k\neq k'$, by homogeneity, and that the canonical pairing between $\Pc_k$ and $\Ic_k$ induces an isomorphism of $\Pc_k'$ with a quotient of $\Ic_k$ (with $\Ic_k$, if $n=0$). 
	Consequently, for every $k\in \N$, we may find a subset $\Ic'_k$ of $\Ic_k$ such that $\Pc_k\cap V=\Pc_k\cap \Ic_k'^\circ $. 
	Hence, $\Pc\cap V=\bigcap_{k\in \N} \Ic_k'^\circ=(\bigcup_{k\in \N} \Ic_k')^\circ$. 
	Then, set $\Ic\coloneqq \bigcup_{k\in\N} \check\Ic'_k$, where $\check\,$ denotes the action of the inversion $(\zeta,x)\mapsto (\zeta,x)^{-1}=(-\zeta,-x)$, and let us prove that $\Pc\cap V=\Pc\cap V_{\Ic'}$. 
	Take $P\in \Pc\cap V$, and observe that, since $\Pc\cap V$ is invariant under the action of $\Nc$, also $P((\zeta,z)\,\cdot\,)\in \Pc\cap V$ for every $(\zeta,z)\in E\times F_\C$, with the notation of the proof of Proposition~\ref{prop:6} (cf.~the proof of Proposition~\ref{prop:6} or argue directly using Taylor's formula), so that 
	\[
	(P*I)(\zeta,z)=\langle \check I, P((\zeta,z)\,\cdot\,)\rangle=0
	\]
	for every $I\in \Ic$, so that $P\in V_\Ic$ by the arbitrariness of $(\zeta,z)\in D$ and $I\in \Ic$. Conversely, take $P\in V_\Ic$. Then, for every $I\in \Ic$,
	\[
	\langle \check I, P\rangle=(P*I)(0,0)=0
	\]
	by continuity (or holomorphy), 	so that $P\in \Pc\cap V$.
\end{proof}

\end{document}